\theoremstyle{plain}
\newtheorem{thm}{Theorem}[section]
\newtheorem{lemma}[thm]{Lemma}
\newtheorem{prop}[thm]{Proposition}
\theoremstyle{definition}
\newtheorem{defn}[thm]{Definition}
\newtheorem{remark}[thm]{Remark}
\numberwithin{equation}{section}
\title{The First Moment of $L(\frac{1}{2},\chi)$ for Real Quadratic Function Fields}
\author{J.C. Andrade \\J. MacMillan}
\date{}
\newcommand\blfootnote[1]{%
  \begingroup
  \renewcommand\thefootnote{}\footnote{#1}%
  \addtocounter{footnote}{-1}%
  \endgroup
}
\begin{document}
\maketitle
\blfootnote{2010 Mathematics Subject Classification: Primary 11M38; Secondary 11M06, 11G20 \\Date: August 09, 2019}
\par\noindent
ABSTRACT: In this paper we use techniques first introduced by Florea to improve the asymptotic formula for the first moment of the quadratic Dirichlet L-functions over the rational function field, running over all monic, square-free polynomials of even degree at the central point. With some extra technical difficulties that doesn't appear in Florea's paper, we prove that there are extra main terms of size $(2g+2)q^{\frac{2g+2}{3}}, q^{\frac{g}{6}+\left[\frac{g}{2}\right]}$ and $q^{\frac{g}{6}+\left[\frac{g-1}{2}\right]}$, whilst bounding the error term by $q^{\frac{g}{2}(1+\epsilon)}$.

\section{Introduction}
An important and well-studied problem in analytic number theory is to understand the asymptotic behaviour of moments of families of L-functions. Considering the family of Dirichlet L-functions, $L(s,\chi_d)$, with $\chi_d$ a real primitive Dirichlet character modulo $d$ defined by the Jacobi symbol $\chi_d(n)=\left(\frac{d}{n}\right)$, a problem is to understand the asymptotic behaviour of 
\begin{equation}\label{eq:1.1}
    \sum_{0<d\leq D}L\left(s,\chi_d\right)^k,
\end{equation}
summing over fundamental discriminants $d$, as $D\rightarrow\infty$. In this context, Jutila, \cite{Jutila1981}, proved, when $s=\frac{1}{2}$, that 
\begin{equation}\label{eq:1.2}
    \sum_{0<d\leq D}L\left(\frac{1}{2},\chi_d\right)=\frac{P(1)}{4\zeta(2)}\left[\log\left(\frac{D}{\pi}\right)+\frac{\Gamma'}{\Gamma}\left(\frac{1}{4}\right)+4\gamma-1 +4\frac{P'}{P}(1)\right]+O(D^{\frac{3}{4}+\epsilon}),
\end{equation}
where $\epsilon>0$ and 
\begin{equation*}
    P(s)=\prod_P\left(1-\frac{1}{p^s(p+1)}\right).
\end{equation*}
Goldfeld and Hoffstein, \cite{Goldfeld1985}, improved the error term to $D^{\frac{19}{32}+\epsilon}$. Young, \cite{Young2009THEL-FUNCTIONS}, showed that the error term is bounded by $D^{\frac{1}{2}+\epsilon}$ when considering the smoothed first moment. Jutila, \cite{Jutila1981}, computed the second moment and Soundararajan, \cite{Soundararajan2000NonvanishingS=1/2}, computed the second and third moments, when averaging over real, primitive, even characters with conductors $8d$. It is conjectured that 
\begin{equation*}
    \sum_{0<d\leq D}L\left(\frac{1}{2},\chi_d\right)^k\sim C_kD(\log D)^{\frac{k(k+1)}{2}},
\end{equation*}
where the sum is over fundamental discriminants. Keating and Snaith, \cite{Keating2000Random1/2}, conjectured a precise value for $C_k$ and Conrey \textit{et al}, \cite{Conrey2005IntegralL-functions}, conjectured the integral moments and formulas for the principal lower order terms. \\
\par\noindent
In the function field setting, the analogue problem is to understand the asymptotic behaviour of 
\begin{equation}\label{eq:1.3}
    \sum_{D\in\mathcal{H}_{2g+1}}L(s,\chi_D)^k
\end{equation}
as $|D|=q^{\text{deg}(D)}\rightarrow\infty$, where $\mathcal{H}_{2g+1}$ denotes the space of monic, square-free polynomials of degree $2g+1$ over $\mathbb{F}_q[x]$, which corresponds to the imaginary quadratic function field, and $L(s,\chi_D)$ denotes the quadratic Dirichlet L-function for the rational function field. Since we are letting $|D|\rightarrow\infty$, there are two limits to consider. The first is to fix $g$ and let $q\rightarrow\infty$ and the second is to fix $q$ and let $g\rightarrow \infty$. Katz and Sarnak, \cite{Katz1999,Katz1999ZerosSymmetry} used equidistribution results to relate the $q$ limit of (\ref{eq:1.3}) to a random matrix theory integral, which was then computed by Keating and Snaith, \cite{Keating2000Random1/2}. Therefore we will concentrate on the other limit, namely when $q$ is fixed and we let $g\rightarrow\infty$. In this context, Andrade and Keating, \cite{Andrade2012}, computed the first moment of (\ref{eq:1.3}), when $s=\frac{1}{2}$, which is considered to be the function field analogue of Jutila's result (\ref{eq:1.2}). In particular they proved the following result. 
\begin{thm}[Andrade and Keating]
Let $q$ be the fixed cardinality of the ground field and assume for simplicity that $q\equiv 1 (\text{mod }4)$. Then 
\begin{equation}\label{eq:1.4}
    \sum_{D\in\mathcal{H}_{2g+1}}L\left(\frac{1}{2},\chi_D\right)=\frac{P(1)}{2\zeta_{\mathbb{A}}(2)}|D|\left[\log_q|D|+1+\frac{4}{\log q}\frac{P'}{P}(1)\right]+O(|D|^{\frac{3}{4}+\frac{\log_q2}{2}}),
\end{equation}
where 
\begin{equation*}
    P(s)=\prod_P\left(1-\frac{1}{|P|^s(|P|+1)}\right)
\end{equation*}
and $\zeta_{\mathbb{A}}$ denotes the zeta function associated with $\mathbb{A}=\mathbb{F}_q[x]$. 
\end{thm}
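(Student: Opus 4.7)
The plan is to follow the function-field adaptation of the standard first-moment argument. Since $L(s,\chi_D)$ is a polynomial in $q^{-s}$ of degree $2g$ for $D\in\mathcal{H}_{2g+1}$, its functional equation yields an exact ``approximate functional equation'' expressing the central value as a finite Dirichlet polynomial of length essentially $q^{g}$, of the shape
\begin{equation*}
    L\!\left(\tfrac{1}{2},\chi_D\right) = \sum_{\substack{f\text{ monic}\\ \deg f\leq g}}\frac{\chi_D(f)}{|f|^{1/2}} + (\text{dual piece arising from the functional equation}).
\end{equation*}
Substituting this into the first moment and interchanging the order of summation reduces the task to analysing
\begin{equation*}
    \sum_{\deg f\leq g}\frac{1}{|f|^{1/2}}\sum_{D\in\mathcal{H}_{2g+1}}\chi_D(f),
\end{equation*}
and splitting the outer sum into contributions from square $f$ and non-square $f$.

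The first step is to extract the main terms from the square $f=l^{2}$. In this case $\chi_D(l^{2})$ is the indicator that $(D,l)=1$, so the inner sum counts squarefree monic polynomials of degree $2g+1$ coprime to $l$. Using the standard formula
\begin{equation*}
    \sum_{\substack{D\in\mathcal{H}_{2g+1}\\ (D,l)=1}}1 = \frac{q^{2g+1}}{\zeta_{\mathbb{A}}(2)}\prod_{P\mid l}\frac{|P|}{|P|+1}+O\!\left(q^{g+\frac{\deg l}{2}}\right),
\end{equation*}
the resulting sum over $l$ with $\deg l\leq g/2$ can be recast as a contour integral via Perron's formula. The relevant generating Dirichlet series has a double pole at $s=1$ coming from the zeta factor and the Euler product defining $P(s)$, and a careful residue calculation produces the leading $|D|\log_q|D|$ term together with the constants $1$ and $\tfrac{4}{\log q}P'(1)/P(1)$, multiplied by the overall factor $P(1)/(2\zeta_{\mathbb{A}}(2))$.

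The second step is to bound the non-square contribution. For non-square $f$ I would invoke a Poisson-type summation formula for quadratic characters modulo polynomials over $\mathbb{F}_q[x]$, converting $\sum_{D\in\mathcal{H}_{2g+1}}\chi_D(f)$ into a dual sum of Gauss-type sums whose individual terms are controlled by Weil's bound. After summing over non-square monic $f$ of degree $\leq g$ weighted by $|f|^{-1/2}$, the total non-square contribution is of order $|D|^{3/4+(\log_q 2)/2}$, which produces the stated error exponent.

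The main obstacle will be the simultaneous precise evaluation of all residue contributions in the main-term contour integral: the $\log_q|D|$ and the constants in the bracket arise from the interaction of the double pole at $s=1$, the logarithmic derivative $P'/P(1)$, and the truncation at degree $\leq g$ coming from the functional equation, and tracking cancellations between the truncation error and the residue expansion to pin down the explicit constants with the correct coefficients $1$ and $4/\log q$ is the delicate part. By contrast, the Poisson-summation estimate for non-square $f$ is technically involved but essentially routine once the Gauss-sum bounds on $\mathbb{F}_q[x]$ are in hand, and it is this step that determines the size of the final error term.
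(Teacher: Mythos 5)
This theorem is not proved in the present paper: it is quoted from Andrade and Keating \cite{Andrade2012}, and the body of the paper instead proves the analogous (and sharper) statement for $\mathcal{H}_{2g+2}$ by Florea's method. Measured against the actual Andrade--Keating argument, your main-term extraction is essentially correct: the approximate functional equation, the swap of summation, the count of squarefree $D$ coprime to $l$ for $f=l^2$, and the Perron/residue computation at the double pole are exactly how the bracket $\log_q|D|+1+\tfrac{4}{\log q}\tfrac{P'}{P}(1)$ arises. One omission there: the ``dual piece'' of the functional equation (the second sum, over $\deg f\leq g-1$) is not an afterthought --- it contributes roughly half of the $\log_q|D|$ coefficient, since each of the two sums over square $f=l^2$ with $\deg l\leq[g/2]$, resp.\ $\deg l\leq[(g-1)/2]$, produces about $\tfrac{g}{2}P(1)\,q^{2g+1}/\zeta_{\mathbb{A}}(2)$. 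If you only evaluate the principal sum you land on half the main term.

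The genuine gap is in the non-square contribution. The error exponent $\tfrac{3}{4}+\tfrac{\log_q2}{2}$ is the fingerprint of the elementary argument: one removes squarefreeness by M\"obius, writes $\sum_{B\in\mathbb{A}^+_m}\chi_f(B)$ as an elementary symmetric function of the inverse zeros of $\mathcal{L}(u,\chi_f)$, and applies Weil's Riemann Hypothesis to get $\bigl|\sum_{B\in\mathbb{A}^+_m}\chi_f(B)\bigr|\leq\binom{\deg f-1}{m}q^{m/2}$; the binomial coefficient is where the $2^{\deg f}$, hence the $\log_q2$ in the exponent, comes from. The mechanism you describe --- Poisson summation converting the $D$-sum into Gauss sums --- is Florea's later refinement (the one this paper adapts), and it does not yield an error of that shape: (i) Poisson summation cannot be applied directly to a sum over squarefree $D$, so you must first pass to sums over all monic $h$ via the $C\mid f^{\infty}$ identity (Lemma 2.5 here); and (ii) once you do, the square dual frequencies $V=l^2$ are not error terms but produce a genuine secondary main term of size $\asymp g\,q^{(2g+1)/3}$, while the non-square $V$ give $O(q^{\frac{g}{2}(1+\epsilon)})$. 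Both happen to be $O(|D|^{3/4})$, so the stated theorem would still follow, but your write-up neither derives the claimed bound by the route you name nor accounts for the square-$V$ terms that route necessarily generates. You should either commit to the elementary RH bound (and then drop the mention of Poisson and Gauss sums), or commit to the Poisson route and carry out the square-$V$ analysis before absorbing it into the error.
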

\par\noindent
Andrade and Keating, \cite{Andrade2014}, also conjectured asymptotic formulas for higher and integral moments for (\ref{eq:1.3}) which is considered to be the function field analogue of Keating and Snaith's result, \cite{Keating2000Random1/2} and Conrey \textit{et al}, \cite{Conrey2005IntegralL-functions}. Rubinstein and Wu, \cite{Rubinstein2015}, provided numerical evidence for the conjecture given by Andrade and Keating, \cite{Andrade2014}. They numerically computed the moments for $k\leq 10$ and $d\leq 18$, where $d=2g+1$, for various values of $q$ and compared them to the conjectured formulas. \\
\par\noindent
Using a similar method to Young's, \cite{Young2009THEL-FUNCTIONS}, in the number field case, Florea, \cite{Florea2017}, improved the asymptotic formula obtained by Andrade and Keating (\ref{eq:1.4}) by obtaining a secondary main term of size $gq^{\frac{2g+1}{3}}$ and bounding the error term by $q^{\frac{g}{2}(1+\epsilon)}$. 
\begin{thm}[Florea]
    Let $q$ be a prime with $q\equiv 1(\text{mod }4)$. Then
    \begin{equation}
        \sum_{D\in\mathcal{H}_{2g+1}}L\left(\frac{1}{2},\chi_D\right)=\frac{P(1)}{2\zeta_{\mathbb{A}}(2)}q^{2g+1}\left[(2g+1)+1+\frac{4}{\log q}\frac{P'}{P}(1)\right]+q^{\frac{2g+1}{3}}R(2g+1)+O(q^{\frac{g}{2}(1+\epsilon)}),
    \end{equation}
where $R$ is a polynomial of degree 1 that can be explicitly calculated. 
\end{thm}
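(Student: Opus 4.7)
The plan is to sharpen the argument yielding (\ref{eq:1.4}) by using the function-field analogue of Young's smoothed-moment method. Starting from the approximate functional equation for $L(\frac{1}{2},\chi_D)$ when $D\in\mathcal{H}_{2g+1}$, one would express
\[
L\left(\tfrac{1}{2},\chi_D\right) \;=\; \sum_{\substack{f \text{ monic}\\ \deg f \le g}} \frac{\chi_D(f)}{|f|^{1/2}} \;+\; \sum_{\substack{f \text{ monic}\\ \deg f \le g-1}} \frac{\chi_D(f)}{|f|^{1/2}},
\]
and then interchange the order of summation so that the moment is reduced to the inner character sums $S(f):=\sum_{D\in\mathcal{H}_{2g+1}}\chi_D(f)$. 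These sums split naturally according to whether $f$ is a perfect square in $\mathbb{F}_q[x]$.

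When $f=\ell^{2}$, the sum $S(\ell^{2})$ is essentially the number of square-free polynomials $D$ of degree $2g+1$ coprime to $\ell$. A standard Möbius sieve, combined with manipulation of the resulting zeta-like generating series in the variable $u=q^{-s}$, recovers the Andrade--Keating main term already present in (\ref{eq:1.4}), up to an error absorbable in $O(q^{g(1+\epsilon)/2})$.

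The genuinely new contribution comes from the non-square $f$. The key tool is Florea's function-field Poisson summation formula for square-free $D$, which rewrites $S(f)$ as a much shorter ``dual'' sum, of length roughly $\deg f$, over polynomials $V$ weighted by Gauss sums $G(V,f)$. Re-sorting the dual sum according to whether $V$ is itself a square, the square-$V$ piece is the source of the secondary main term: its generating Dirichlet series carries a pole whose residue, after a contour shift and an explicit evaluation, produces a contribution of the shape $q^{(2g+1)/3}R(2g+1)$ with $R$ a degree-one polynomial that can be read off from the residue. The non-square-$V$ piece is controlled via the Weil bound for Gauss sums and bounds on mean squares of short character sums, yielding the error of size $q^{g(1+\epsilon)/2}$.

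The main obstacle will be the clean identification of this secondary main term across the Poisson step. Tracking every factor of $|f|^{1/2}$, $q^{\deg V}$ and the Gauss sums is combinatorially delicate, and the residue computation determining the coefficients of $R$ requires careful bookkeeping of the Euler-product constants near $s=1$—in particular keeping the factors $P(1)$ and $\zeta_{\mathbb{A}}(2)$ accurate through the contour shift. A secondary difficulty is to calibrate the cutoff in the approximate functional equation against the length of the dual sum so that the balance point falls exactly at $q^{(2g+1)/3}$ and the final error does not exceed $q^{g(1+\epsilon)/2}$.
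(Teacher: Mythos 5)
This statement is Florea's theorem, which the paper imports from \cite{Florea2017} rather than proving; the closest in-house argument is the proof of Theorem 1.4, which runs the same machinery for $\mathcal{H}_{2g+2}$. Your architecture --- approximate functional equation, reduction to $S(f)=\sum_{D}\chi_D(f)$, a direct sieve for square $f$, then Poisson summation with a split on $V$ square versus non-square --- is indeed the right one and matches Florea's.

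There is, however, a genuine quantitative gap at the step where you claim the square-$f$ (diagonal) contribution ``recovers the Andrade--Keating main term \dots up to an error absorbable in $O(q^{\frac{g}{2}(1+\epsilon)})$''. It does not. The diagonal contribution is essentially $\frac{q^{2g+1}}{\zeta_{\mathbb{A}}(2)}\sum_{d(\ell)\le[g/2]}\frac{1}{|\ell|}\prod_{P\mid \ell}\frac{|P|}{|P|+1}$, and extracting the closed-form main term via Perron's formula means picking up the double pole at $u=q^{-1}$ of an integrand of the shape $\mathcal{C}(u)/\bigl(u(1-qu)^2(qu)^{[g/2]}\bigr)$; since $\mathcal{C}$ is analytic only in $|u|<q$ (it has singularities on $|u|=q$), the leftover contour integral is of genuine size about $q^{2g+1}\cdot q^{-2[g/2]}\asymp q^{g(1+\epsilon)}$. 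An unaccounted term of that size swamps the secondary main term $q^{(2g+1)/3}R(2g+1)$ you are trying to isolate, so the plan as written cannot reach the stated conclusion. The missing idea --- the heart of Florea's proof and of the paper's Sections 4, 5.3 and 7 --- is that the square-$V$ part of the dual sum is \emph{not} solely the source of the secondary term: after packaging it into the two-variable generating function $\mathcal{B}(z,w)$ and shifting contours, the residue at $w=q^{-1}$ produces contour integrals (the analogues of $\mathcal{S}_1(V=\square),\dots,\mathcal{S}_4(V=\square)$ in Proposition 5.1) over a circle $|u|=R>1$ which pair with the diagonal integrals (the analogues of $M_{k,\ell}$ in Proposition 4.1) over $|u|=r<q^{-1}$; the two contours sandwich $u=q^{-1}$, so their sum is \emph{exactly} the residue there, i.e.\ exactly the stated principal main term with no error at all, and only the separate residue at $w=qz$ yields $q^{(2g+1)/3}R(2g+1)$. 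Without this recombination the error analysis fails at the very first main-term step. Two smaller points: Poisson summation applies to complete sums over $\mathbb{A}^+_m$, so you must first remove the square-free condition on $D$ via quadratic reciprocity and the $C\mid f^{\infty}$ expansion of Lemma 2.5 before invoking it; and the dual-sum analysis must be split by the parity of $d(f)$, since the two cases of Lemma 2.6 place the square-$V$ and $V=0$ terms differently.
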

\par\noindent
Using a similar method, Florea, \cite{Florea2017a,Florea2017c}, computed the second, third and fourth moments of (\ref{eq:1.3}) at $s=\frac{1}{2}$ and showed that these asymptotic formulas agree with the formulas conjectured by Andrade and Keating in \cite{Andrade2014}.\\
\par\noindent
In \cite{Andrade2012a}, Andrade obtained an asymptotic formula for the first moment of $(\ref{eq:1.3})$ at $s=1$. In particular, he proved that
\begin{equation}\label{eq:1.6}
    \sum_{D\in\mathcal{H}_{2g+1}}L(1,\chi_D)=|D|P(2)+O((2q)^g). 
\end{equation}
Using the techniques presented by Florea, Andrade and Jung, \cite{Andrade2018}, improved the asymptotic formula, (\ref{eq:1.6}), by obtaining a secondary main term of size $q^{\frac{g}{3}}$ and bounding the error term by $q^{g\epsilon}$, for any $\epsilon>0$. In particular, they proved that
\begin{equation}
    \sum_{D\in\mathcal{H}_{2g+1}}L(1,\chi_D)=P(2)q^{2g+1}+c_1q^{\frac{g}{3}}+O(q^{g\epsilon}),
\end{equation}
where $c_1$ is a constant that can be explicitly calculated. \\
\par\noindent
In a recent paper, Bae and Jung, \cite{Bae2019}, improved the asymptotic formula for the second derivative of (\ref{eq:1.3}) at $s=\frac{1}{2}$, that was obtained by Andrade and  Rajagopal, \cite{Andrade2016MeanI}, using the techniques presented by Florea. In particular, compared to the asymptotic formula obtained by Andrade and  Rajagopal, Bae and Jung were able to obtain a secondary main term of size $g^3q^{\frac{2g+1}{3}}$ whilst also bounding the error term by $q^{\frac{g}{2}(1+\epsilon)}.$\\
\par\noindent
Another problem in function fields is to understand the aymptotic behaviour of 
\begin{equation}\label{eq:1.8}
    \sum_{D\in\mathcal{H}_{2g+2}}L(s,\chi_D)^k,
\end{equation}
as $|D|\rightarrow\infty$, where $\mathcal{H}_{2g+2}$ denotes the space of monic, square-free polynomials of degree $2g+2$, which corresponds to the real quadratic function field. In particular we concentrate on when $q$ is fixed and letting $g\rightarrow\infty$. In this context, Jung, \cite{Jung2013NoteEnsemble}, obtained an asymptotic formula for the first moment of (\ref{eq:1.8}) at $s=\frac{1}{2}$. 
\begin{thm}[Jung]
    Assume that $q$ is odd and greater than 3. Then we have
    \begin{equation}\label{eq:1.9}
        \sum_{D\in\mathcal{H}_{2g+2}}L\left(\frac{1}{2},\chi_D\right)=\frac{P(1)}{2\zeta_{\mathbb{A}}(2)}|D|\left[\log_q|D|+\frac{4}{\log q}\frac{P'}{P}(1)+2\zeta_{\mathbb{A}}\left(\frac{1}{2}\right)\right]+O(|D|^{\frac{3}{4}+\frac{\log_q2}{2}}).
    \end{equation}
\end{thm}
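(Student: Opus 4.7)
The plan is to mirror Andrade--Keating's proof of Theorem~1.1 for the imaginary quadratic family, with modifications to handle the real quadratic family $\mathcal{H}_{2g+2}$; the extra $2\zeta_{\mathbb{A}}(1/2)$ constant (absent in the odd-degree case) will arise from an asymmetry in the approximate functional equation. Recall that for $D\in\mathcal{H}_{2g+2}$, $L(s,\chi_D)$ is a polynomial in $u=q^{-s}$ of degree $2g+1$ satisfying a functional equation relating $s\leftrightarrow 1-s$ with sign $+1$. Applying an approximate functional equation at $s=\tfrac{1}{2}$ produces, with the usual endpoint conventions,
\begin{equation*}
L\!\left(\tfrac{1}{2},\chi_D\right)=\sum_{\deg f\leq g+1}\frac{\chi_D(f)}{|f|^{1/2}}+\sum_{\deg f\leq g}\frac{\chi_D(f)}{|f|^{1/2}},
\end{equation*}
with the $g$ versus $g+1$ asymmetry (as opposed to $g$ versus $g$ in $\mathcal{H}_{2g+1}$) being a direct consequence of $\deg D$ being even. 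Swapping the order of summation gives
\begin{equation*}
\sum_{D\in\mathcal{H}_{2g+2}}L\!\left(\tfrac{1}{2},\chi_D\right)=\sum_{f\text{ monic}}\frac{1}{|f|^{1/2}}\sum_{D\in\mathcal{H}_{2g+2}}\chi_D(f),
\end{equation*}
and the inner sum is split according to whether $f$ is a perfect square.

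For the diagonal $f=\ell^{2}$, the identity $\chi_D(\ell^{2})=\mathbf{1}_{(\ell,D)=1}$ reduces the inner sum to counting squarefree $D\in\mathcal{H}_{2g+2}$ coprime to $\ell$. The generating identity
\begin{equation*}
\sum_{\substack{D\text{ monic sqfree}\\(D,\ell)=1}}u^{\deg D}=\frac{1-qu^{2}}{1-qu}\prod_{P\mid\ell}\frac{1}{1+u^{\deg P}}
\end{equation*}
yields, after extracting the coefficient of $u^{2g+2}$ and summing against $1/|\ell|$ over monic $\ell$, the main term stated in the theorem. Splitting the $\ell$-sum at $\deg\ell=g$ and separately analysing the asymmetric range $\deg\ell=g+1$ recovers the three bracket terms: $\log_q|D|$ from the length of the $\ell$-sum, $\frac{4}{\log q}\frac{P'}{P}(1)$ from differentiating the Euler product $P(s)$ at $s=1$, and $2\zeta_{\mathbb{A}}(\tfrac{1}{2})$ from the extra asymmetric endpoint. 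For the off-diagonal ($f$ not a square), function-field quadratic reciprocity swaps the roles of $D$ and $f$ inside $\chi_D(f)$, after which the Weil bound for the resulting character sums, combined with a P\'olya--Vinogradov-type step, delivers the error $O(|D|^{3/4+\log_q 2/2})$.

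\textbf{Main obstacle.} The delicate step is isolating the constant $2\zeta_{\mathbb{A}}(1/2)$: the boundary range $\deg f=g+1$, harmless in the odd-degree case by symmetry, here produces a genuine secondary constant that must be tracked through the Euler-product manipulations and then identified with $\zeta_{\mathbb{A}}(1/2)=(1-\sqrt{q})^{-1}$. A contour-integral representation of $L(\tfrac{1}{2},\chi_D)$, deformed through the functional equation, would streamline this bookkeeping, with $\zeta_{\mathbb{A}}(1/2)$ emerging naturally as a residue picked up at the shifted contour.
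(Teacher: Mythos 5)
The paper does not prove this statement itself --- it is quoted from Jung \cite{Jung2013NoteEnsemble} --- but the one ingredient it does record, the approximate functional equation of Lemma 2.4, is exactly where your argument breaks. For $D\in\mathcal{H}_{2g+2}$ the correct expansion is
\begin{equation*}
L\left(\tfrac{1}{2},\chi_D\right)=\sum_{n=0}^{g}q^{-n/2}\sum_{f\in\mathbb{A}^+_n}\chi_D(f)+\sum_{n=0}^{g-1}q^{-n/2}\sum_{f\in\mathbb{A}^+_n}\chi_D(f)-q^{-\frac{g+1}{2}}\sum_{f\in\mathbb{A}^+_{\leq g}}\chi_D(f)-q^{-\frac{g}{2}}\sum_{f\in\mathbb{A}^+_{\leq g-1}}\chi_D(f):
\end{equation*}
the principal sums run to $g$ and $g-1$ (not $g+1$ and $g$), and there are two additional unweighted correction sums coming from the trivial zero at $u=1$ (one divides out $(1-u)$, applies the functional equation to $\mathcal{L}^*$, which has degree $2g$, and multiplies back by $(1-q^{-1/2})$, which produces these boundary terms). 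Your proposed identity $L(\tfrac{1}{2},\chi_D)=\sum_{\deg f\leq g+1}\chi_D(f)|f|^{-1/2}+\sum_{\deg f\leq g}\chi_D(f)|f|^{-1/2}$ already fails for a square-free $D$ of degree $2$ (i.e.\ $g=0$): there $\mathcal{L}(u,\chi_D)=1-u$, so $L(\tfrac{1}{2},\chi_D)=1-q^{-1/2}$, whereas your right-hand side evaluates to $2-q^{-1/2}$.

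This is not a cosmetic slip, because the constant $2\zeta_{\mathbb{A}}(\tfrac{1}{2})$ you must produce lives precisely in the two correction sums you have dropped. Their diagonal ($f$ a perfect square) contributions are the quantities the paper calls $M_{g,2}$ and $M_{g-1,2}$ in Section 3: they carry the prefactors $q^{-\frac{g+1}{2}}$ and $q^{-\frac{g}{2}}$ against main terms of size $q^{2g+2}$ times $q^{\left[\frac{g}{2}\right]}$-type powers, and via the identity $1+q^{1/2}=q^{-\frac{g-1}{2}+\left[\frac{g}{2}\right]}+q^{-\frac{g}{2}+\left[\frac{g-1}{2}\right]+1}$ they assemble into $\frac{P(1)}{2\zeta_{\mathbb{A}}(2)}|D|\cdot 2\zeta_{\mathbb{A}}(\tfrac{1}{2})$. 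The mechanism you propose instead --- a single extra shell $\deg f=g+1$ --- cannot do this job: on the diagonal one needs $f=\ell^2$, so that shell is nonempty only when $g+1$ is even, and it would yield a contribution for one parity of $g$ and nothing for the other, rather than the uniform constant $2\zeta_{\mathbb{A}}(\tfrac{1}{2})$. The remainder of your outline (the square-free generating function $\frac{1-qu^2}{1-qu}\prod_{P\mid\ell}(1+u^{d(P)})^{-1}$ for the diagonal, quadratic reciprocity plus a Weil/P\'olya--Vinogradov bound for the off-diagonal) is the standard Andrade--Keating/Jung route and would go through once the functional equation, and hence the bookkeeping of where $2\zeta_{\mathbb{A}}(\tfrac{1}{2})$ comes from, is corrected.
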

\par\noindent
In this paper, we will use Florea's method to improve the asymptotic formula obtained by Jung (\ref{eq:1.9}). In particular we will obtain secondary main terms of size $gq^{\frac{2g+2}{3}}, q^{\frac{g}{6}+\left[\frac{g}{2}\right]}$ and $q^{\frac{g}{6}+\left[\frac{g-1}{2}\right]}$,  whilst also bounding the error term by $q^{\frac{g}{2}(1+\epsilon)}$. The main result of this paper is the following Theorem. 
\begin{thm}
    Let $q$ be a prime with $q\equiv 1(\text{mod }4)$. Then
   \begin{align}
       \sum_{D\in\mathcal{H}_{2g+2}}L\left(\frac{1}{2},\chi_D\right)&=\frac{P(1)}{2\zeta_{\mathbb{A}}(2)}q^{2g+2}\left[(2g+2)+\frac{4}{\log q}\frac{P'}{P}(1)+2\zeta_{\mathbb{A}}\left(\frac{1}{2}\right)\right]\nonumber\\
       &+q^{\frac{2g+2}{3}}\mathcal{R}(2g+2)+C_1q^{\frac{g}{6}+\left[\frac{g}{2}\right]}+C_2q^{\frac{g}{6}+\left[\frac{g-1}{2}\right]}+O(q^{\frac{g}{2}(1+\epsilon)}),
   \end{align}
   where $\mathcal{R}$ is a polynomial of degree 1 and $C_1$ and $C_2$ are constants that can explicitly be computed (see formulas (\ref{eq:5.35}), (\ref{eq:5.36}) and (\ref{eq:5.37})). 
\end{thm}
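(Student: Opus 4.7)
The plan is to adapt Florea's method from the imaginary case (degree $2g+1$) to the real case (degree $2g+2$). First I would write down an approximate functional equation for $L(\frac{1}{2},\chi_D)$ when $D \in \mathcal{H}_{2g+2}$. Because the sign and shape of the functional equation differ from the odd-degree situation, this produces two character sums of slightly different lengths, roughly of the form
\begin{equation*}
L\!\left(\tfrac{1}{2},\chi_D\right) = \sum_{\deg f \leq g+1} \frac{\chi_D(f)}{|f|^{1/2}} \; + \; \sum_{\deg f \leq g} \frac{\chi_D(f)}{|f|^{1/2}},
\end{equation*}
up to an overall normalization. After summing over $D \in \mathcal{H}_{2g+2}$ and interchanging the order of summation, I would split each inner sum according to whether $f$ is a perfect square or not, which is where the analysis begins.

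Next I would treat the square contributions. For these, $\chi_D(f)=\chi_D(\square)$ reduces to an indicator that $f$ is coprime to $D$, and evaluating $\sum_{D \in \mathcal{H}_{2g+2}} \chi_D(\square)$ via the standard $\mu^2(D) = \sum_{A^2 \mid D}\mu(A)$ identity, followed by manipulation of the Euler product against $P(s)$ and $\zeta_{\mathbb{A}}(s)$, should recover the leading main term and reproduce the $\zeta_{\mathbb{A}}(\frac{1}{2})$ contribution that distinguishes Jung's formula from the odd-degree analogue. Then, for the non-square contributions, I would use Poisson summation over $\mathbb{F}_q[x]$ (in the form developed for function fields) to dualize the character sum, producing a sum of Gauss-type sums $G(V,f)$ indexed by a dual variable $V$ of degree less than $\deg f$. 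Here the two different sum lengths from the approximate functional equation must be carried through in parallel, taking care that the parity of $\deg V$ interacts with the parity of $\deg f$.

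I would then split the dual sum according as $V=0$, $V$ is a nonzero square, or $V$ is a non-square. The $V=0$ piece, combined with the square case, should, after converting the resulting polynomial-length sums into contour integrals and shifting the contour past the poles of the relevant zeta factors, produce the secondary main term $q^{(2g+2)/3}\mathcal{R}(2g+2)$ exactly as in Florea's analysis. The genuinely new main terms $C_1 q^{g/6 + [g/2]}$ and $C_2 q^{g/6 + [(g-1)/2]}$ should emerge from residues that appear only because the approximate functional equation has two unequal-length sums: integer/parity floor functions then single out one or the other piece depending on $g \pmod 2$, giving the two separate contributions. Finally the non-square $V$ part must be controlled by the Weil bound for the resulting Gauss/Kloosterman sums, yielding the error $O(q^{\frac{g}{2}(1+\epsilon)})$.

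The main obstacle I expect is precisely the isolation of the two new lower-order main terms $C_1$ and $C_2$. In the odd-degree setting handled by Florea, only one sum appears in the functional equation, so the corresponding residue computation is clean; here the interference between the two dual sums, combined with the floor functions $[g/2]$ and $[(g-1)/2]$, requires a careful parity-by-parity bookkeeping of which poles contribute to which regime. Getting the explicit constants (referenced as (5.35)--(5.37)) right, while simultaneously keeping the error below $q^{g/2(1+\epsilon)}$, will be the delicate technical step.
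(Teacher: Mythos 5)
Your overall strategy is the paper's strategy (Florea's method: approximate functional equation, the Lemma~2.5 reduction, Poisson summation in the dual variable $V$, the split $V=0$ / $V$ a nonzero square / $V$ non-square, contour integrals and residues for the main and secondary terms, and square-root cancellation for the non-square $V$ error). However, there is a genuine gap at the very first step: the approximate functional equation you write down is wrong in a way that makes the rest of the outline unable to produce the stated theorem. For $D\in\mathcal{H}_{2g+2}$ the correct identity (Lemma~2.4) has \emph{four} pieces: two weighted sums $\sum_{n\le g}\sum_{f\in\mathbb{A}^+_n}\chi_D(f)q^{-n/2}$ and $\sum_{n\le g-1}\sum_{f\in\mathbb{A}^+_n}\chi_D(f)q^{-n/2}$, together with two \emph{unweighted} sums $-q^{-(g+1)/2}\sum_{n\le g}\sum_f\chi_D(f)$ and $-q^{-g/2}\sum_{n\le g-1}\sum_f\chi_D(f)$ (and the lengths are $g$ and $g-1$, not $g+1$ and $g$). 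The two unweighted sums are not a ``normalization'': they are precisely the source of the $2\zeta_{\mathbb{A}}(\frac12)$ in the principal main term ($M_{g,2}$ and $M_{g-1,2}$ in the paper) and, more importantly, of the two new secondary terms $C_1q^{\frac g6+\left[\frac g2\right]}$ and $C_2q^{\frac g6+\left[\frac{g-1}2\right]}$. In the double contour integral the weighted sums carry a factor $(q^2zw^2)^{-[\cdot]}$ and the residue at $w=qz$ meets a \emph{double} pole at $z=q^{-4/3}$, giving $q^{\frac{2g+2}{3}}\mathcal{R}(2g+2)$; the unweighted sums carry $(qzw^2)^{-[\cdot]}$ instead, the pole at $z=q^{-4/3}$ is then \emph{simple}, and its residue is what produces $q^{\frac g6+[\cdot]}$. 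So the mechanism for $C_1,C_2$ is the weighted/unweighted dichotomy, not (as you suggest) merely the presence of two unequal lengths; the two lengths only account for the $\left[\frac g2\right]$ versus $\left[\frac{g-1}2\right]$ split. With the truncated functional equation you start from, you would obtain neither the $\zeta_{\mathbb{A}}(\frac12)$ term nor the $C_i$ terms.

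Two smaller points. First, your attribution of $q^{\frac{2g+2}{3}}\mathcal{R}(2g+2)$ to ``the $V=0$ piece, combined with the square case'' is off: $V=0$ forces $f$ to be a square and yields only the principal main term; all of the secondary terms come from $V=l^2$ a \emph{nonzero} square, via the pole at $w=qz$ of the generating function $\mathcal{B}(z,w)$ (the $\mathcal{Z}(qw^2z)$ factor). Second, the error term for non-square $V$ is not obtained by applying the Weil bound to individual Gauss sums $G(V,\chi_f)$ — that only gives $|G(V,\chi_f)|\le\sqrt{|f|}$ and a trivial sum over $f$ loses the needed factor $q^{g/2}$. What is required is square-root cancellation in the sum over $f\in\mathbb{A}^+_n$ for fixed non-square $V$ (the bound $|\delta_{V;n}(u)|\ll q^{\frac n2(1+\epsilon)}$), which comes from Weil's Riemann Hypothesis applied to the dual $L$-function attached to the square-free part of $V$. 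Finally, be aware that the recombination step you correctly flag as delicate is genuinely heavy in the paper: after the residue computations one must verify that a nine-term combination of the $u=0$, $u=q^{-1}$, $u=q^{-2}$ residues cancels identically, which the paper does by a parity-split induction on $g$ in the appendix, using the identity $1+q^{1/2}=q^{-\frac{g-1}{2}+\left[\frac g2\right]}+q^{-\frac g2+\left[\frac{g-1}{2}\right]+1}$.
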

\par\noindent
The calculations in this paper will follow the techniques presented in Florea, \cite{Florea2017}. However we encounter extra difficulties and extra terms are present when computing the first moment of $L\left(\frac{1}{2},\chi_D\right)$ for real quadratic function fields, compared to the calculations of the first moment of $L\left(\frac{1}{2},\chi_D\right)$ for imaginary function fields.
\section{Preliminaries and Background}
We first introduce the notation which will be used throughout the article and then provide some background information on Dirichlet L-functions in function fields. We denote $\mathbb{A}^+$ to be the set of all monic polynomials in $\mathbb{F}_q[x]$ and we denote $\mathbb{A}^+_n$ and $\mathbb{A}^+_{\leq n}$ to be the set of all monic polynomials of degree $n$ and degree at most $n$ in $\mathbb{F}_q[x]$ respectively. Let $\mathcal{H}_n$ denote the space of monic, square-free monic polynomials over $\mathbb{F}_q[x]$ of degree $n$. For a polynomial $f\in\mathbb{F}_q[x]$, we denote its degree by $d(f)$ and  its norm by $|f|=q^{d(f)}$. The letter $P$ denotes a monic, irreducible polynomial over $\mathbb{F}_q[x]$.
\subsection{Preliminaries on Dirichlet characters and Dirichlet L-functions for function fields}
Most of the facts in this section are proved in \cite{Rosen2002}. For $\Re(s)>1$, the zeta function of $\mathbb{A}=\mathbb{F}_q[x]$, denoted by $\zeta_{\mathbb{A}}(s)$ is defined by the infinite series
\begin{equation}
    \zeta_{\mathbb{A}}(s):=\sum_{f\in\mathbb{A}^+}\frac{1}{|f|^s}=\prod_P(1-|P|^{-s})^{-1}.
\end{equation}
There are $q^n$ monic polynomials of degree $n$, therefore we have
\begin{equation*}
    \zeta_{\mathbb{A}}(s)=(1-q^{1-s})^{-1}.
\end{equation*}
We will make use of the change of variables $u=q^{-s}$, so that we write $\mathcal{Z}(u)=\zeta_{\mathbb{A}}(s)$ and thus $\mathcal{Z}(u)=(1-qu)^{-1}$. \\
\par\noindent
Assume that $q$ is odd with $q\equiv 1(\text{mod }4)$. For $P$ a monic irreducible polynomial, the quadratic residue symbol $\left(\frac{f}{P}\right)\in\{\pm 1\}$ is defined by 
\begin{equation*}
    \left(\frac{f}{P}\right)\equiv f^{\frac{|P|-1}{2}}\hspace{1cm} (\text{mod }P) 
\end{equation*}
for $f$ coprime to $P$. If $P|f$, then $\left(\frac{f}{P}\right)=0$. We can also define the Jacobi symbol for arbitrary monic $Q$. Let $f$ be coprime to $Q$ and $Q=P_1^{e_1}\dotsc P_s^{e_s}$, then the Jacobi symbol is defined by 
\begin{equation*}
    \left(\frac{f}{Q}\right)=\prod_{j=1}^r\left(\frac{f}{P_j}\right)^{e_j}.
\end{equation*}
\begin{thm}[Quadratic Reciprocity] 
Let $A,B\in\mathbb{F}_q[x]$ be relatively prime and $A\neq 0$ and $B\neq 0$. Then
\begin{equation*}
    \left(\frac{A}{B}\right)=\left(\frac{B}{A}\right)(-1)^{\frac{q-1}{2}d(A)d(B)}.
\end{equation*}
\end{thm}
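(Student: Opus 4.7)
The plan is to follow the elegant resultant-based proof, which identifies each Jacobi symbol with the quadratic character of a resultant and then exploits the antisymmetry of the resultant. This bypasses the longer Gauss sum machinery that one typically sees in the classical number field treatment.

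First I would reduce to the case where $A=P$ and $B=Q$ are two distinct monic irreducible polynomials. Both sides of the claimed identity are jointly multiplicative in the two arguments: the Jacobi symbol factors through the prime decomposition by its very definition, while the sign $(-1)^{\frac{q-1}{2}d(A)d(B)}$ is multiplicative because degree is additive across products. Expanding $A=\prod_i P_i^{a_i}$ and $B=\prod_j Q_j^{b_j}$ and applying the prime-versus-prime reciprocity factor by factor reassembles the full statement. So it suffices to prove $\left(\frac{P}{Q}\right) = \left(\frac{Q}{P}\right)(-1)^{\frac{q-1}{2}mn}$ for distinct monic irreducibles $P,Q$ of degrees $m$ and $n$.

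Next I would identify each Jacobi symbol with the quadratic character of a resultant. Let $\alpha$ be a root of $P$ in $\mathbb{F}_{q^m}$, and write $\chi_q$ for the quadratic character on $\mathbb{F}_q^{*}$. Euler's criterion gives $\left(\frac{Q}{P}\right)\equiv Q(\alpha)^{(q^m-1)/2}\pmod{P}$, and factoring the exponent as $(q^m-1)/2 = \frac{q^m-1}{q-1}\cdot\frac{q-1}{2}$ allows one to rewrite it as $N_{\mathbb{F}_{q^m}/\mathbb{F}_q}(Q(\alpha))^{(q-1)/2} = \chi_q\bigl(N(Q(\alpha))\bigr)$. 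The norm unpacks as $N(Q(\alpha)) = \prod_{P(\alpha_i)=0}Q(\alpha_i) = \mathrm{Res}(P,Q)$, where the last equality is the standard product-of-values formula for the resultant of a monic polynomial against another polynomial. Hence $\left(\frac{Q}{P}\right) = \chi_q\bigl(\mathrm{Res}(P,Q)\bigr)$, and symmetrically $\left(\frac{P}{Q}\right) = \chi_q\bigl(\mathrm{Res}(Q,P)\bigr)$.

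Finally I would invoke the standard antisymmetry identity $\mathrm{Res}(P,Q) = (-1)^{mn}\mathrm{Res}(Q,P)$, a purely algebraic property of resultants of polynomials of degrees $m$ and $n$. Applying $\chi_q$ to both sides and using that it is a multiplicative character into $\{\pm 1\}$ yields $\left(\frac{Q}{P}\right) = (-1)^{mn(q-1)/2}\left(\frac{P}{Q}\right)$, which is precisely the claimed reciprocity. The main technical step, and the only place where real content enters, is the resultant-norm identification of the third paragraph: one must lift Euler's criterion into $\mathbb{F}_{q^m}$, recognize exponentiation by $(q^m-1)/(q-1)$ as the norm, and match that norm against the resultant via the product-of-values formula. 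Once this identification is secured, both the reduction to the irreducible case and the concluding antisymmetry step are essentially formal.
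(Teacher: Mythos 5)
The paper does not actually prove this statement: it is quoted as background material, with the proof deferred to Rosen's book (\cite{Rosen2002}), so there is no in-paper argument to compare against --- and the resultant proof you give is essentially the one found in that reference. Your argument is correct. The reduction to distinct monic irreducibles by bimultiplicativity of both sides is sound; the key identification $\left(\frac{Q}{P}\right)=\chi_q\bigl(\mathrm{Res}(P,Q)\bigr)$, obtained by lifting Euler's criterion to $\mathbb{F}_{q^m}$, factoring $(q^m-1)/2=\frac{q^m-1}{q-1}\cdot\frac{q-1}{2}$, recognizing the inner exponent as the norm, and matching $N_{\mathbb{F}_{q^m}/\mathbb{F}_q}(Q(\alpha))=\prod_i Q(\alpha_i)=\mathrm{Res}(P,Q)$ (using that $P$ is monic), is carried out correctly; and the antisymmetry $\mathrm{Res}(P,Q)=(-1)^{mn}\mathrm{Res}(Q,P)$ together with $\chi_q(-1)=(-1)^{(q-1)/2}$ closes the argument. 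One small caveat: your multiplicative reduction silently discards leading coefficients, so what you actually establish is the identity for \emph{monic} $A$ and $B$; for non-monic arguments the reciprocity law acquires extra factors $\mathrm{sgn}(A)^{\frac{q-1}{2}d(B)}\,\mathrm{sgn}(B)^{\frac{q-1}{2}d(A)}$, and the paper's statement as written is only literally correct in the monic case. This is immaterial here, since the paper applies reciprocity only to monic polynomials and moreover assumes $q\equiv 1\pmod 4$, which kills all of these signs anyway.
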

\par\noindent
If we assume that $q\equiv 1(\text{mod }4)$, then the quadratic reciprocity gives
\begin{equation*}
    \left(\frac{A}{B}\right)=\left(\frac{B}{A}\right).
\end{equation*}
For $g\geq 1$ we have
\begin{equation*}
    |\mathcal{H}_{2g+2}|=(q-1)q^{2g+1}=\frac{|D|}{\zeta_{\mathbb{A}}(2)}.
\end{equation*}
\begin{defn}
Let $D\in\mathbb{F}_q[x]$ be square-free. We define the quadratic character using the quadratic residue symbol for $\mathbb{F}_q[x]$ by 
\begin{equation*}
   \chi_D(f)=\left(\frac{D}{f}\right). 
\end{equation*}
Therefore, if $P\in\mathbb{F}_q[x]$, we have 
\begin{equation*}
    \chi_D(P)=\begin{cases}
    0,& \text{if }P|D,\\
    1,&\text{if }P\nmid D\text{ and }D\text{ is a square modulo }P,\\
    -1,& \text{ if }P\nmid D\text{ and }D\text{ is a non-square modulo }P.
    \end{cases}
\end{equation*}
\end{defn}
\begin{defn}
The L-function corresponding to the quadratic character $\chi_D$ by 
\begin{equation*}
    L(s,\chi_D)=\sum_{f\in\mathbb{A}^+}\frac{\chi_D(f)}{|f|^s},
\end{equation*}
which converges for $\Re(s)>1$. For the change of variables $u=q^{-s}$, we have
\begin{equation}
    L(s,\chi_D)=\mathcal{L}(u,\chi_D)=\sum_{f\in\mathbb{A}^+}\chi_D(f)u^{d(f)}=\prod_P(1-\chi_D(f)u^{d(P)})^{-1}. 
\end{equation}
\end{defn}
\par\noindent
Since $D$ is a monic, square-free polynomial, we have, from Proposition 4.3 in \cite{Rosen2002}, that $\mathcal{L}(u,\chi_D)$ is a polynomial in $u$ of $d(D)-1$. From \cite{Rudnick2008TracesEnsemble}, $\mathcal{L}(u,\chi_D)$ has a trivial zero if and only if $d(D)$ is even. This enables us to define the completed L-function, $\mathcal{L}^*(u,\chi_D)$, by 
\begin{equation}
    \mathcal{L}(u,\chi_D)=(1-u)^{\lambda}\mathcal{L}^*(u,\chi_D),
\end{equation}
where $\lambda=1$ if $d(D)$ is even and $\lambda=0$ otherwise. Then $\mathcal{L}^*(u,\chi_D)$ is a polynomial in $u$ of degree $2\delta=d(D)-1-\lambda$ and satisfies the functional equation 
\begin{equation}
    \mathcal{L}^*(u,\chi_D)=(qu^2)^{\delta}\mathcal{L}^*((qu)^{-1},\chi_D).
\end{equation}
\par\noindent
For $D$ a monic, square-free polynomial of degree $2g+1$ or $2g+2$, the affine equation $y^2=D(x)$ defines a projective and connected hyperelliptic curve $C_D$ of genus $g$ over $\mathbb{F}_q$. The zeta function associated to $C_D$ was first introduced by Artin, \cite{Artin1924}, and is defined by
\begin{equation}
    Z_{C_D}(u)=\exp\left(\sum_{n=1}^{\infty}N_n(C_D)\frac{u^n}{n}\right),
\end{equation}
where $N_n(C_D)$ is the number of points on $C_D$ with coordinates in a field extension $\mathbb{F}_{q^n}$ of $\mathbb{F}_q$ of degree $n\geq 1$. Weil, \cite{Weil1948}, showed that 
\begin{equation*}
    Z_{C_D}(u)= \frac{P_{C_D}(u)}{(1-u)(1-qu)},
\end{equation*}
where $P_{C_D}(u)$ is a polynomial of degree $2g$. In his thesis, Artin, proved that $P_{C_D}(u)=\mathcal{L}^*(u,\chi_D)$. Also, Weil, \cite{Weil1948}, proved the Riemann Hypothesis for function fields, thus all the zeros of $\mathcal{L}^*(u,\chi_D)$ lie on the circle $|u|=q^{-\frac{1}{2}}$.
\subsection{Functional Equation and Preliminary Lemmas}
For $D\in\mathcal{H}_{2g+2}$, the approximate functional equation was initially proved in Jung, \cite{Jung2013NoteEnsemble}, but has been corrected to match that of \cite{Rubinstein2015}. 
\begin{lemma}
Let $\chi_D$ be a quadratic character, where $D\in\mathcal{H}_{2g+2}$. Then
\begin{align*}
   L\left(\frac{1}{2},\chi_D\right)&=\sum_{n=0}^g\sum_{f\in\mathbb{A}^+_n}\chi_D(f)q^{-\frac{n}{2}}-q^{-\frac{g+1}{2}}\sum_{n=0}^g\sum_{f\in\mathbb{A}^+_n}\chi_D(f)\\
    &+\sum_{n=0}^{g-1}\sum_{f\in\mathbb{A}^+_n}\chi_D(f)q^{-\frac{n}{2}}-q^{-\frac{g}{2}}\sum_{n=0}^{g-1}\sum_{f\in\mathbb{A}^+_n}\chi_D(f).
\end{align*}
\end{lemma}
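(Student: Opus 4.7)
The plan is to derive the stated approximate functional equation directly from the exact functional equation of $\mathcal{L}^*(u, \chi_D)$ recorded in Section 2 (with $\delta = g$ since $d(D) = 2g+2$ is even), namely $\mathcal{L}^*(u, \chi_D) = (qu^2)^g \mathcal{L}^*((qu)^{-1}, \chi_D)$, combined with the factorisation $\mathcal{L}(u, \chi_D) = (1-u)\mathcal{L}^*(u, \chi_D)$. Evaluating at $u = q^{-1/2}$ produces $L(\tfrac{1}{2}, \chi_D)$ up to the prefactor $1 - q^{-1/2}$, which I eventually absorb by Abel summation.

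First I would write $\mathcal{L}^*(u, \chi_D) = \sum_{n=0}^{2g} C_n u^n$ and set $T_j := \sum_{f \in \mathbb{A}_j^+} \chi_D(f)$. Dividing the polynomial identity $\sum_{n=0}^{2g+1} T_n u^n = (1-u)\sum_{n=0}^{2g} C_n u^n$ by $(1-u)$ yields $C_n = \sum_{j=0}^n T_j = \sum_{f \in \mathbb{A}^+_{\leq n}} \chi_D(f)$. Matching coefficients in the functional equation translates to $C_n = q^{n-g} C_{2g-n}$. Setting $u = q^{-1/2}$ and splitting $\sum_{n=0}^{2g} C_n q^{-n/2}$ into the ranges $n \leq g-1$, $n = g$, and $n \geq g+1$, the substitution $k = 2g - n$ together with $C_n = q^{n-g} C_{2g-n}$ makes the factor $C_n q^{-n/2}$ on the upper range equal to $C_k q^{-k/2}$, so that range collapses to a second copy of the lower range, giving the symmetric identity
\[
\mathcal{L}^*(q^{-1/2}, \chi_D) = 2\sum_{n=0}^{g-1} C_n q^{-n/2} + C_g q^{-g/2}.
\]

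To land on exactly the four-term shape stated in the lemma, I would multiply by $1 - q^{-1/2}$ and convert the partial sums $C_n$ back to individual $T_n$. The key telescoping is
\[
(1 - q^{-1/2})\sum_{n=0}^{g-1} C_n q^{-n/2} = \sum_{n=0}^{g-1} T_n q^{-n/2} - q^{-g/2} C_{g-1},
\]
obtained by shifting $n \mapsto n+1$ in the second half of the Abel sum. Doubling this and adding $(1-q^{-1/2})C_g q^{-g/2} = (C_{g-1} + T_g)q^{-g/2} - C_g q^{-(g+1)/2}$, the $2\sum_{n=0}^{g-1} T_n q^{-n/2} + T_g q^{-g/2}$ contributions recombine as $\sum_{n=0}^{g} T_n q^{-n/2} + \sum_{n=0}^{g-1} T_n q^{-n/2}$, while the surviving $C_{g-1}$ and $C_g$ pieces deliver the two boundary terms $-q^{-g/2}\sum_{n=0}^{g-1} T_n$ and $-q^{-(g+1)/2}\sum_{n=0}^{g} T_n$. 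The one real obstacle is this bookkeeping: making sure that the $C_{g-1} q^{-g/2}$ produced by the telescoping exactly cancels one of the two $-C_{g-1} q^{-g/2}$ contributions, and that the $(1-q^{-1/2})$ factor is absorbed uniformly across both halves of the split — conceptually nothing more than the standard passage from an exact functional equation to its approximate counterpart, but the extra $(1-u)$ factor present for $D$ of even degree is precisely what is responsible for the two pairs of sums in the stated identity rather than the single pair that appears in the odd-degree case.
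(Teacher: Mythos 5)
Your derivation is correct: the coefficient identity $C_n=\sum_{f\in\mathbb{A}^+_{\leq n}}\chi_D(f)$, the symmetry $C_n=q^{n-g}C_{2g-n}$, and the telescoping against the factor $1-q^{-1/2}$ all check out and recombine exactly into the four sums of the statement. The paper itself offers no argument here — it simply cites Jung's Lemma 2.1 — and your proof is the standard passage from the exact functional equation of $\mathcal{L}^*(u,\chi_D)$ to the approximate one that underlies that reference, so nothing further is needed.
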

\begin{proof}
See \cite{Jung2013NoteEnsemble}, Lemma 2.1.
\end{proof}
\par\noindent
Using Lemma 2.4, it follows that
\begin{align}\label{eq:2.6}
    \sum_{D\in\mathcal{H}_{2g+2}}L\left(\frac{1}{2},\chi_D\right)&=\sum_{f\in\mathbb{A}^+_{\leq g}}\frac{1}{\sqrt{|f|}}\sum_{D\in\mathcal{H}_{2g+2}}\chi_D(f)-q^{-\frac{g+1}{2}}\sum_{f\in\mathbb{A}^+_{\leq g}}\sum_{D\in\mathcal{H}_{2g+2}}\chi_D(f)\nonumber\\
    &+\sum_{f\in\mathbb{A}^+_{\leq g-1}}\frac{1}{\sqrt{|f|}}\sum_{D\in\mathcal{H}_{2g+2}}\chi_D(f)-q^{-\frac{g}{2}}\sum_{f\in\mathbb{A}^+_{\leq g-1}}\sum_{D\in\mathcal{H}_{2g+2}}\chi_D(f).
\end{align}
We now state two Lemmas that will be used in the calculations later. 
\begin{lemma}
Let $f\in\mathbb{A}^+$. Then we have
\begin{equation}
    \sum_{D\in\mathcal{H}_{2g+2}}\chi_D(f)=\sum_{\substack{C|f^{\infty}\\C\in\mathbb{A}^+_{\leq g+1}}}\sum_{h\in\mathbb{A}^+_{2g+2-2d(C)}}\chi_f(h)-q\sum_{\substack{C|f^{\infty}\\C\in\mathbb{A}^+_{\leq g}}}\sum_{h\in\mathbb{A}^+_{2g-2d(C)}}\chi_f(h),
\end{equation}
where $C|f^{\infty}$ means that any prime factor of $C$ are among the prime factors of $f$.
\end{lemma}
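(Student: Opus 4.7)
The plan is to establish the identity via a generating-function computation in an auxiliary variable $u$. Setting
$$F(u) := \sum_{D \text{ monic square-free}} \chi_D(f)\, u^{d(D)},$$
the claim amounts to extracting the coefficient of $u^{2g+2}$ from $F(u)$. Using the hypothesis $q \equiv 1\,(\text{mod }4)$ so that quadratic reciprocity gives $\chi_D(f) = \chi_f(D)$, and using the fact that every square-free $D$ factors as a product of distinct primes, I would obtain the Euler-type expansion
$$F(u) = \prod_P \bigl(1 + \chi_f(P)\, u^{d(P)}\bigr) = \prod_{P \nmid f}\bigl(1 + \chi_f(P)\, u^{d(P)}\bigr),$$
since primes dividing $f$ contribute trivially because $\chi_f(P) = 0$ there.

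Next, I would apply the telescoping identity $1 + x = (1 - x^2)/(1 - x)$ with $x = \chi_f(P)\, u^{d(P)}$, using $\chi_f(P)^2 = 1$ for $P \nmid f$, to rewrite
$$F(u) = \prod_{P \nmid f}\frac{1 - u^{2d(P)}}{1 - \chi_f(P)\, u^{d(P)}} = \mathcal{L}(u, \chi_f)\prod_{P \nmid f}\bigl(1 - u^{2d(P)}\bigr).$$
Since $\prod_P (1 - u^{2d(P)}) = \mathcal{Z}(u^2)^{-1} = 1 - qu^2$, this reduces to
$$F(u) = \mathcal{L}(u, \chi_f) \cdot \frac{1 - qu^2}{\displaystyle\prod_{P \mid f}\bigl(1 - u^{2d(P)}\bigr)}.$$

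To finish, I would expand the remaining denominator geometrically as
$$\frac{1}{\prod_{P \mid f}(1 - u^{2d(P)})} = \sum_{C \mid f^\infty} u^{2d(C)},$$
recognizing the right-hand side as the generating series for monic divisors $C$ supported on the prime factors of $f$. Combined with $\mathcal{L}(u, \chi_f) = \sum_{h \in \mathbb{A}^+} \chi_f(h)\, u^{d(h)}$, reading off the coefficient of $u^{2g+2}$ from
$$F(u) = (1 - qu^2)\sum_{C \mid f^\infty} u^{2d(C)}\sum_{h \in \mathbb{A}^+} \chi_f(h)\, u^{d(h)}$$
yields exactly the two sums in the lemma: the first from the constant $1$ in $1 - qu^2$, which forces $d(C) \le g+1$ and $h \in \mathbb{A}^+_{2g+2-2d(C)}$, and the second with the prefactor $-q$ coming from the $-qu^2$ term, which shifts the relevant degree so that $d(C) \le g$ and $h \in \mathbb{A}^+_{2g-2d(C)}$. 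There is no substantive obstacle; the only steps requiring real care are the application of reciprocity (valid because of the hypothesis on $q$) and the telescoping step, which depends crucially on $\chi_f(P)^2 = 1$ for $P \nmid f$.
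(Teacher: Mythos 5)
Your proof is correct. The paper does not prove this lemma itself but cites Florea's Lemma 2.2, whose argument is essentially the same generating-function computation you give: Florea organizes the square-free condition via the M\"obius sieve $\mu^2(D)=\sum_{A^2\mid D}\mu(A)$ rather than an Euler product over square-free $D$, but the reciprocity step, the key identity $\prod_{P\nmid f}\bigl(1-u^{2d(P)}\bigr)=(1-qu^2)\sum_{C\mid f^{\infty}}u^{2d(C)}$, and the extraction of the coefficient of $u^{2g+2}$ are the same.
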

\begin{proof}
See \cite{Florea2017}, Lemma 2.2
\end{proof}
\par\noindent
We now state a version of Poisson summation formula over function fields. For $a\in\mathbb{F}_q((\frac{1}{t}))$, let $e(a):=e^{2\pi ia_1/q}$, where $a_1$ is the coefficient of $\frac{1}{t}$ in the expansion of $a$ (for more information, see \cite{Florea2017}, section 3). For $\chi$ a general character modulo $f$, the generalised Gauss sum $G(V,\chi_f)$ is defined as 
\begin{equation}
    G(V,\chi)=\sum_{A\text{ mod } f}\chi(A)e\left(\frac{AV}{f}\right).
\end{equation}
The following Poisson summation formula holds.
\begin{lemma}
Let $f\in\mathbb{A}^+$ and let $m$ be a positive integer. 
\begin{enumerate}
    \item If $d(f)$ is odd, then 
    \begin{equation}
        \sum_{g\in\mathbb{A}^+_m}\chi_f(g)=\frac{q^{m+\frac{1}{2}}}{|f|}\sum_{V\in\mathbb{A}^+_{d(f)-m-1}}G(V,\chi_f).
    \end{equation}
    \item If $d(f)$ is even, then 
    \begin{equation}
        \sum_{g\in\mathbb{A}^+_m}\chi_f(g)=\frac{q^m}{|f|}\left(G(0,\chi_f)+(q-1)\sum_{V\in\mathbb{A}^+_{\leq d(f)-m-2}}G(V,\chi_f)-\sum_{V\in\mathbb{A}^+_{d(f)-m-1}}G(V,\chi_f)\right).
    \end{equation}
\end{enumerate}
\end{lemma}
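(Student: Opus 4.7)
The plan is to realize $\chi_f$ as the inverse discrete Fourier transform of its Gauss sums modulo $f$, interchange summation with the sum over $h\in\mathbb{A}^+_m$, evaluate the resulting exponential sum by reading off the coefficient of $t^{-1}$ in the Laurent expansion at infinity, and split the range of $V$ into the three regimes $V=0$, $0<d(V)\le d(f)-m-2$ and $d(V)=d(f)-m-1$. The dichotomy on the parity of $d(f)$ will emerge only through the restriction of $\chi_f$ to $\mathbb{F}_q^\ast$.

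Concretely, orthogonality of the additive characters $A\mapsto e(AV/f)$ of $\mathbb{A}/f\mathbb{A}$ yields the inversion
\begin{equation*}
\chi_f(h) \;=\; \frac{1}{|f|}\sum_{V\bmod f} G(V,\chi_f)\, e\!\left(-\tfrac{hV}{f}\right),
\end{equation*}
so after interchanging sums the problem reduces to evaluating $S(V):=\sum_{h\in\mathbb{A}^+_m}e(-hV/f)$. Writing $h=t^m+\sum_{i=0}^{m-1}a_it^i$ with $a_i\in\mathbb{F}_q$ free and expanding $V/f=\sum_{j\ge 1}c_jt^{-j}$ in $\mathbb{F}_q((1/t))$, the coefficient of $t^{-1}$ in $hV/f$ is $c_{m+1}+\sum_{i=0}^{m-1}a_ic_{i+1}$, so summing the additive character over $a_0,\dots,a_{m-1}$ factors into $m$ geometric sums over $\mathbb{F}_q$. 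Each of these vanishes unless the corresponding $c_{i+1}$ is zero, and therefore $S(V)=0$ unless $d(V)\le d(f)-m-1$; inside that range, $S(V)=q^m$ when $d(V)<d(f)-m-1$, and $S(V)=q^m\omega^{\mathrm{lc}(V)}$ when $d(V)=d(f)-m-1$, where $\omega:=e^{2\pi i/q}$ and $\mathrm{lc}(V)$ is the leading coefficient of $V$.

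I then parameterize every nonzero $V$ of a given degree uniquely as $V=\alpha V'$ with $\alpha\in\mathbb{F}_q^\ast$ and $V'$ monic, and use the twist $G(\alpha V',\chi_f)=\chi_f(\alpha)G(V',\chi_f)$ coming from $A\mapsto\alpha^{-1}A$. The interior range then contributes a scalar multiple of $\sum_{V'\in\mathbb{A}^+_{\le d(f)-m-2}}G(V',\chi_f)$, with scalar $(q^m/|f|)\sum_{\alpha\in\mathbb{F}_q^\ast}\chi_f(\alpha)$; the boundary range contributes a scalar multiple of $\sum_{V'\in\mathbb{A}^+_{d(f)-m-1}}G(V',\chi_f)$, with scalar $(q^m/|f|)\sum_{\alpha\in\mathbb{F}_q^\ast}\chi_f(\alpha)\omega^{\alpha}$; and $V=0$ adds $q^m G(0,\chi_f)/|f|$. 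These prefactors are evaluated via the identity $\chi_f(\alpha)=\alpha^{(|f|-1)/2}=\chi(\alpha)^{d(f)}$ on $\mathbb{F}_q^\ast$ (with $\chi$ the Legendre symbol of $\mathbb{F}_q$), the classical Gauss sum $\sum_{\alpha}\chi(\alpha)\omega^\alpha=\sqrt{q}$ (available because $q\equiv 1\pmod 4$), and $\sum_{\alpha\in\mathbb{F}_q^\ast}\chi(\alpha)=0$, $\sum_{\alpha\in\mathbb{F}_q^\ast}\omega^\alpha=-1$. If $d(f)$ is odd then $\chi_f|_{\mathbb{F}_q^\ast}=\chi$ is nontrivial, the interior prefactor vanishes, the boundary prefactor equals $\sqrt{q}$, and $G(0,\chi_f)=0$ (since $\chi_f$ is then nonprincipal, as $f$ cannot be a perfect square when $d(f)$ is odd), which yields formula~(1); if $d(f)$ is even then $\chi_f|_{\mathbb{F}_q^\ast}$ is trivial, giving interior prefactor $q-1$ and boundary prefactor $-1$, which together with the $V=0$ term deliver formula~(2). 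The principal obstacle is the clean bookkeeping of the three regimes and the verification that $\chi_f|_{\mathbb{F}_q^\ast}=\chi^{d(f)}$ with the correct parity; that single identity is exactly what separates the two stated formulas.
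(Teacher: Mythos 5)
The paper does not actually prove this lemma---it simply cites Proposition 3.1 of Florea \cite{Florea2017}---and your argument is a correct reconstruction of exactly that proof: Fourier inversion of $\chi_f$ against its Gauss sums, evaluation of $\sum_{h\in\mathbb{A}^+_m}e(-hV/f)$ via the $t^{-1}$-coefficient, the parameterization $V=\alpha V'$ with $V'$ monic, and the identity $\chi_f(\alpha)=\chi(\alpha)^{d(f)}$ on $\mathbb{F}_q^\ast$, which is indeed the source of the parity dichotomy. The only slip is a harmless sign: with $e(-hV/f)$ the boundary evaluation is $q^m\omega^{-\mathrm{lc}(V)}$, so the boundary prefactor is $\sum_{\alpha}\chi_f(\alpha)\omega^{-\alpha}$ rather than $\sum_{\alpha}\chi_f(\alpha)\omega^{\alpha}$; since $q\equiv 1\pmod 4$ gives $\chi(-1)=1$, this still equals $\sqrt{q}$ (resp.\ $-1$) in the odd (resp.\ even) case, so the stated formulas are unaffected.
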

\begin{proof}
See \cite{Florea2017}, Proposition 3.1.
\end{proof}
\begin{remark}
$G(0,\chi_f)$ is nonzero if and only if $f$ is a square, in which case $G(0,\chi_f)=\phi(f)$, where $\phi(f)$ is Euler's phi function for polynomials in $\mathbb{F}_q[x]$.
\end{remark}
\par\noindent
\begin{lemma}[The function field analogue of Perron's formula]
If the power series 
\begin{equation*}
H(u)=\sum_{f\in\mathbb{A}^+}a(f)u^{d(f)}
\end{equation*}
converges absolutely for $|u|\leq R<1$, then 
\begin{equation}
    \sum_{f\in\mathbb{A}^+_n}a(f)=\frac{1}{2\pi i}\oint_{|u|=R}\frac{H(u)}{u^{n+1}}du
\end{equation}
and
\begin{equation}
    \sum_{f\in\mathbb{A}^+_{\leq n}}a(f)=\frac{1}{2\pi i}\oint_{|u|=R}\frac{H(u)}{(1-u)u^{n+1}}du.
\end{equation}
\end{lemma}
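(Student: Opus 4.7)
The plan is to reduce the lemma to a standard coefficient-extraction argument for power series in one complex variable. Working on the circle $|u|=R$ with $R<1$, I will regroup the series defining $H$ by degree and then invoke Cauchy's residue theorem.

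Since each $\mathbb{A}^+_k$ is finite, I can collect terms and write
\[
H(u) \;=\; \sum_{k=0}^{\infty} b_k\, u^k, \qquad b_k \;:=\; \sum_{f\in\mathbb{A}^+_k} a(f).
\]
The hypothesis that the original series converges absolutely on $|u|\le R$ gives $\sum_{k\ge 0}|b_k|R^k<\infty$, so the regrouped series converges absolutely and uniformly on $|u|=R$ by the Weierstrass $M$-test. This will justify the interchange of summation and contour integration below. Combining this with the standard identity
\[
\frac{1}{2\pi i}\oint_{|u|=R} u^{k-n-1}\,du \;=\; \begin{cases} 1, & k=n,\\ 0, & k\neq n,\end{cases}
\]
obtained immediately from the parametrisation $u=Re^{i\theta}$, and integrating $H(u)/u^{n+1}$ termwise, picks out precisely $b_n$ and yields the first formula.

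For the cumulative version I would multiply by $\tfrac{1}{1-u}=\sum_{j\ge 0}u^j$. Since $R<1$, this geometric series converges absolutely and uniformly on $|u|=R$, so the Cauchy product with $H(u)$ is legitimate and produces
\[
\frac{H(u)}{1-u} \;=\; \sum_{n=0}^{\infty} \Bigl(\sum_{k=0}^{n} b_k\Bigr) u^n \;=\; \sum_{n=0}^{\infty}\Bigl(\sum_{f\in\mathbb{A}^+_{\le n}} a(f)\Bigr) u^n.
\]
Extracting the coefficient of $u^n$ by the same residue argument, now applied to $H(u)/((1-u)u^{n+1})$, delivers the second identity. The only real subtlety throughout is the justification of term-by-term integration, which is handled uniformly by the absolute convergence hypothesis on the compact circle $|u|=R$; there is no deeper obstacle, and the proof amounts to Cauchy's integral formula in disguise.
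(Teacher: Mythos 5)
Your proof is correct and complete: the regrouping by degree is justified by absolute convergence, the orthogonality relation on the circle $|u|=R$ extracts $b_n$, and multiplying by the geometric series $\frac{1}{1-u}$ (valid since $R<1$) handles the cumulative sum. The paper states this lemma without giving any proof, so there is no argument to compare against, but yours is the standard coefficient-extraction/Cauchy-formula argument that any reference would supply, with the interchange of sum and integral properly justified.
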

\section{Setup of the Problem}
Using Lemma 2.5 and the approximate functional equation (\ref{eq:2.6}), we write
\begin{equation}\label{firsteq}
    \sum_{D\in\mathcal{H}_{2g+2}}L\left(\frac{1}{2},\chi_D\right)=\mathcal{S}_{g,1}-\mathcal{S}_{g,2}+\mathcal{S}_{g-1,1}-\mathcal{S}_{g-1,2},
\end{equation}
where
\begin{equation*}
    \mathcal{S}_{g,1}=\sum_{f\in\mathbb{A}^+_{\leq g}}\frac{1}{\sqrt{|f|}}\sum_{\substack{C|f^{\infty}\\C\in\mathbb{A}^+_{\leq g+1}}}\sum_{h\in\mathbb{A}^+_{2g+2-2d(C)}}\chi_f(h)-q\sum_{f\in\mathbb{A}^+_{\leq g}}\frac{1}{\sqrt{|f|}}\sum_{\substack{C|f^{\infty}\\C\in\mathbb{A}^+_{\leq g}}}\sum_{h\in\mathbb{A}^+_{2g-2d(C)}}\chi_f(h),
\end{equation*}
\begin{equation*}
    \mathcal{S}_{g,2}=q^{-\frac{g+1}{2}}\sum_{f\in\mathbb{A}^+_{\leq g}}\sum_{\substack{C|f^{\infty}\\C\in\mathbb{A}^+_{\leq g+1}}}\sum_{h\in\mathbb{A}^+_{2g+2-2d(C)}}\chi_f(h)-q\sum_{f\in\mathbb{A}^+_{\leq g}}\sum_{\substack{C|f^{\infty}\\C\in\mathbb{A}^+_{\leq g}}}\sum_{h\in\mathbb{A}^+_{2g-2d(C)}}\chi_f(h),
\end{equation*}
\begin{equation*}
    \mathcal{S}_{g-1,1}=\sum_{f\in\mathbb{A}^+_{\leq g-1}}\frac{1}{\sqrt{|f|}}\sum_{\substack{C|f^{\infty}\\C\in\mathbb{A}^+_{\leq g+1}}}\sum_{h\in\mathbb{A}^+_{2g+2-2d(C)}}\chi_f(h)-q\sum_{f\in\mathbb{A}^+_{\leq g-1}}\frac{1}{\sqrt{|f|}}\sum_{\substack{C|f^{\infty}\\C\in\mathbb{A}^+_{\leq g}}}\sum_{h\in\mathbb{A}^+_{2g-2d(C)}}\chi_f(h)
\end{equation*}
and
\begin{equation*}
    \mathcal{S}_{g-1,2}=q^{-\frac{g}{2}}\sum_{f\in\mathbb{A}^+_{\leq g-1}}\sum_{\substack{C|f^{\infty}\\C\in\mathbb{A}^+_{\leq g+1}}}\sum_{h\in\mathbb{A}^+_{2g+2-2d(C)}}\chi_f(h)-q\sum_{f\in\mathbb{A}^+_{\leq g-1}}\sum_{\substack{C|f^{\infty}\\C\in\mathbb{A}^+_{\leq g}}}\sum_{h\in\mathbb{A}^+_{2g-2d(C)}}\chi_f(h).
\end{equation*}
From section 4 in \cite{Florea2017}, we have 
\begin{equation*}
    \sum_{\substack{C|f^{\infty}\\C\in\mathbb{A}^+_{g+1}}}1\ll q^{g\epsilon}
\end{equation*}
we see that the terms in $\mathcal{S}_{g,1}, \mathcal{S}_{g,2}, \mathcal{S}_{g-1,1}$ and $\mathcal{S}_{g-1,2}$ corresponding to $C\in\mathbb{A}^+_{g+1}$ are bounded by $O(q^{\frac{g}{2}(1+\epsilon)})$. Therefore we can rewrite the terms as 
\begin{equation}
    \mathcal{S}_{g,1}=\sum_{f\in\mathbb{A}^+_{\leq g}}\frac{1}{\sqrt{|f|}}\sum_{\substack{C|f^{\infty}\\C\in\mathbb{A}^+_{\leq g}}}\left(\sum_{h\in\mathbb{A}^+_{2g+2-2d(C)}}\chi_f(h)-q\sum_{h\in\mathbb{A}^+_{2g-2d(C)}}\chi_f(h)\right)+O(q^{\frac{g}{2}(1+\epsilon)}),
\end{equation}
\begin{equation}
    \mathcal{S}_{g,2}=q^{-\frac{g+1}{2}}\sum_{f\in\mathbb{A}^+_{\leq g}}\sum_{\substack{C|f^{\infty}\\C\in\mathbb{A}^+_{\leq g}}}\left(\sum_{h\in\mathbb{A}^+_{2g+2-2d(C)}}\chi_f(h)-q\sum_{h\in\mathbb{A}^+_{2g-2d(C)}}\chi_f(h)\right)+O(q^{\frac{g}{2}(1+\epsilon)}),
\end{equation}
\begin{equation}
    \mathcal{S}_{g-1,1}=\sum_{f\in\mathbb{A}^+_{\leq g-1}}\frac{1}{\sqrt{|f|}}\sum_{\substack{C|f^{\infty}\\C\in\mathbb{A}^+_{\leq g}}}\left(\sum_{h\in\mathbb{A}^+_{2g+2-2d(C)}}\chi_f(h)-q\sum_{h\in\mathbb{A}^+_{2g-2d(C)}}\chi_f(h)\right)+O(q^{\frac{g}{2}(1+\epsilon)})
\end{equation}
and
\begin{equation}
    \mathcal{S}_{g-1,2}=q^{-\frac{g}{2}}\sum_{f\in\mathbb{A}^+_{\leq g}}\sum_{\substack{C|f^{\infty}\\C\in\mathbb{A}^+_{\leq g}}}\left(\sum_{h\in\mathbb{A}^+_{2g+2-2d(C)}}\chi_f(h)-q\sum_{h\in\mathbb{A}^+_{2g-2d(C)}}\chi_f(h)\right)+O(q^{\frac{g}{2}(1+\epsilon)}).
\end{equation}
For $k\in\{g,g-1\}$ and $\ell\in\{1,2\}$, write
\begin{equation}\label{secondeq}
    \mathcal{S}_{k,\ell}=\mathcal{S}^o_{k,\ell}+\mathcal{S}^e_{k,\ell}+O(q^{\frac{g}{2}(1+\epsilon)}),
\end{equation} where $\mathcal{S}^o_{k,\ell}$ and $\mathcal{S}^e_{k,\ell}$ denotes the sum over $f\in\mathbb{A}^+_{\leq k}$ of odd and even degree respectively. If $d(f)$ is odd, then using Lemma 2.6, we have
\begin{equation}
    \mathcal{S}^o_{g,1}=q^{2g+\frac{5}{2}}\sum_{\substack{f\in\mathbb{A}^+_{\leq g}\\d(f)\text{ odd}}}\frac{1}{|f|}\sum_{\substack{C|f^{\infty}\\C\in\mathbb{A}^+_{\leq g}}}\frac{1}{|C|^2}S^o(V;f,C),
\end{equation}
\begin{equation}
    \mathcal{S}^o_{g,2}=q^{\frac{3g}{2}+2}\sum_{\substack{f\in\mathbb{A}^+_{\leq g}\\d(f)\text{ odd}}}\frac{1}{\sqrt{|f|}}\sum_{\substack{C|f^{\infty}\\C\in\mathbb{A}^+_{\leq g}}}\frac{1}{|C|^2}S^o(V;f,C),
\end{equation}
\begin{equation}
    \mathcal{S}^o_{g-1,1}=q^{2g+\frac{5}{2}}\sum_{\substack{f\in\mathbb{A}^+_{\leq g-1}\\d(f)\text{ odd}}}\frac{1}{|f|}\sum_{\substack{C|f^{\infty}\\C\in\mathbb{A}^+_{\leq g}}}\frac{1}{|C|^2}S^o(V;f,C)
\end{equation}
and
\begin{equation}
   \mathcal{S}^o_{g-1,2}=q^{\frac{3g+5}{2}}\sum_{\substack{f\in\mathbb{A}^+_{\leq g-1}\\d(f)\text{ odd}}}\frac{1}{\sqrt{|f|}}\sum_{\substack{C|f^{\infty}\\C\in\mathbb{A}^+_{\leq g}}}\frac{1}{|C|^2}S^o(V;f,C),
\end{equation}
where 
\begin{equation}\label{eq:3.9}
    S^o(V;f,C)=\sum_{V\in\mathbb{A}^+_{d(f)-2g-3+2d(C)}}\frac{G(V,\chi_f)}{\sqrt{|f|}}-\frac{1}{q}\sum_{V\in\mathbb{A}^+_{d(f)-2g-1+2d(C)}}\frac{G(V,\chi_f)}{\sqrt{|f|}}.
\end{equation}
\par\noindent
If $d(f)$ is even, then we rewrite $\mathcal{S}^e_{k,\ell}$ as
\begin{equation}\label{thirdeq}
    \mathcal{S}^e_{k,\ell}=M_{k,\ell}+S^e_{k,\ell,1}+S^e_{k,\ell,2}.
\end{equation}
Using the remark from the previous section, we have
\begin{equation}\label{eq:3.10}
    M_{g,1}=\frac{q^{2g+2}}{\zeta_{\mathbb{A}}(2)}\sum_{L\in\mathbb{A}^+_{\leq \left[\frac{g}{2}\right]}}\frac{\phi(L^2)}{|L|^3}\sum_{\substack{C|f^{\infty}\\C\in\mathbb{A}^+_{\leq g}}}\frac{1}{|C|^2},
    \end{equation}
    \begin{equation}\label{eq:3.11}
    M_{g,2}=\frac{q^{\frac{3g+3}{2}}}{\zeta_{\mathbb{A}}(2)}\sum_{L\in\mathbb{A}^+_{\leq \left[\frac{g}{2}\right]}}\frac{\phi(L^2)}{|L|^2}\sum_{\substack{C|f^{\infty}\\C\in\mathbb{A}^+_{\leq g}}}\frac{1}{|C|^2},
\end{equation}
\begin{equation}\label{eq:3.12}
    M_{g-1,1}=\frac{q^{2g+2}}{\zeta_{\mathbb{A}}(2)}\sum_{L\in\mathbb{A}^+_{\leq \left[\frac{g-1}{2}\right]}}\frac{\phi(L^2)}{|L|^3}\sum_{\substack{C|f^{\infty}\\C\in\mathbb{A}^+_{\leq g}}}\frac{1}{|C|^2}
\end{equation}
and
\begin{equation}\label{eq:3.13}
    M_{g-1,2}=\frac{q^{\frac{3g}{2}+2}}{\zeta_{\mathbb{A}}(2)}\sum_{L\in\mathbb{A}^+_{\leq \left[\frac{g-1}{2}\right]}}\frac{\phi(L^2)}{|L|^2}\sum_{\substack{C|f^{\infty}\\C\in\mathbb{A}^+_{\leq g}}}\frac{1}{|C|^2}.
\end{equation}
Similarly, for $j\in\{1,2\}$ we have
\begin{equation}
    S^e_{g,1,j}=q^{2g+2}\sum_{\substack{f\in\mathbb{A}^+_{\leq g}\\d(f)\text{ even}}}\frac{1}{|f|}\sum_{\substack{C|f^{\infty}\\C\in\mathbb{A}^+_{\leq g}}}\frac{1}{|C|^2}S^e_j(V;f,C),
\end{equation}
\begin{equation}
    S^e_{g,2,j}=q^{\frac{3g+3}{2}}\sum_{\substack{f\in\mathbb{A}^+_{\leq g}\\d(f)\text{ even}}}\frac{1}{\sqrt{|f|}}\sum_{\substack{C|f^{\infty}\\C\in\mathbb{A}^+_{\leq g}}}\frac{1}{|C|^2}S^e_j(V;f,C),
\end{equation}
\begin{equation}
    S^e_{g-1,1,j}=q^{2g+2}\sum_{\substack{f\in\mathbb{A}^+_{\leq g-1}\\d(f)\text{ even}}}\frac{1}{|f|}\sum_{\substack{C|f^{\infty}\\C\in\mathbb{A}^+_{\leq g}}}\frac{1}{|C|^2}S^e_j(V;f,C)
\end{equation}
and
\begin{equation}
    S^e_{g-1,2,j}=q^{\frac{3g}{2}+2}\sum_{\substack{f\in\mathbb{A}^+_{\leq g-1}\\d(f)\text{ even}}}\frac{1}{\sqrt{|f|}}\sum_{\substack{C|f^{\infty}\\C\in\mathbb{A}^+_{\leq g}}}\frac{1}{|C|^2}S^e_j(V;f,C),
\end{equation}
where 
\begin{equation}\label{eq:3.18}
    S_1^e(V;f,C)=(q-1)\sum_{V\in\mathbb{A}^+_{\leq d(f)-2g-4+2d(C)}}\frac{G(V,\chi_f)}{\sqrt{|f|}}-\frac{q-1}{q}\sum_{V\in\mathbb{A}^+_{\leq d(f)-2g-2+2d(C)}}\frac{G(V,\chi_f)}{\sqrt{|f|}}
\end{equation}
and 
\begin{equation}\label{eq:3.22}
  S^e_2(V;f,C)=\frac{1}{q}\sum_{V\in\mathbb{A}^+_{d(f)-2g-1+2d(C)}}\frac{G(V,\chi_f)}{\sqrt{|f|}}-\sum_{V\in\mathbb{A}^+_{d(f)-2g-3+2d(C)}}\frac{G(V,\chi_f)}{\sqrt{|f|}}.
\end{equation}
\par\noindent
Define $S^i_{k,\ell}(V=\square)$ to be the sum over $V$ square and $S^i_{k,\ell}(V\neq\square)$ to be the sum over non-square $V$. Note that in Equation (\ref{eq:3.22}), when $d(f)$ is even, $d(V)$ is odd and so $V$ cannot be a square. Also note that in Equation (\ref{eq:3.9}), when $d(f)$ is odd, $d(V)$ is even, thus there is a contribution to the main term when $d(f)$ is odd, which is not present when working in the imaginary function field case. 
\section{Main Term}
In this section we evaluate the main terms, $M_{g,1}, M_{g,2}, M_{g-1,1}$ and $M_{g-1,2}$. The main result in this section is the following result.
\begin{prop}
For any $\epsilon>0$, we have that
\begin{equation*}
     M_{g,1}=\frac{q^{2g+2}}{\zeta_{\mathbb{A}}(2)}\frac{1}{2\pi i}\oint_{|u|=r}\frac{\mathcal{C}(u)}{u(1-qu)^2(qu)^{\left[\frac{g}{2}\right]}}du+O(q^{g\epsilon}),
\end{equation*}
   \begin{equation*}
      M_{g-1,1}=\frac{q^{2g+2}}{\zeta_{\mathbb{A}}(2)}\frac{1}{2\pi i}\oint_{|u|=r}\frac{\mathcal{C}(u)}{u(1-qu)^2(qu)^{\left[\frac{g-1}{2}\right]}}du+O(q^{g\epsilon}), 
   \end{equation*}
   \begin{equation*}
       M_{g,2}=\frac{q^{\frac{3g+3}{2}}}{\zeta_{\mathbb{A}}(2)}\frac{1}{2\pi i}\oint_{|u|=r}\frac{\mathcal{C}(u)}{u(1-u)(1-qu)u^{\left[\frac{g}{2}\right]}}du+O(q^{g\epsilon})
   \end{equation*}
and
\begin{equation*}
     M_{g-1,2}=\frac{q^{\frac{3g}{2}+2}}{\zeta_{\mathbb{A}}(2)}\frac{1}{2\pi i}\oint_{|u|=r}\frac{\mathcal{C}(u)}{u(1-u)(1-qu)u^{\left[\frac{g-1}{2}\right]}}du+O(q^{g\epsilon}),
\end{equation*}
where $r<q^{-1}$ and 
\begin{equation}
    \mathcal{C}(u)=\prod_P\left(1-\frac{u^{d(P)}}{|P|+1}\right).
\end{equation}
\end{prop}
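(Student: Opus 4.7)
The four main terms $M_{g,1},M_{g,2},M_{g-1,1},M_{g-1,2}$ share a common structure; they differ only in the power of $|L|$ in the summand ($|L|^{-3}$ versus $|L|^{-2}$), in the overall $q$-power, and in the truncation index ($[g/2]$ or $[(g-1)/2]$) on $d(L)$. The plan is to carry out the argument for $M_{g,1}$ in detail and then indicate the analogous steps for the other three cases.

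The first step is to lift the constraint $d(C)\leq g$ on the inner sum in (\ref{eq:3.10}) to an unrestricted sum over $C\mid L^\infty$. Using the divisor-type bound recorded in Section~3, namely $|\{C\mid L^\infty : d(C)=m\}|\ll q^{m\epsilon}$, the removed tail satisfies
\begin{equation*}
\sum_{L\in\mathbb{A}^+_{\leq [g/2]}} \frac{\phi(L^2)}{|L|^3}\sum_{\substack{C\mid L^\infty \\ d(C)>g}} \frac{1}{|C|^2} \;\ll\; \Big(\sum_{L\in\mathbb{A}^+_{\leq [g/2]}}\frac{1}{|L|}\Big)\sum_{m>g} q^{-m(2-\epsilon)} \;\ll\; g\cdot q^{-g(2-\epsilon)},
\end{equation*}
which after multiplication by the prefactor $q^{2g+2}/\zeta_{\mathbb{A}}(2)$ contributes $O(q^{g\epsilon})$ and may be absorbed into the error.

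With the $C$-constraint removed, I compute the Dirichlet series
\begin{equation*}
B(u)\;:=\;\sum_{L\in\mathbb{A}^+}\frac{\phi(L^2)}{|L|^3}\Big(\sum_{C\mid L^\infty}\frac{1}{|C|^2}\Big)u^{d(L)}
\end{equation*}
as an Euler product. The geometric sum over $C_P=P^j$ produces the factor $(1-|P|^{-2})^{-1}$; combining with the $L_P$-sum and using $\phi(P^{2k})=|P|^{2k}(1-|P|^{-1})$, the local factor at $P$ simplifies to
\begin{equation*}
\frac{1-u^{d(P)}/(|P|(|P|+1))}{1-u^{d(P)}/|P|}.
\end{equation*}
Since $u^{d(P)}/|P|=(u/q)^{d(P)}$ yields $\prod_P(1-u^{d(P)}/|P|)^{-1}=\mathcal{Z}(u/q)=1/(1-u)$, while $u^{d(P)}/(|P|(|P|+1))=(u/q)^{d(P)}/(|P|+1)$, I get $B(u)=\mathcal{C}(u/q)/(1-u)$.

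Applying Perron's formula (Lemma~2.8) at any radius $r_0<1$ then gives
\begin{equation*}
\sum_{L\in\mathbb{A}^+_{\leq [g/2]}}\frac{\phi(L^2)}{|L|^3}\sum_{C\mid L^\infty}\frac{1}{|C|^2}\;=\;\frac{1}{2\pi i}\oint_{|u|=r_0}\frac{\mathcal{C}(u/q)}{(1-u)^2\,u^{[g/2]+1}}\,du,
\end{equation*}
and the change of variable $u\mapsto qu$ (moving the contour to $|u|=r_0/q<q^{-1}$) converts the integrand into $\mathcal{C}(u)/\bigl(u(1-qu)^2(qu)^{[g/2]}\bigr)$, proving the stated formula for $M_{g,1}$ upon multiplication by $q^{2g+2}/\zeta_{\mathbb{A}}(2)$. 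The case $M_{g-1,1}$ is identical with $[g/2]$ replaced by $[(g-1)/2]$. For $M_{g,2}$ and $M_{g-1,2}$ the summand carries $\phi(L^2)/|L|^2$ instead of $\phi(L^2)/|L|^3$; the analogous local calculation yields local factor $(1-u^{d(P)}/(|P|+1))/(1-u^{d(P)})$, hence generating series $\mathcal{C}(u)/(1-qu)$, and Perron gives the claimed formula directly with no change of variable required. The main obstacle is really the truncation step: the prefactor $q^{2g+2}$ is very large, so one genuinely needs the tail to save a factor $q^{-2g+O(g\epsilon)}$, and this is precisely what the uniform divisor bound of Section~3 supplies.
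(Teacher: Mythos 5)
Your proposal is correct and follows essentially the same route as the paper: truncate away the $d(C)>g$ tail into the $O(q^{g\epsilon})$ error (the paper quotes Florea's asymptotic $\sum_{C|f^{\infty},\,C\in\mathbb{A}^+_{\leq g}}|C|^{-2}=\prod_{P|f}(1-|P|^{-2})^{-1}+O(q^{-g(2-\epsilon)})$, which encodes exactly your tail bound), identify the resulting Euler product with $\mathcal{Z}(u)\mathcal{C}(u)$, and apply Perron's formula with the substitution $u\mapsto qu$ in the $M_{k,1}$ cases. The only cosmetic difference is that the paper first simplifies the summand to $\frac{1}{|L|}\prod_{P|L}\frac{|P|}{|P|+1}$ before forming the generating function $\mathcal{A}(u)$, whereas you build the Euler product of the full local data directly; the computations coincide.
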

\begin{remark}
Let 
\begin{equation}\label{eq:4.1}
    M=M_{g,1}-M_{g,2}+M_{g-1,1}-M_{g-1,2}.
\end{equation}
\end{remark}
\begin{remark}
$\mathcal{C}(u)$ is analytic in $|u|<1$. We may further write 
\begin{equation}\label{eq:4.4}
    \mathcal{C}(u)=\mathcal{Z}\left(\frac{u}{q}\right)^{-1}\prod_P\left(1+\frac{u^{d(P)}}{(1+|P|)(|P|-u^{d(P)})}\right)=(1-u)\prod_P\left(1+\frac{u^{d(P)}}{(1+|P|)(|P|-u^{d(P)})}\right)
\end{equation}
which furnishes an analytic continuation of $\mathcal{C}(u)$ to the region $|u|<q$. 
\end{remark}
\begin{proof}[Proof of Proposition 4.1]
From (\ref{eq:3.10}), (\ref{eq:3.11}), (\ref{eq:3.12}) and (\ref{eq:3.13}) and using the facts that (see \cite{Florea2017}, Section 5),
\begin{equation}
    \sum_{\substack{C|f^{\infty}\\C\in\mathbb{A}^+_{\leq g}}}\frac{1}{|C|^2}=\prod_{P|f}(1-|P|^{-2})^{-1}+O(q^{-g(2-\epsilon)})\text{ and }\frac{\phi(L^2)}{|L|^2}=\prod_{P|L}(1-|P|^{-1}),
\end{equation}
we have
\begin{equation}
    M_{g,1}=\frac{q^{2g+2}}{\zeta_{\mathbb{A}}(2)}\sum_{L\in\mathbb{A}^+_{\leq\left[\frac{g}{2}\right]}}\frac{1}{|L|}\prod_{P|L}\frac{|P|}{|P|+1}+O(q^{g\epsilon}),
\end{equation}
\begin{equation}
    M_{g,2}=\frac{q^{\frac{3g+3}{2}}}{\zeta_{\mathbb{A}}(2)}\sum_{L\in\mathbb{A}^+_{\leq \left[\frac{g}{2}\right]}}\prod_{P|L}\frac{|P|}{|P|+1}+O(q^{g\epsilon}),
\end{equation}
\begin{equation}
    M_{g-1,1}=\frac{q^{2g+2}}{\zeta_{\mathbb{A}}(2)}\sum_{L\in\mathbb{A}^+_{\leq \left[\frac{g-1}{2}\right]}}\frac{1}{|L|}\prod_{P|L}\frac{|P|}{|P|+1}+O(q^{g\epsilon})
\end{equation}
and
\begin{equation}
    M_{g-1,2}=\frac{q^{\frac{3g}{2}+2}}{\zeta_{\mathbb{A}}(2)}\sum_{L\in\mathbb{A}^+_{\leq \left[\frac{g-1}{2}\right]}}\prod_{P|L}\frac{|P|}{|P|+1}+O(q^{g\epsilon}).
\end{equation}
Using the function field analogue of Perron's formula (Lemma 2.8), we have 
\begin{equation}\label{eq:4.10}
    M_{g,1}=\frac{q^{2g+2}}{\zeta_{\mathbb{A}}(2)}\frac{1}{2\pi i}\oint_{|u|=r}\frac{\mathcal{A}(u)}{u(1-qu)(qu)^{\left[\frac{g}{2}\right]}}du,
\end{equation}
\begin{equation}\label{eq:4.11}
  M_{g,2}=\frac{q^{\frac{3g+3}{2}}}{\zeta_{\mathbb{A}}(2)}\frac{1}{2\pi i}\oint_{|u|=r}\frac{\mathcal{A}(u)}{u(1-u)u^{\left[\frac{g}{2}\right]}}du,
\end{equation}
\begin{equation}\label{eq:4.12}
    M_{g-1,1}=\frac{q^{2g+2}}{\zeta_{\mathbb{A}}(2)}\frac{1}{2\pi i}\oint_{|u|=r}\frac{\mathcal{A}(u)}{u(1-qu)(qu)^{\left[\frac{g-1}{2}\right]}}du
\end{equation}
and
\begin{equation}\label{eq:4.13}
    M_{g-1,2}=\frac{q^{\frac{3g}{2}+2}}{\zeta_{\mathbb{A}}(2)}\frac{1}{2\pi i}\oint_{|u|=r}\frac{\mathcal{A}(u)}{u(1-u)u^{\left[\frac{g-1}{2}\right]}}du,
\end{equation}
where $r<q^{-1}$ and
\begin{equation}
    \mathcal{A}(u)=\sum_{L\in\mathbb{A}^+}u^{d(L)}\prod_{P}\frac{|P|}{|P|+1}.
\end{equation}
By multiplicativity, we may write
\begin{equation}\label{eq:4.15}
    \mathcal{A}(u)=\prod_P\left(1+\frac{|P|}{|P|+1}\frac{u^{d(P)}}{1-u^{d(P)}}\right)=\mathcal{Z}(u)\mathcal{C}(u)=\frac{\mathcal{C}(u)}{(1-qu)}.
\end{equation}
Inserting (\ref{eq:4.15}) into (\ref{eq:4.10}), (\ref{eq:4.11}), (\ref{eq:4.12}) and (\ref{eq:4.13}) the Proposition follows.
\end{proof}
\section{Contribution from V-square}
Let
\begin{equation}\label{eq:5.1}
        \mathcal{S}(V=\square)=\mathcal{S}^o(V=\square)+\mathcal{S}^e(V=\square)
\end{equation}
where
\begin{equation}\label{eq:5.2}
    \mathcal{S}^o(V=\square)=\mathcal{S}^o_{g,1}(V=\square)-\mathcal{S}^o_{g,2}(V=\square)+\mathcal{S}^o_{g-1,1}(V=\square)-\mathcal{S}^o_{g-1,2}(V=\square)
\end{equation}
and
\begin{equation}\label{eq:5.3}
    S^e(V=\square)=\mathcal{S}^e_{g,1}(V=\square)-\mathcal{S}^e_{g,2}(V=\square)+\mathcal{S}^e_{g-1,1}(V=\square)-\mathcal{S}^e_{g-1,2}(V=\square).
\end{equation}
In this section we will evaluate the term $\mathcal{S}(V=\square)$. The next Proposition is the main result in this section.
\begin{prop}
Using the same notation as before, we have that 
\begin{align}
    \mathcal{S}(V=\square)&=\mathcal{S}_1(V=\square)+\mathcal{S}_2(V=\square)+\mathcal{S}_3(V=\square)+\mathcal{S}_4(V=\square)\nonumber\\&+q^{\frac{2g+2}{3}}\mathcal{R}(2g+2)+q^{\frac{g}{6}+\left[\frac{g}{2}\right]}C_1+q^{\frac{g}{6}+\left[\frac{g-1}{2}\right]}C_2+O(q^{\frac{g}{2}(1+\epsilon)}),
    \end{align}
    where 
    \begin{equation*}
    \mathcal{S}_1(V=\square)=-\frac{q^{2g+2}}{\zeta_{\mathbb{A}}(2)}\frac{1}{2\pi i}\oint_{|u|=R}\frac{\mathcal{C}(u)}{u(1-qu)^2(qu)^{\left[\frac{g}{2}\right]}}du,
    \end{equation*}
    \begin{equation*}
        \mathcal{S}_2(V=\square)=-\frac{q^{2g+2}}{\zeta_{\mathbb{A}}(2)}\frac{1}{2\pi i}\oint_{|u|=R}\frac{\mathcal{C}(u)}{u(1-qu)^2(qu)^{\left[\frac{g-1}{2}\right]}}du, 
    \end{equation*}
\begin{equation*}
    \mathcal{S}_3(V=\square)=\frac{q^{\frac{3g+3}{2}}}{\zeta_{\mathbb{A}}(2)}\frac{1}{2\pi i}\oint_{|u|=R}\frac{\mathcal{C}(u)}{u(1-u)(1-qu)u^{\left[\frac{g}{2}\right]}}du 
\end{equation*}
  and
  \begin{equation*}
    \mathcal{S}_4(V=\square)=\frac{q^{\frac{3g}{2}+2}}{\zeta_{\mathbb{A}}(2)}\frac{1}{2\pi i}\oint_{|u|=R}\frac{\mathcal{C}(u)}{u(1-u)(1-qu)u^{\left[\frac{g-1}{2}\right]}}du,
\end{equation*}
with $1<R<q$ and
\begin{equation*}
    \mathcal{C}(u)=\prod_P\left(1-\frac{u^{d(P)}}{|P|+1}\right).
\end{equation*}
Furthermore $\mathcal{R}$ is a linear polynomial and $C_1$ and $C_2$ are constants that can be explicitly calculated, (see (\ref{eq:5.35}), (\ref{eq:5.36}) and (\ref{eq:5.37})).
\end{prop}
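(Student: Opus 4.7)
The plan is to substitute the explicit evaluation of the Gauss sum $G(W^2,\chi_f)$ for each square $V=W^2$, to convert the resulting multiple sums into contour integrals via Perron's formula, and then to shift contours from $|u|=r<1/q$ outward to $|u|=R$ with $1<R<q$, collecting the residues that produce the stated secondary main terms.

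First, in each of the eight contributions appearing in (\ref{eq:5.2}) and (\ref{eq:5.3}), I would write $V=W^2$ and apply Lemma 2.5 of \cite{Florea2017} to express $G(W^2,\chi_f)/\sqrt{|f|}$ as a multiplicative function of $f$ depending on the prime factors of $W$. After swapping the orders of summation so that $W$ becomes outermost, the sum over $f$---subject to the degree bound inherited from Section 3 and the coupled $C\mid f^{\infty}$ summation---factors as an Euler product. Combining this Euler product with the $C$-sum $\sum_{C\mid f^{\infty}}|C|^{-2}$ exactly as in the proof of Proposition 4.1 yields the factor $\mathcal{Z}(u)\mathcal{C}(u)=\mathcal{C}(u)/(1-qu)$ in a Perron contour variable $u$, together with a finite correction depending on the prime divisors of $W$.

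Next, I would apply the function field Perron formula (Lemma 2.8) to record the $W$-sum as a generating function in $u$. Because $d(V)=2d(W)$ is always even, the $W$-summation inserts a factor with a cube-root-type pole (at a point of the form $|u|=q^{-\alpha}$ with $3\alpha\in\{1,2\}$) in the inner variable. Each of the eight contributions $\mathcal{S}^{o}_{k,\ell}(V=\square)$ and $\mathcal{S}^{e}_{k,\ell,j}(V=\square)$ then acquires a contour integral representation on $|u|=r<1/q$, with integrand built from $\mathcal{C}(u)$, polar factors $(1-u)^{-1}$ and $(1-qu)^{-k}$, a $u^{-[g/2]}$ or $u^{-[(g-1)/2]}$ factor, and the cube-root polar piece. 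I would then shift the contour outward from $|u|=r$ to $|u|=R$, remaining inside the domain of analyticity of $\mathcal{C}(u)$ furnished by (\ref{eq:4.4}). The residues at $u=1/q$ reassemble into the four integrals $\mathcal{S}_{1},\mathcal{S}_{2},\mathcal{S}_{3},\mathcal{S}_{4}$ listed in the statement (they will combine in (\ref{firsteq}) with the main term $M$ from Proposition 4.1 to produce the leading logarithmic main term of the L-function moment). The residues at the cube-root pole yield the secondary term $q^{(2g+2)/3}\mathcal{R}(2g+2)$, with $\mathcal{R}$ a linear polynomial determined by the local Taylor expansions of $\mathcal{C}$ and $\mathcal{Z}$ there. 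The residues at $u=1$, occurring only in the $\mathcal{S}^{e}_{k,2,j}$ pieces (those carrying the prefactor $q^{-(g+1)/2}$ or $q^{-g/2}$), produce the two constants $C_{1},C_{2}$ and the powers $q^{g/6+[g/2]}$ and $q^{g/6+[(g-1)/2]}$, where the fractional exponents come directly from the Perron upper limits $[g/2]$ and $[(g-1)/2]$ in (\ref{eq:3.10})--(\ref{eq:3.13}). The remaining integral along $|u|=R$ is bounded by $O(q^{g/2(1+\epsilon)})$ using the analyticity and polynomial size of $\mathcal{C}$ on $|u|<q$.

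The principal obstacle is the residue calculation at the two pole locations that are absent from the imaginary quadratic setting of \cite{Florea2017}. The contributions $C_{1}q^{g/6+[g/2]}$ and $C_{2}q^{g/6+[(g-1)/2]}$ are a genuine feature of the real quadratic family, forced by the Perron limits $[g/2]$ and $[(g-1)/2]$ appearing in the four quantities $M_{g,1},M_{g,2},M_{g-1,1},M_{g-1,2}$ but having no counterpart in the imaginary case where only one main-term integral is present. Isolating the exact constants $C_{1},C_{2}$ and the polynomial $\mathcal{R}$ requires careful bookkeeping of how the odd-degree sum $\mathcal{S}^{o}_{g,1}(V=\square)-\mathcal{S}^{o}_{g,2}(V=\square)+\mathcal{S}^{o}_{g-1,1}(V=\square)-\mathcal{S}^{o}_{g-1,2}(V=\square)$ pairs with its even-degree analogue under the contour shift, and this interaction is the source of the explicit formulas (\ref{eq:5.35})--(\ref{eq:5.37}).
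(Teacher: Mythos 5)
Your overall strategy---evaluate the Gauss sums at square $V=l^2$, package the $f$- and $l$-sums into generating functions, apply Perron's formula, and shift contours to collect residues---is indeed the skeleton of the paper's argument, which follows Florea. But there are two concrete problems with your account.

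First, you misidentify the source of the terms $C_1q^{\frac{g}{6}+\left[\frac{g}{2}\right]}$ and $C_2q^{\frac{g}{6}+\left[\frac{g-1}{2}\right]}$. You attribute them to residues at $u=1$ in the pieces carrying the prefactors $q^{-\frac{g+1}{2}}$ and $q^{-\frac{g}{2}}$. This cannot work: at $u=1$ the factor $u^{-\left[\frac{g}{2}\right]}$ equals $1$, so a residue there produces a power $q^{\frac{3g}{2}}$ from the prefactor alone, not $q^{\frac{g}{6}+\left[\frac{g}{2}\right]}\approx q^{\frac{2g}{3}}$; moreover $\mathcal{C}(1)=0$ kills the would-be simple pole at $u=1$ wherever a factor $(1-u)^{-1}$ appears. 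In the paper these two terms come from exactly the same ``cube-root'' pole as $\mathcal{R}$: after taking the residue at $w=qz$ in the two-variable generating function $\mathcal{B}(z,w)=\mathcal{Z}(z)\mathcal{Z}(w)\mathcal{Z}(qw^2z)\prod_P\mathcal{B}_P(z,w)$, one enlarges the $z$-contour past $z=q^{-\frac{4}{3}}$; the $\mathcal{B}^j_{k,1}$ pieces see a double pole there (giving the linear polynomial $\mathcal{R}$), while the $\mathcal{B}^j_{k,2}$ pieces see a simple pole, and the exponent $\frac{g}{6}=\frac{3g}{2}-\frac{4g}{3}$ arises from combining the prefactor $q^{\frac{3g}{2}}$ with $z^{g}$ at $z=q^{-\frac{4}{3}}$. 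This is visible in the formulas (\ref{eq:5.36}) and (\ref{eq:5.37}), which involve $\prod_P\mathcal{D}_P(q^{-\frac{4}{3}},q^{-\frac{1}{3}})$. Note also that both the even- and odd-degree-$f$ sums contribute: the paper has $C_1=C_1^e+C_2^o$ and $C_2=C_1^o+C_2^e$, so these terms do not occur ``only in the $\mathcal{S}^e_{k,2,j}$ pieces'' as you claim.

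Second, you assert that ``the residues at $u=1/q$ reassemble into the four integrals $\mathcal{S}_1,\dots,\mathcal{S}_4$'' without justification. The residue at $w=q^{-1}$ applied to the eight double integrals produces, after the substitution $z=(qu)^{-1}$, considerably more than four pieces; the paper must split several of them using $q-\frac{1}{z}+\frac{1}{z}-1$, invoke the identity $1+q^{\frac{1}{2}}=q^{-\frac{g-1}{2}+\left[\frac{g}{2}\right]}+q^{-\frac{g}{2}+\left[\frac{g-1}{2}\right]+1}$ to pair odd-$f$ with even-$f$ contributions, and then prove by an induction on $g$ (carried out in the appendix) that all the leftover residues at $u=0$, $u=q^{-1}$ and $u=q^{-2}$ cancel identically. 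This cancellation is a substantial part of the proof of Proposition 5.1 and is absent from your outline.
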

\par\noindent
Before we prove Proposition 5.1, we need the following notation and subsequent results. For $|z|>q^{-2}$, let
\begin{equation*}
    \mathcal{B}(z,w)=\sum_{f\in\mathbb{A}^+}w^{d(f)}A_f(z)\prod_{P|f}(1-|P|^{-2}z^{-d(P)})^{-1},
\end{equation*}
where
\begin{equation*}
    A_f(z)=\sum_{l\in\mathbb{A}^+}z^{d(l)}\frac{G(l^2,\chi_f)}{\sqrt{|f|}}.
\end{equation*}
Then we have the following results.
\begin{lemma}
For $|z|>q^{-2}$, we have 
\begin{equation}\label{eq:5.5}
    \mathcal{B}(z,w)=\mathcal{Z}(z)\mathcal{Z}(w)\mathcal{Z}(qw^2z)\prod_P\mathcal{B}_P(z,w),
\end{equation}
where
\begin{equation*}
    \mathcal{B}_P(z,w)=1+\frac{w^{d(P)}-(zw^2)^{d(P)}|P|^2-(z^2w)^{d(P)}|P|^2+(z^2w^3)^{d(P)}|P|^2+(zw^2)^{d(P)}|P|-(zw^3)^{d(P)}|P|}{z^{d(P)}|P|^2-1}.
\end{equation*}
Moreover $\prod_P\mathcal{B}_P(z,w)$ converges absolutely for $|w|<q|z|, |w|<q^{-\frac{1}{2}}$ and $|wz|<q^{-1}$.
\end{lemma}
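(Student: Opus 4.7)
The plan is to derive the identity by expressing $\mathcal{B}(z,w)$ as an Euler product and reading off the stated factorisation. First, I would factor the inner sum $A_f(z)$. Since the generalised Gauss sum $G(V,\chi_f)/\sqrt{|f|}$ is multiplicative in the prime-power decomposition of $f$ (coprime primes contribute independent Gauss sums by the function-field CRT), and since restricting the $l$-sum to prescribed $P$-adic valuations gives a multiplicative structure in $l$ as well, the series $A_f(z)$ factors as a product over $P \mid f$ of explicit local pieces times a ``prime-to-$f$'' $l$-sum that contributes the global factor $\mathcal{Z}(z) = (1-qz)^{-1}$. At a prime $P$ with $v_P(f) = a$, the local evaluation of $G(P^{2b},\chi_{P^a})/|P|^{a/2}$ for $b = v_P(l)$ reduces to standard quadratic Gauss-sum formulas over $\mathbb{F}_q[x]$, which vanish unless $2b \geq a - 1$ and otherwise produce explicit contributions controlled by $\phi(P^a)/|P|^{a/2}$.

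Second, I would incorporate the factor $\prod_{P \mid f}(1 - |P|^{-2} z^{-d(P)})^{-1}$ and sum over monic $f$ weighted by $w^{d(f)}$. The sum over $f$ contributes $(1 - w^{d(P)})^{-1}$ at each $P$ together with the local Gauss-sum data, producing a single Euler product for $\mathcal{B}(z,w)$. The two remaining principal-part factors $\mathcal{Z}(w)$ and $\mathcal{Z}(qw^2 z)$ are then identified by pulling from each local factor the poles at $w^{d(P)} = |P|^{-1}$ and at $(qw^2 z)^{d(P)} = |P|^{-1}$ respectively; the first arises from the trivial $l = 1$ contribution to the $f$-sum, while the second comes from the diagonal contribution where $l$ and $f$ share a prime and the associated local Gauss sum is maximised (the size bound $|G(l^2,\chi_f)/\sqrt{|f|}|$ scales like $|l|$ on this diagonal, yielding the scaling $qw^2 z$). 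After these three zeta factors are extracted, what remains at each prime is precisely the rational expression $\mathcal{B}_P(z,w)$ claimed, which can be verified by matching numerator coefficients term by term.

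Finally, to establish absolute convergence of $\prod_P \mathcal{B}_P(z,w)$, I would estimate $\mathcal{B}_P(z,w) - 1$ monomial by monomial in the given region. Under the hypotheses $|w| < q|z|$, $|w| < q^{-1/2}$, and $|wz| < q^{-1}$, each of the six monomials in the numerator of $\mathcal{B}_P(z,w) - 1$ is dominated by a constant times $q^{-d(P)(1+\delta)}$ for some $\delta > 0$, while the denominator $z^{d(P)} |P|^2 - 1$ has magnitude bounded below on the order of $|P|^2 |z|^{d(P)}$ for large $d(P)$, which is legitimate thanks to the standing assumption $|z| > q^{-2}$. Summing the resulting bound over monic irreducible $P$ via the prime polynomial theorem $|\{P : d(P) = n\}| \leq q^n/n$ gives absolute convergence.

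The main obstacle will be the local Gauss-sum bookkeeping at primes dividing both $f$ and $l$: one must distinguish the cases $2b < a-1$, $2b = a-1$, $2b = a$, and $2b > a$ for $G(P^{2b},\chi_{P^a})$, and then sum geometric series in both $b$ and $a$ simultaneously so that the three ``polar'' contributions cleanly peel off. Once this local calculation is carried out correctly, identifying the factors $\mathcal{Z}(z)$, $\mathcal{Z}(w)$, $\mathcal{Z}(qw^2 z)$ and the residual rational function $\mathcal{B}_P(z,w)$ is a matter of algebraic rearrangement, and the convergence estimate is routine.
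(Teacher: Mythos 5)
The paper offers no proof of this lemma beyond the citation ``See Florea, Lemma 6.2,'' and your outline is essentially a faithful reconstruction of that argument: the same use of multiplicativity of $G(V,\chi_f)$ in $f$ (valid since $q\equiv 1\pmod 4$), the same local case analysis of $G(P^{2b},\chi_{P^a})$ according to whether $2b\geq a$, $2b=a-1$ or $2b<a-1$, the same identification of the three zeta factors from the prime-to-$f$ part of the $l$-sum, the square-free/$l$ small contribution, and the diagonal where $\phi(L^2)/|L|$ produces the $(qzw^2)^{d(L)}$ scaling, and the same term-by-term estimate of the six monomials in $\mathcal{B}_P(z,w)-1$ for absolute convergence. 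Your approach is therefore correct and the same as the (cited) proof; the only thing left implicit is the explicit evaluation of the local Gauss sums, which you correctly flag as the remaining bookkeeping.
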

\begin{proof}
See \cite{Florea2017}, Lemma 6.2.
\end{proof}
\begin{lemma}
We have
\begin{equation}\label{eq:5.6}
    \prod_P\mathcal{B}_P(z,w)=\mathcal{Z}\left(\frac{w}{q^2z}\right)\mathcal{Z}(w^2)^{-1}\prod_P\mathcal{D}_P(z,w),
\end{equation}
where
\begin{align*}
    &\mathcal{D}_P(z,w)\\&=1+\frac{-w^{2d(P)}-\frac{w^{3d(P)}}{|P|}+\frac{w^{d(P)}}{z^{d(P)}|P|^2}+(zw^2)^{d(P)}|P|+(zw^2)^{d(P)}-(z^2w)^{d(P)}|P|^2+(zw^3)^{d(P)}-(z^2w^2)^{d(P)}|P|^2}{(z^{d(P)}|P|^2-1)(1+w^{d(P)})}.
\end{align*}
Moreover $\mathcal{D}_P(z,w)$ converges absolutely for $|w|^2<q|z|, |w|<q^3|z|^2, |w|<1$ and $|wz|<q^{-1}$. 
\end{lemma}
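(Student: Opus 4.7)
The plan is to establish the identity locally at each prime $P$ and then pass to the Euler product. The key algebraic claim at each monic irreducible $P$ is
\begin{equation*}
    \mathcal{B}_P(z,w) \;=\; \frac{1 - w^{2d(P)}}{1 - w^{d(P)}/(|P|^2 z^{d(P)})}\,\mathcal{D}_P(z,w),
\end{equation*}
which, after setting $x = w^{d(P)}$, $y = z^{d(P)}$ and $p = |P|$, becomes an elementary identity between two rational functions in $(x,y,p)$. I would verify it by clearing denominators so that both sides acquire the common denominator $(yp^2 - 1)(1+x)$, expanding the resulting polynomials, and matching them coefficient by coefficient. The explicit eight-term numerator of $\mathcal{D}_P(z,w)-1$ displayed in the statement is precisely what is produced by this manipulation; in particular the factor $1 - x/(yp^2)$ in the numerator of the multiplier is what introduces the $1/(z^{d(P)}|P|^2)$ term, and the factor $(1-x^2)^{-1} = (1-x)^{-1}(1+x)^{-1}$ accounts for the $(1+w^{d(P)})$ denominator in $\mathcal{D}_P$.

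Once the local identity is in hand, taking the product over all $P$ and using the Euler product representation $\mathcal{Z}(u) = \prod_P(1-u^{d(P)})^{-1}$ immediately yields the claimed factorisation. Indeed $\prod_P(1 - w^{2d(P)})^{-1} = \mathcal{Z}(w^2)$, and since $|P|^2 z^{d(P)} = (q^2 z)^{d(P)}$, we have $\prod_P(1 - w^{d(P)}/(|P|^2 z^{d(P)}))^{-1} = \mathcal{Z}(w/(q^2 z))$. Rearranging these two products yields the prefactor $\mathcal{Z}(w/(q^2z))\,\mathcal{Z}(w^2)^{-1}$ demanded by the lemma.

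For the absolute convergence of $\prod_P \mathcal{D}_P(z,w)$, I would write $\mathcal{D}_P(z,w) = 1 + E_P(z,w)$ and bound the eight contributions to the numerator of $E_P$ individually. After dividing each by the leading $z^{d(P)}|P|^2$ from the denominator, each summand has the form $c_P\,\alpha^{d(P)}$ for an explicit $\alpha$ depending on $z,w,q$. The four conditions $|w|^2 < q|z|$, $|w| < q^3|z|^2$, $|w| < 1$ and $|wz| < q^{-1}$ are exactly what is required to force $|\alpha| < q^{-1-\delta}$ for each of the eight terms; combined with the prime-counting estimate that the number of monic irreducibles of degree $n$ is $O(q^n/n)$, this gives $\sum_P |E_P(z,w)| < \infty$ and hence absolute convergence.

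The main obstacle is the algebraic verification in the first step: the numerator of $\mathcal{D}_P-1$ has eight monomial terms of differing sign, and matching them against the expansion of $\mathcal{B}_P\cdot(1 - w^{d(P)}/(|P|^2z^{d(P)}))/(1-w^{2d(P)})$ requires careful bookkeeping. This step is mechanical but error-prone, so I would perform it symbolically (or cross-check with computer algebra) to avoid dropped signs or mismatched exponents. Everything else in the argument is either a direct appeal to the Euler product or a term-by-term geometric series bound.
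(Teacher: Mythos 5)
Your proposal is correct and is essentially the argument behind the paper's (purely cited) proof: the paper simply refers to Florea's Lemma~6.3, and your local factorisation $\mathcal{B}_P(z,w)=\frac{1-w^{2d(P)}}{1-w^{d(P)}/(|P|^2z^{d(P)})}\mathcal{D}_P(z,w)$ is exactly the identity underlying that reference --- writing $x=w^{d(P)}$, $y=z^{d(P)}$, $p=|P|$ and clearing denominators, both sides reduce to the same polynomial, and the Euler-product step $\prod_P(1-w^{d(P)}/(|P|^2z^{d(P)}))^{-1}=\mathcal{Z}(w/(q^2z))$, $\prod_P(1-w^{2d(P)})=\mathcal{Z}(w^2)^{-1}$ is immediate. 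Your convergence analysis is also right: of the eight terms in the numerator of $\mathcal{D}_P-1$, after dividing by the dominant $z^{d(P)}|P|^2$ the four slowest-decaying ones are governed by $(w^2/(q^2z))^{d(P)}$, $(w/(q^4z^2))^{d(P)}$, $(w^2/q)^{d(P)}$ and $(zw)^{d(P)}$, which correspond precisely to the four stated conditions, the remaining terms being dominated by these.
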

\begin{proof}
See \cite{Florea2017}, Lemma 6.3
\end{proof}
\par\noindent
\textbf{Outline of the Proof of Proposition 5.1:} From the Poisson summation formula the sum over square polynomials $V$ will occur when the degree of $f$ is even and when the degree of $f$ is odd. In the next two subsections, we will find two integrals for each $\mathcal{S}^i_{k,\ell}(V=\square)$ corresponding to simple poles at $w=q^{-1}$ and $w=qz$. In the third subsection we will manipulate the integrals corresponding to the pole at $w=q^{-1}$, similar to that done in section 6, \cite{Florea2017}, which will yield the main terms. In the final subsection, we will evaluate the integrals corresponding to the pole at $w=qz$, which will yield the secondary main terms. 
\subsection{Degree $f$ even}
In this subsection, we prove the following result.
\begin{lemma}
We have
\begin{align}
    \mathcal{S}^e(V=\square)=\mathcal{A}^e_{g,1}-\mathcal{A}^e_{g,2}+\mathcal{A}^e_{g-1,1}-\mathcal{A}^e_{g-1,2}+\mathcal{B}^e_{g,1}-\mathcal{B}^e_{g,2}+\mathcal{B}^e_{g-1,1}-\mathcal{B}^e_{g-1,2}+O(q^{\frac{g}{2}(1+\epsilon)}),
\end{align}
where $\mathcal{A}^e_{k,\ell}$ and $\mathcal{B}^e_{k,\ell}$ are the integrals stated at the end of the subsection.
\end{lemma}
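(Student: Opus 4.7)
The plan is to rewrite each $\mathcal{S}^e_{k,\ell}(V=\square)$ as a double contour integral involving the generating function $\mathcal{B}(z,w)$ introduced above Lemma 5.2, and then to shift the $w$-contour outward, picking up the two simple poles at $w=q^{-1}$ (giving the $\mathcal{A}^e_{k,\ell}$ integrals) and at $w=qz$ (giving the $\mathcal{B}^e_{k,\ell}$ integrals).

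First I would specialise $V=\ell^2$ in the inner sums of (\ref{eq:3.18}) and (\ref{eq:3.22}), and express the resulting sum of $G(\ell^2,\chi_f)/\sqrt{|f|}$ over $\ell$ of the degree forced by the outer constraint (which depends on $d(f)$ and $d(C)$) as a contour integral in an auxiliary variable $z$ of $A_f(z)$, via the function-field Perron formula (Lemma 2.8). Next, I would interchange summations to bring out $\sum_{C|f^\infty,\,C\in\mathbb{A}^+_{\leq g}}|C|^{-2}$ and replace this truncated sum by the full Euler product $\prod_{P|f}(1-|P|^{-2})^{-1}$; the per-$f$ truncation error of size $O(q^{-g(2-\epsilon)})$ from Section 4 is absorbed by the overall prefactor $q^{2g+2}$ or $q^{(3g+3)/2}$ into an admissible $O(q^{g(1+\epsilon)/2})$. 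Applying Perron a second time in a variable $w$ to handle the sum over $f\in\mathbb{A}^+_{\leq k}$ of even degree, weighted by $|f|^{-1}$ or $|f|^{-1/2}$, and recognising the combined Dirichlet series as $\mathcal{B}(z,w)$ (using the factorisation $A_f(z)\prod_{P|f}(1-|P|^{-2}z^{-d(P)})^{-1}$ from the definition preceding Lemma 5.2), produces a double contour integral whose integrand is $\mathcal{B}(z,w)$ times explicit rational factors in $z$ and $w$, together with a negative power of $w$ of order $g$ coming from the degree-truncation $d(f)\leq k$.

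I would then apply Lemma 5.2 to write $\mathcal{B}(z,w)=\mathcal{Z}(z)\mathcal{Z}(w)\mathcal{Z}(qw^2z)\prod_P\mathcal{B}_P(z,w)$ and Lemma 5.3 to replace $\prod_P\mathcal{B}_P(z,w)$ by $\mathcal{Z}(w/(q^2z))\mathcal{Z}(w^2)^{-1}\prod_P\mathcal{D}_P(z,w)$, with $\prod_P\mathcal{D}_P$ analytic on an enlarged polydisc. Deforming the $w$-contour from its initial small radius outward to a circle of radius just below $q^{-1/2}$ (chosen inside the joint region of absolute convergence of $\prod_P\mathcal{D}_P$ and of the remaining factors), one crosses only the simple poles $w=q^{-1}$ of $\mathcal{Z}(w)$ and $w=qz$ of $\mathcal{Z}(w/(q^2z))$. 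The residue at $w=q^{-1}$ is, by definition, the $\mathcal{A}^e_{k,\ell}$ integral, and the residue at $w=qz$ is $\mathcal{B}^e_{k,\ell}$; summing the four $(k,\ell)$ contributions with the signs prescribed by (\ref{eq:5.3}) then yields the identity claimed in the lemma, modulo the integral along the new outer $w$-contour.

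The main obstacle is bounding this remainder integral by $O(q^{g(1+\epsilon)/2})$. The key observation is that on $|w|=q^{-1/2+\epsilon}$ the factor $w^{-b_{k,\ell}}$, with $b_{k,\ell}$ of order $g$, contributes exactly $q^{g(1+\epsilon)/2}$, while the remainder of the integrand is uniformly bounded by the convergence estimates of Lemmas 5.2 and 5.3 (together with the integration in $z$ over a fixed small circle). A secondary technical point is bookkeeping: the four $(k,\ell)$ configurations correspond to four distinct pairs of degree shifts $(2g+2-2d(C),\,2g-2d(C))$ inside $S^e_1$ and $S^e_2$, coupled with the two truncations $f\in\mathbb{A}^+_{\leq g}$ and $f\in\mathbb{A}^+_{\leq g-1}$, and these parity-dependent shifts determine the precise exponents that appear in the definitions of $\mathcal{A}^e_{k,\ell}$ and $\mathcal{B}^e_{k,\ell}$ used in the subsequent subsections.
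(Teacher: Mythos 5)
Your proposal follows the paper's argument essentially step for step: specialise $V=\ell^2$, apply the function-field Perron formula twice to obtain a double contour integral in $(z,w)$ against $\mathcal{B}(z,w)$, factor via Lemmas 5.2 and 5.3, and shift the $w$-contour outward so that the residues at the simple poles $w=q^{-1}$ and $w=qz$ produce the $\mathcal{A}^e_{k,\ell}$ and $\mathcal{B}^e_{k,\ell}$ integrals, with the leftover integral bounded by $O(q^{\frac{g}{2}(1+\epsilon)})$ (note only (\ref{eq:3.18}) contributes here, since for $d(f)$ even the $V$ in (\ref{eq:3.22}) has odd degree and cannot be a square). The one detail to make explicit is that the $z$-contour must first be shrunk --- the paper moves it to $|z|=q^{-\frac{3}{2}}$, crossing no poles --- so that $|qz|=q^{-\frac{1}{2}}$ and the pole at $w=qz$ genuinely lies inside the enlarged $w$-contour; with $|z|=q^{-1-\epsilon}$ left in place and $|w|$ taken only marginally past $q^{-\frac{1}{2}}$, as your radii suggest, that pole would not be captured and the $\mathcal{B}^e_{k,\ell}$ terms would be lost.
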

\begin{proof}
From (\ref{eq:3.18}) and using the function field analogue of Perron's formula, we obtain
\begin{equation*}
    S^e_1(l^2;f,C)=\frac{1}{2\pi i}\oint_{|z|=q^{-1-\epsilon}}\frac{z^g(q-1)(qz-1)A_f(z)}{q(1-z)z^{\frac{d(f)}{2}+d(C)}}dz.
\end{equation*}
Also, using the fact that (see \cite{Florea2017}, Proof of Lemma 6.1),
\begin{equation*}
    \sum_{\substack{C|f^{\infty}\\C\in\mathbb{A}^+_{\leq g}}}\frac{1}{|C|^2z^{d(C)}}=\prod_{P|f}(1-z^{-d(P)}|P|^{-2})^{-1}+O(q^{g(\epsilon-1)}),
\end{equation*}
we have 
\begin{equation*}
    \mathcal{S}^e_{g,1}(V=\square)=\frac{q^{2g+2}}{2\pi i}\oint_{|z|=q^{-1-\epsilon}}\frac{z^g(q-1)(qz-1)}{q(1-z)}H^e_{g,1}(z)dz+O(q^{\frac{g}{2}(1+\epsilon)}),
\end{equation*}
where 
\begin{equation*}
    H^e_{g,1}(z)=\sum_{\substack{f\in\mathbb{A}^+_{\leq g}\\d(f)\text{ even}}}\frac{A_f(z)}{|f|z^{\frac{d(f)}{2}}}\prod_{P|f}(1-|P|^{-2}z^{-d(P)})^{-1}.
\end{equation*}
Similarly we have 
\begin{equation*}
    \mathcal{S}_{g,2}^e(V=\square)=\frac{q^{\frac{3g+3}{2}}}{2\pi i}\oint_{|z|=q^{-1-\epsilon}}\frac{z^g(q-1)(qz-1)}{q(1-z)}H_{g,2}^e(z)dz+O(q^{\frac{g}{2}(1+\epsilon)}),
\end{equation*}
\begin{equation*}
    \mathcal{S}^e_{g-1,1}(V=\square)=\frac{q^{2g+2}}{2\pi i}\oint_{|z|=q^{-1-\epsilon}}\frac{z^g(q-1)(qz-1)}{q(1-z)}H_{g-1,1}^e(z)dz+O(q^{\frac{g}{2}(1+\epsilon)})
\end{equation*}
and
\begin{equation*}
   \mathcal{S}^e_{g-1,2}(V=\square)=\frac{q^{\frac{3g}{2}+2}}{2\pi i}\oint_{|z|=q^{-1-\epsilon}}\frac{z^g(q-1)(qz-1)}{q(1-z)}H_{g-1,2}^e(z)dz+O(q^{\frac{g}{2}(1+\epsilon)}),
\end{equation*}
where
\begin{equation*}
    H_{g,2}^e(z)=\sum_{\substack{f\in\mathbb{A}^+_{\leq g}\\d(f)\text{ even}}}\frac{A_f(z)}{\sqrt{|f|}z^{\frac{d(f)}{2}}}\prod_{P|f}(1-|P|^{-2}z^{-d(P)})^{-1},
\end{equation*}
\begin{equation*}
    H_{g-1,1}^e(z)=\sum_{\substack{f\in\mathbb{A}^+_{\leq g-1}\\d(f)\text{ even}}}\frac{A_f(z)}{|f|z^{\frac{d(f)}{2}}}\prod_{P|f}(1-|P|^{-2}z^{-d(P)})^{-1}
\end{equation*}
and
\begin{equation*}
    H^e_{g-1,2}(z)=\sum_{\substack{f\in\mathbb{A}^+_{\leq g-1}\\d(f)\text{ even}}}\frac{A_f(z)}{\sqrt{|f|}z^{\frac{d(f)}{2}}}\prod_{P|f}(1-|P|^{-2}z^{-d(P)})^{-1}.
\end{equation*}
Using the function field analogue of Perron's formula, we have
\begin{equation}\label{eq:5.7}
    H^e_{g,1}(z)=\frac{1}{2\pi i}\oint_{|w|=r_2}\frac{\mathcal{B}(z,w)}{w(1-q^2zw^2)(q^2zw^2)^{\left[\frac{g}{2}\right]}}dw-\frac{1}{2\pi i}\oint_{|w|=r_2}\frac{q^2zw\mathcal{B}(z,w)}{1-q^2zw^2}dw,
\end{equation}
\begin{equation}\label{eq:5.8}
    H_{g,2}^e(z)=\frac{1}{2\pi i}\oint_{|w|=r_2}\frac{\mathcal{B}(z,w)}{w(1-qzw^2)(qzw^2)^{\left[\frac{g}{2}\right]}}dw-\frac{1}{2\pi i}\oint_{|w|=r_2}\frac{qzw\mathcal{B}(z,w)}{1-qzw^2}dw,
\end{equation}
\begin{equation}\label{eq:5.9}
    H_{g-1,1}^e(z)=\frac{1}{2\pi i}\oint_{|w|=r_2}\frac{\mathcal{B}(z,w)}{w(1-q^2zw^2)(q^2zw^2)^{\left[\frac{g-1}{2}\right]}}dw-\frac{1}{2\pi i}\oint_{|w|=r_2}\frac{q^2zw\mathcal{B}(z,w)}{1-q^2zw^2}dw
\end{equation}
and
\begin{equation}\label{eq:5.10}
    H_{g-1,2}^e(z)=\frac{1}{2\pi i}\oint_{|w|=r_2}\frac{\mathcal{B}(z,w)}{w(1-qzw^2)(qzw^2)^{\left[\frac{g-1}{2}\right]}}dw-\frac{1}{2\pi i}\oint_{|w|=r_2}\frac{qzw\mathcal{B}(z,w)}{1-qzw^2}dw.
\end{equation}
The second integrals in (\ref{eq:5.7}), (\ref{eq:5.8}), (\ref{eq:5.9}) and (\ref{eq:5.10}) are zero since the integrands have no poles inside the circle $|w|=r_2<q^{-1}$. Therefore we have
\begin{equation*}
    \mathcal{S}_{g,1}^e(V=\square)=\frac{q^{2g+2}}{(2\pi i)^2}\oint_{|z|=q^{-1-\epsilon}}\oint_{|w|=r_2}\frac{z^g(q-1)(qz-1)\mathcal{B}(z,w)}{qw(1-z)(1-q^2zw^2)(q^2zw^2)^{\left[\frac{g}{2}\right]}}dwdz+O(q^{\frac{g}{2}(1+\epsilon)}),
\end{equation*}
\begin{equation*}
    \mathcal{S}^e_{g,2}(V=\square)=\frac{q^{\frac{3g+3}{2}}}{(2\pi i)^2}\oint_{|z|=q^{-1-\epsilon}}\oint_{|w|=r_2}\frac{z^g(q-1)(qz-1)\mathcal{B}(z,w)}{qw(1-z)(1-qzw^2)(qzw^2)^{\left[\frac{g}{2}\right]}}dwdz+O(q^{\frac{g}{2}(1+\epsilon)}),
\end{equation*}
\begin{equation*}
    \mathcal{S}^e_{g-1,1}(V=\square)=\frac{q^{2g+2}}{(2\pi i)^2}\oint_{|z|=q^{-1-\epsilon}}\oint_{|w|=r_2}\frac{z^g(q-1)(qz-1)\mathcal{B}(z,w)}{qw(1-z)(1-q^2zw^2)(q^2zw^2)^{\left[\frac{g-1}{2}\right]}}dwdz+O(q^{\frac{g}{2}(1+\epsilon)})
\end{equation*}
and
\begin{equation*}
    \mathcal{S}_{g-1,2}^e(V=\square)=\frac{q^{\frac{3g}{2}+2}}{(2\pi i)^2}\oint_{|z|=q^{-1-\epsilon}}\oint_{|w|=r_2}\frac{z^g(q-1)(qz-1)\mathcal{B}(z,w)}{qw(1-z)(1-qzw^2)(qzw^2)^{\left[\frac{g-1}{2}\right]}}dwdz+O(q^{\frac{g}{2}(1+\epsilon)}).
\end{equation*}
Using equation (\ref{eq:5.5}) in Lemma 5.2 we obtain
\begin{equation*}
    \mathcal{S}^e_{g,1}(V=\square)=-\frac{q^{2g+2}}{(2\pi i)^2}\oint_{|z|=q^{-1-\epsilon}}\oint_{|w|=r_2}\frac{z^g(q-1)\prod_P\mathcal{B}_P(z,w)}{qw(1-z)(1-qw)(1-q^2zw^2)^2(q^2zw^2)^{\left[\frac{g}{2}\right]}}dwdz+O(q^{\frac{g}{2}(1+\epsilon)}),
\end{equation*}
\begin{align*}
    \mathcal{S}^e_{g,2}(V=\square)&=-\frac{q^{\frac{3g+3}{2}}}{(2\pi i)^2}\oint_{|z|=q^{-1-\epsilon}}\oint_{|w|=r_2}\frac{z^g(q-1)\prod_P\mathcal{B}_P(z,w)}{qw(1-z)(1-qw)(1-qzw^2)(1-q^2zw^2)(qzw^2)^{\left[\frac{g}{2}\right]}}dwdz\\&+O(q^{\frac{g}{2}(1+\epsilon)}),
\end{align*}
\begin{equation*}
    \mathcal{S}_{g-1,1}^e(V=\square)=-\frac{q^{2g+2}}{(2\pi i)^2}\oint_{|z|=q^{-1-\epsilon}}\oint_{|w|=r_2}\frac{z^g(q-1)\prod_P\mathcal{B}_P(z,w)}{qw(1-z)(1-qw)(1-q^2zw^2)^2(q^2zw^2)^{\left[\frac{g-1}{2}\right]}}dwdz+O(q^{\frac{g}{2}(1+\epsilon)})
\end{equation*}
and
\begin{align*}
    \mathcal{S}_{g-1,2}^e(V=\square)&=-\frac{q^{\frac{3g}{2}+2}}{(2\pi i)^2}\oint_{|z|=q^{-1-\epsilon}}\oint_{|w|=r_2}\frac{z^g(q-1)\prod_P\mathcal{B}_P(z,w)}{qw(1-z)(1-qw)(1-qzw^2)(1-q^2zw^2)(qzw^2)^{\left[\frac{g-1}{2}\right]}}dwdz\\&+O(q^{\frac{g}{2}(1+\epsilon)}),
\end{align*}
Using equation (\ref{eq:5.6}) in Lemma 5.3, we obtain
\begin{align*}
    \mathcal{S}^e_{g,1}(V=\square)&=-\frac{q^{2g+2}}{(2\pi i)^2}\oint_{|z|=q^{-1-\epsilon}}\oint_{|w|=r_2}\frac{z^g(q-1)(1-qw^2)\prod_P\mathcal{D}_P(z,w)}{qw(1-z)(1-qw)(1-\frac{w}{qz})(1-q^2zw^2)^2(q^2zw^2)^{\left[\frac{g}{2}\right]}}dwdz\\&+O(q^{\frac{g}{2}(1+\epsilon)}),
\end{align*}
\begin{align*}
    \mathcal{S}^e_{g,2}(V=\square)&=-\frac{q^{\frac{3g+3}{2}}}{(2\pi i)^2}\oint_{|z|=q^{-1-\epsilon}}\oint_{|w|=r_2}\frac{z^g(q-1)(1-qw^2)\prod_P\mathcal{D}_P(z,w)}{qw(1-z)(1-qw)(1-\frac{w}{qz})(1-qzw^2)(1-q^2zw^2)(qzw^2)^{\left[\frac{g}{2}\right]}}dwdz\\&+O(q^{\frac{g}{2}(1+\epsilon)}),
\end{align*}
\begin{align*}
    \mathcal{S}_{g-1,1}^e(V=\square)&=-\frac{q^{2g+2}}{(2\pi i)^2}\oint_{|z|=q^{-1-\epsilon}}\oint_{|w|=r_2}\frac{z^g(q-1)(1-qw^2)\prod_P\mathcal{D}_P(z,w)}{qw(1-z)(1-qw)(1-\frac{w}{qz})(1-q^2zw^2)^2(q^2zw^2)^{\left[\frac{g-1}{2}\right]}}dwdz\\&+O(q^{\frac{g}{2}(1+\epsilon)})
\end{align*}
and
\begin{align*}
    \mathcal{S}_{g-1,2}^e(V=\square)&=-\frac{q^{\frac{3g}{2}+2}}{(2\pi i)^2}\oint_{|z|=q^{-1-\epsilon}}\oint_{|w|=r_2}\frac{z^g(q-1)(1-qw^2)\prod_P\mathcal{D}_P(z,w)}{qw(1-z)(1-qw)(1-\frac{w}{qz})(1-qzw^2)(1-q^2zw^2)(qzw^2)^{\left[\frac{g-1}{2}\right]}}dwdz\\&+O(q^{\frac{g}{2}(1+\epsilon)}),
\end{align*}
Shrinking the contour $|z|=q^{-1-\epsilon}$ to $|z|=q^{-\frac{3}{2}}$, we do not encounter any poles. Enlarging the contour $|w|=r_2<q^{-1}$ to $|w|=q^{-\frac{1}{4}-\epsilon}$, we encounter two simple poles, one at $w=q^{-1}$ and one at $w=qz$. Evaluating the residues at $w=q^{-1}$ and $w=qz$ and writing
\begin{equation}\label{eq:5.11}
    \mathcal{S}^e_{k,\ell}(V=\square)=\mathcal{A}_{k,\ell}^e+\mathcal{B}_{k,\ell}^e+\mathcal{C}_{k,\ell}^e+O(q^{\frac{g}{2}(1+\epsilon)}),
\end{equation}
whilst using Lemma 6.3 where we have for each $k,\ell$, $\mathcal{C}_{k,\ell}^e\ll q^{\frac{g}{2}(1+\epsilon)}$, then we have 
\begin{align*}
    \mathcal{A}^e_{g,1}&=-\frac{q^{2g+2}}{2\pi i}\oint_{|z|=q^{-\frac{3}{2}}}\frac{z^g(q-1)\prod_P\mathcal{B}_P(z,q^{-1})}{q(1-z)^3z^{\left[\frac{g}{2}\right]}}dz,\\
    \mathcal{B}_{g,1}^e&=-\frac{q^{2g+2}}{2\pi i}\oint_{|z|=q^{-\frac{3}{2}}}\frac{z^g(q-1)(1-q^3z^2)\prod_P\mathcal{D}_P(z,qz)}{q(1-z)(1-q^2z)(1-q^4z^3)^2(q^4z^3)^{\left[\frac{g}{2}\right]}}dz,
\end{align*}
\begin{align*}
    \mathcal{A}_{g,2}^e&=-\frac{q^{\frac{3g+3}{2}}}{2\pi i}\oint_{|z|=q^{-\frac{3}{2}}}\frac{z^g(q-1)\prod_P\mathcal{B}_P(z,q^{-1})}{q(1-z)^2(1-q^{-1}z)(q^{-1}z)^{\left[\frac{g}{2}\right]}}dz,\\
    \mathcal{B}^e_{g,2}&=-\frac{q^{\frac{3g+3}{2}}}{2\pi i}\oint_{|z|=q^{-\frac{3}{2}}}\frac{z^g(q-1)(1-q^3z^2)\prod_P\mathcal{D}_P(z,qz)}{q(1-z)(1-q^2z)(1-q^3z^3)(1-q^4z^3)(q^3z^3)^{\left[\frac{g}{2}\right]}}dz,
\end{align*}
\begin{align*}
    \mathcal{A}_{g-1,1}^e&=-\frac{q^{2g+2}}{2\pi i}\oint_{|z|=q^{-\frac{3}{2}}}\frac{z^g(q-1)\prod_P\mathcal{B}_P(z,q^{-1})}{q(1-z)^3z^{\left[\frac{g-1}{2}\right]}}dz,\\
    \mathcal{B}_{g-1,1}^e&=-\frac{q^{2g+2}}{2\pi i}\oint_{|z|=q^{-\frac{3}{2}}}\frac{z^g(q-1)(1-q^3z^2)\prod_P\mathcal{D}_P(z,qz)}{q(1-z)(1-q^2z)(1-q^4z^3)^2(q^4z^3)^{\left[\frac{g-1}{2}\right]}}dz
\end{align*}
and
\begin{align*}
    \mathcal{A}^e_{g-1,2}&=-\frac{q^{\frac{3g}{2}+2}}{2\pi i}\oint_{|z|=q^{-\frac{3}{2}}}\frac{z^g(q-1)\prod_P\mathcal{B}_P(z,q^{-1})}{q(1-z)^2(1-q^{-1}z)(q^{-1}z)^{\left[\frac{g-1}{2}\right]}}dz,\\
    \mathcal{B}_{g-1,2}^e&=\frac{q^{\frac{3g}{2}+2}}{2\pi i}\oint_{|z|=q^{-\frac{3}{2}}}\frac{z^g(q-1)(1-q^3z^2)\prod_P\mathcal{D}_P(z,qz)}{q(1-z)(1-q^2z)(1-q^3z^3)(1-q^4z^3)(q^3z^3)^{\left[\frac{g-1}{2}\right]}}dz.
\end{align*}
\end{proof}
\subsection{Degree $f$ odd}
In this subsection, we prove the following result.
\begin{lemma}
We have
\begin{align}
    \mathcal{S}^o(V=\square)=\mathcal{A}^o_{g,1}-\mathcal{A}^o_{g,2}+\mathcal{A}^o_{g-1,1}-\mathcal{A}^o_{g-1,2}+\mathcal{B}^o_{g,1}-\mathcal{B}^o_{g,2}+\mathcal{B}^o_{g-1,1}-\mathcal{B}^o_{g-1,2}+O(q^{\frac{g}{2}(1+\epsilon)}),
\end{align}
where $\mathcal{A}^o_{k,\ell}$ and $\mathcal{B}^o_{k,\ell}$ are the integrals stated at the end of the subsection.
\end{lemma}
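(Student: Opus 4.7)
The plan is to mirror the proof of Lemma 5.4, with the odd parity of $d(f)$ forcing two changes: the Poisson-side sums in $S^o(V;f,C)$ from (\ref{eq:3.9}) are over $V$ of \emph{exact} (rather than bounded) degree, and because $d(f)$ is odd both degrees $d(f)-2g-3+2d(C)$ and $d(f)-2g-1+2d(C)$ are even, so the restriction $V=\ell^2$ reduces each inner sum to a sum over $\ell$ of a specific integer degree $(d(f)-2g-3)/2+d(C)$ or $(d(f)-2g-1)/2+d(C)$.

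First I would apply the exact-degree form of Perron's formula (Lemma 2.8) to both inner sums and combine them to obtain
\[
S^o(\ell^2;f,C)=\frac{1}{2\pi i}\oint_{|z|=q^{-1-\epsilon}}\frac{(qz-1)\,z^{g-d(C)}}{q\,z^{(d(f)+1)/2}}\,A_f(z)\,dz.
\]
Substituting into each $\mathcal{S}^o_{k,\ell}(V=\square)$, interchanging summations with the contour integral, and applying the estimate $\sum_{C\mid f^\infty,\,d(C)\le g}|C|^{-2}z^{-d(C)}=\prod_{P\mid f}(1-|P|^{-2}z^{-d(P)})^{-1}+O(q^{g(\epsilon-1)})$, each $\mathcal{S}^o_{k,\ell}(V=\square)$ reduces to a single $z$-integral of the generating series
\[
H^o_{k,\ell}(z)=\sum_{\substack{f\in\mathbb{A}^+_{\le k}\\ d(f)\text{ odd}}}\frac{A_f(z)}{|f|\,z^{(d(f)+1)/2}}\prod_{P\mid f}(1-|P|^{-2}z^{-d(P)})^{-1}
\]
against an explicit rational factor in $z$, with an error $O(q^{g(1+\epsilon)/2})$. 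Writing $d(f)=2m+1$, the weight $|f|\,z^{(d(f)+1)/2}$ equals $(qz)(q^2z)^m$, so I can apply Perron a second time to the series $\mathcal{B}(z,w)$ of Lemma 5.2 and sum the resulting geometric series in $m$ up to $\left[\frac{g-1}{2}\right]$ (respectively $\left[\frac{g-2}{2}\right]$ when $k=g-1$, with $q^2z$ replaced by $qz$ when $\ell=2$). The auxiliary tail $-q^2zw/(1-q^2zw^2)$ from the geometric-sum manipulation has no pole inside $|w|=r_2<q^{-1}$ and therefore contributes nothing.

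At this point I would substitute the factorizations (\ref{eq:5.5}) and (\ref{eq:5.6}), noting that $(qz-1)$ cancels $(1-qz)^{-1}$ from $\mathcal{Z}(z)$ up to a sign and that the $(1-qw^2)$ arising from $\mathcal{Z}(w^2)^{-1}$ in (\ref{eq:5.6}) collapses one factor of $1-q^2zw^2$. The integrand then involves $\prod_P\mathcal{D}_P(z,w)$ in the numerator and, as a function of $w$, only simple poles at $w=q^{-1}$ (from $\mathcal{Z}(w)$) and $w=qz$ (from $\mathcal{Z}(w/(q^2z))$), together with the remaining factor $(1-q^2zw^2)$ whose zeros lie on $|w|=q^{-1/4}$ once $|z|=q^{-3/2}$. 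Shrinking $|z|=q^{-1-\epsilon}$ to $|z|=q^{-3/2}$ crosses no singularity, and enlarging $|w|=r_2$ to $|w|=q^{-1/4-\epsilon}$ encounters exactly these two simple poles. Their residues at $w=q^{-1}$ and $w=qz$ produce the integrals $\mathcal{A}^o_{k,\ell}$ and $\mathcal{B}^o_{k,\ell}$ respectively, while the remaining double integral on $|w|=q^{-1/4-\epsilon}$ is bounded by $O(q^{g(1+\epsilon)/2})$ using the convergence region of $\prod_P\mathcal{D}_P$ from Lemma 5.3, exactly as in the even case (Lemma 6.3).

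The main technical nuisance will be the bookkeeping induced by the half-integer shift $(d(f)+1)/2$, which replaces the even-case weight $|f|z^{d(f)/2}$ and is responsible both for the modified prefactors $q^{2g+5/2}$, $q^{3g/2+2}$, $q^{2g+5/2}$, $q^{(3g+5)/2}$ already visible in Section~3 for the four $\mathcal{S}^o_{k,\ell}$, and for the floor $\left[\frac{g-1}{2}\right]$ (or $\left[\frac{g-2}{2}\right]$) appearing in the geometric truncation rather than $\left[\frac{g}{2}\right]$ (or $\left[\frac{g-1}{2}\right]$) in the even case. Care is required to ensure that the resulting $\mathcal{A}^o_{k,\ell}$ combine with the $\mathcal{A}^e_{k,\ell}$ of Lemma 5.4 in just the right way for the subsequent contour manipulation in Section~5.3 to yield the $\mathcal{C}(u)$-integrals asserted in Proposition 5.1 that cancel the main terms of Proposition 4.1.
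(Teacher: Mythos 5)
Your proposal is correct and follows essentially the same route as the paper's proof of this lemma: Perron's formula applied to the exact-degree sums over $\ell$ with $V=\ell^2$, a second application of Perron to reach $\mathcal{B}(z,w)$, the factorizations of Lemmas 5.2 and 5.3, the contour moves $|z|=q^{-1-\epsilon}\to q^{-3/2}$ and $|w|=r_2\to q^{-1/4-\epsilon}$, and the residues at the simple poles $w=q^{-1}$ and $w=qz$ producing $\mathcal{A}^o_{k,\ell}$ and $\mathcal{B}^o_{k,\ell}$, with the leftover double integral bounded by $q^{\frac{g}{2}(1+\epsilon)}$. The only slip is the parenthetical claim that the numerator factor $1-qw^2$ from $\mathcal{Z}(w^2)^{-1}$ ``collapses'' a factor of $1-q^2zw^2$ (it does not --- it simply remains in the numerator), but this is harmless since, as you correctly observe in the same sentence, the zeros of $1-q^2zw^2$ lie on $|w|=q^{-1/4}$ and are never crossed by the enlarged contour.
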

\begin{proof}

From (\ref{eq:3.9}) and using the function field analogue of Perron's formula, we have
\begin{equation*}
    S^o(l^2;f,C)=\frac{1}{2\pi i}\oint_{|z|=q^{-1-\epsilon}}\frac{A_f(z)z^{g-\frac{1}{2}}(qz-1)}{qz^{\frac{d(f)}{2}+d(C)}}dz.
\end{equation*}
Also, using the fact that (see, \cite{Florea2017}, Proof of Lemma 6.1), 
\begin{equation*}
  \sum_{\substack{C|f^{\infty}\\C\in\mathbb{A}^+_{\leq g}}}\frac{1}{|C|^2z^{d(C)}}=\prod_{P|f}(1-z^{-d(P)}|P|^{-2})^{-1}+O(q^{g(\epsilon-1)}),    
\end{equation*}
we have
\begin{equation*}
    \mathcal{S}^o_{g,1}(V=\square)=\frac{q^{2g+\frac{5}{2}}}{2\pi i}\oint_{|z|=q^{-1-\epsilon}}\frac{z^{g-\frac{1}{2}}(qz-1)}{q}H^o_{g,1}(z)dz+O(q^{\frac{g}{2}(1+\epsilon)}),
\end{equation*}
where
\begin{equation*}
    H^o_{g,1}(z)=\sum_{\substack{f\in\mathbb{A}^+_{\leq g}\\d(f)\text{ odd}}}\frac{A_f(z)}{|f|z^{\frac{d(f)}{2}}}\prod_{P|f}(1-|P|^{-2}z^{-d(P)}).
\end{equation*}
Similarly we have
\begin{equation*}
    \mathcal{S}^o_{g,2}(V=\square)=\frac{q^{\frac{3g}{2}+2}}{2\pi i}\oint_{|z|=q^{-1-\epsilon}}\frac{z^{g-\frac{1}{2}}(qz-1)}{q}H^o_{g,2}(z)dz+O(q^{\frac{g}{2}(1+\epsilon)}),
\end{equation*}
\begin{equation*}
    \mathcal{S}_{g-1,1}^o(V=\square)=\frac{q^{2g+\frac{5}{2}}}{2\pi i}\oint_{|z|=q^{-1-\epsilon}}\frac{z^{g-\frac{1}{2}}(qz-1)}{q}H^o_{g-1,1}(z)dz+O(q^{\frac{g}{2}(1+\epsilon)})
\end{equation*}
and
\begin{equation*}
    \mathcal{S}^o_{g-1,2}(V=\square)=\frac{q^{\frac{3g+5}{2}}}{2\pi i}\oint_{|z|=q^{-1-\epsilon}}\frac{z^{g-\frac{1}{2}}(qz-1)}{q}H^o_{g-1,2}(z)dz+O(q^{\frac{g}{2}(1+\epsilon)}),
\end{equation*}
where
\begin{equation*}
    H^o_{g,2}(z)=\sum_{\substack{f\in\mathbb{A}^+_{\leq g}\\d(f)\text{ odd}}}\frac{A_f(z)}{\sqrt{|f|}z^{\frac{d(f)}{2}}}\prod_{P|f}(1-|P|^{-2}z^{-d(P)})^{-1},
\end{equation*}
\begin{equation*}
    H^o_{g-1,1}(z)=\sum_{\substack{f\in\mathbb{A}^+_{\leq g-1}\\d(f)\text{ odd}}}\frac{A_f(z)}{|f|z^{\frac{d(f)}{2}}}\prod_{P|f}(1-|P|^{-2}z^{-d(P)})^{-1}
\end{equation*}
and
\begin{equation*}
    H^o_{g-1,2}(z)=\sum_{\substack{f\in\mathbb{A}^+_{\leq g-1}\\d(f)\text{ odd}}}\frac{A_f(z)}{\sqrt{|f|}z^{\frac{d(f)}{2}}}\prod_{P|f}(1-|P|^{-2}z^{-d(P)})^{-1}.
\end{equation*}
Using the function field analogue of Perron's formula, we have 
\begin{equation}\label{eq:5.12}
    H_{g,1}^o(z)=\frac{1}{2\pi i}\oint_{|w|=r_2}\frac{\mathcal{B}(z,w)}{qz^{\frac{1}{2}}w^2(1-q^2zw^2)(q^2zw^2)^{\left[\frac{g-1}{2}\right]}}dw-\frac{1}{2\pi i}\oint_{|w|=r_2}\frac{qz^{\frac{1}{2}}\mathcal{B}(z,w)}{1-q^2zw^2}dw,
\end{equation}
\begin{equation}\label{eq:5.13}
    H_{g,2}^o(z)=\frac{1}{2\pi i}\oint_{|w|=r_2}\frac{\mathcal{B}(z,w)}{q^{\frac{1}{2}}z^{\frac{1}{2}}w^2(1-qzw^2)(qzw^2)^{\left[\frac{g-1}{2}\right]}}dw-\frac{1}{2\pi i}\oint_{|w|=r_2}\frac{q^{\frac{1}{2}}z^{\frac{1}{2}}\mathcal{B}(z,w)}{1-qzw^2}dw,
\end{equation}
\begin{equation}\label{eq:5.14}
    H^o_{g-1,1}(z)=\frac{1}{2\pi i}\oint_{|w|=r_2}\frac{qz^{\frac{1}{2}}\mathcal{B}(z,w)}{(1-q^2zw^2)(q^2zw^2)^{\left[\frac{g}{2}\right]}}dw-\frac{1}{2\pi i}\oint_{|w|=r_2}\frac{qz^{\frac{1}{2}}\mathcal{B}(z,w)}{1-q^2zw^2}dw
\end{equation}
and
\begin{equation}\label{eq:5.15}
    H_{g-1,2}^o(z)=\frac{1}{2\pi i}\oint_{|w|=r_2}\frac{q^{\frac{1}{2}}z^{\frac{1}{2}}\mathcal{B}(z,w)}{(1-q^2zw^2)(q^2zw^2)^{\left[\frac{g}{2}\right]}}dw-\frac{1}{2\pi i}\oint_{|w|=r_2}\frac{q^{\frac{1}{2}}z^{\frac{1}{2}}\mathcal{B}(z,w)}{1-q^2zw^2}dw.
\end{equation}
The second integrals in (\ref{eq:5.12}), (\ref{eq:5.13}), (\ref{eq:5.14}) and (\ref{eq:5.15}) are zero since the integrands have no poles inside the circle $|w|=r_2<q^{-1}$. Therefore we have
\begin{equation*}
    \mathcal{S}_{g,1}^o(V=\square)=\frac{q^{2g+\frac{5}{2}}}{(2\pi i)^2}\oint_{|z|=q^{-1-\epsilon}}\oint_{|w|=r_2}\frac{z^{g-1}(qz-1)\mathcal{B}(z,w)}{q^2w^2(1-q^2zw^2)(q^2zw^2)^{\left[\frac{g-1}{2}\right]}}dwdz+O(q^{\frac{g}{2}(1+\epsilon)}),
\end{equation*}
\begin{equation*}
    \mathcal{S}_{g,2}^o(V=\square)=\frac{q^{\frac{3g}{2}+2}}{(2\pi i)^2}\oint_{|z|=q^{-1-\epsilon}}\oint_{|w|=r_2}\frac{z^{g-1}(qz-1)\mathcal{B}(z,w)}{q^{\frac{3}{2}}w^2(1-qzw^2)(qzw^2)^{\left[\frac{g-1}{2}\right]}}dwdz+O(q^{\frac{g}{2}(1+\epsilon)}),
\end{equation*}
\begin{equation*}
    \mathcal{S}_{g-1,1}^o(V=\square)=\frac{q^{2g+\frac{5}{2}}}{(2\pi i)^2}\oint_{|z|=q^{-1-\epsilon}}\oint_{|w|=r_2}\frac{z^g(qz-1)\mathcal{B}(z,w)}{(1-q^2zw^2)(q^2zw^2)^{\left[\frac{g}{2}\right]}}dwdz+O(q^{\frac{g}{2}(1+\epsilon)})
\end{equation*}
and
\begin{equation*}
    \mathcal{S}^o_{g-1,2}(V=\square)=\frac{q^{\frac{3g+5}{2}}}{(2\pi i)^2}\oint_{|z|=q^{-1-\epsilon}}\oint_{|w|=r_2}\frac{z^g(qz-1)\mathcal{B}(z,w)}{q^{\frac{1}{2}}(1-qzw^2)(qzw^2)^{\left[\frac{g}{2}\right]}}dwdz+O(q^{\frac{g}{2}(1+\epsilon)}).
\end{equation*}
Using equation (\ref{eq:5.5}) in Lemma 5.2 we have 
\begin{equation*}
    \mathcal{S}^o_{g,1}(V=\square)=-\frac{q^{2g+\frac{5}{2}}}{(2\pi i)^2}\oint_{|z|=q^{-1-\epsilon}}\oint_{|w|=r_2}\frac{z^{g-1}\prod_P\mathcal{B}_P(z,w)}{q^2w^2(1-qw)(1-q^2zw^2)(q^2zw^2)^{\left[\frac{g-1}{2}\right]}}dwdz+O(q^{\frac{g}{2}(1+\epsilon)}),
\end{equation*}
\begin{align*}
    \mathcal{S}^o_{g,2}(V=\square)&=-\frac{q^{\frac{3g}{2}+2}}{(2\pi i)^2}\oint_{|z|=q^{-1-\epsilon}}\oint_{|w|=r_2}\frac{z^{g-1}\prod_P\mathcal{B}_P(z,w)}{q^{\frac{3}{2}}w^2(1-qw)(1-qzw^2)(1-q^2zw^2)(qzw^2)^{\left[\frac{g-1}{2}\right]}}dwdz\\&+O(q^{\frac{g}{2}(1+\epsilon)}),
\end{align*}
\begin{equation*}
    \mathcal{S}^o_{g-1,1}(V=\square)=-\frac{q^{2g+\frac{5}{2}}}{(2\pi i)^2}\oint_{|z|=q^{-1-\epsilon}}\oint_{|w|=r_2}\frac{z^g\prod_P\mathcal{B}_P(z,w)}{(1-qw)(1-q^2zw^2)^2(q^2zw^2)^{\left[\frac{g}{2}\right]}}dwdz+O(q^{\frac{g}{2}(1+\epsilon)})
\end{equation*}
and
\begin{equation*}
    \mathcal{S}^o_{g-1,2}(V=\square)=-\frac{q^{\frac{3g+5}{2}}}{(2\pi i)^2}\oint_{|z|=q^{-1-\epsilon}}\oint_{|w|=r_2}\frac{z^g\prod_P\mathcal{B}_P(z,w)}{q^{\frac{1}{2}}(1-qw)(1-qzw^2)(1-q^2zw^2)(qzw^2)^{\left[\frac{g}{2}\right]}}dwdz+O(q^{\frac{g}{2}(1+\epsilon)}).
\end{equation*}
Using equation (\ref{eq:5.6}) in Lemma 5.3, we have
\begin{align*}
    \mathcal{S}^o_{g,1}(V=\square)&=-\frac{q^{2g+\frac{5}{2}}}{(2\pi i)^2}\oint_{|z|=q^{-1-\epsilon}}\oint_{|w|=r_2}\frac{z^{g-1}(1-qw^2)\prod_P\mathcal{D}_P(z,w)}{q^2w^2(1-qw)(1-\frac{w}{qz})(1-q^2zw^2)(q^2zw^2)^{\left[\frac{g-1}{2}\right]}}dwdz\\&+O(q^{\frac{g}{2}(1+\epsilon)}),
\end{align*}
\begin{align*}
     \mathcal{S}^o_{g,2}(V=\square)&=-\frac{q^{\frac{3g}{2}+2}}{(2\pi i)^2}\oint_{|z|=q^{-1-\epsilon}}\oint_{|w|=r_2}\frac{z^{g-1}(1-qw^2)\prod_P\mathcal{D}_P(z,w)}{q^{\frac{3}{2}}w^2(1-qw)(1-\frac{w}{qz})(1-qzw^2)(1-q^2zw^2)(qzw^2)^{\left[\frac{g-1}{2}\right]}}dwdz\\&+O(q^{\frac{g}{2}(1+\epsilon)}),
\end{align*}
\begin{equation*}
    \mathcal{S}^o_{g-1,1}(V=\square)=-\frac{q^{2g+\frac{5}{2}}}{(2\pi i)^2}\oint_{|z|=q^{-1-\epsilon}}\oint_{|w|=r_2}\frac{z^g(1-qw^2)\prod_P\mathcal{D}_P(z,w)}{(1-qw)(1-\frac{w}{qz})(1-q^2zw^2)^2(q^2zw^2)^{\left[\frac{g}{2}\right]}}dwdz+O(q^{\frac{g}{2}(1+\epsilon)})
\end{equation*}
and
\begin{align*}
     \mathcal{S}^o_{g-1,2}(V=\square)&=-\frac{q^{\frac{3g+5}{2}}}{(2\pi i)^2}\oint_{|z|=q^{-1-\epsilon}}\oint_{|w|=r_2}\frac{z^g(1-qw^2)\prod_P\mathcal{D}_P(z,w)}{q^{\frac{1}{2}}(1-qw)(1-\frac{w}{qz})(1-qzw^2)(1-q^2zw^2)(qzw^2)^{\left[\frac{g}{2}\right]}}dwdz\\&+O(q^{\frac{g}{2}(1+\epsilon)}).
\end{align*}
Shrinking the contour $|z|=q^{-1-\epsilon}$ to $|z|=q^{-\frac{3}{2}}$, we do not encounter any poles. Enlarging the contour $|w|=r_2<q^{-1}$ to $|w|=q^{-\frac{1}{4}-\epsilon}$, we encounter two simple poles, one at $w=q^{-1}$ and one at $w=qz$. Evaluating the residues at $w=q^{-1}$ and $w=qz$ and writing
\begin{equation}\label{eq:5.16}
    \mathcal{S}^o_{k,\ell}(V=\square)=\mathcal{A}_{k,\ell}^o+\mathcal{B}_{k,\ell}^o+\mathcal{C}_{k,\ell}^o+O(q^{\frac{g}{2}(1+\epsilon)}),
\end{equation}
whilst using Lemma 6.3 where we have for each $k,\ell$, $\mathcal{C}_{k,\ell}^o\ll q^{\frac{g}{2}(1+\epsilon)}$, then we have 
\begin{align*}
    \mathcal{A}^o_{g,1}&=-\frac{q^{2g+\frac{5}{2}}}{2\pi i}\oint_{|z|=q^{-\frac{3}{2}}}\frac{z^{g-1}\prod_P\mathcal{B}_P(z,q^{-1})}{q(1-z)^2z^{\left[\frac{g-1}{2}\right]}}dz,\\
    \mathcal{B}_{g,1}^o&=-\frac{q^{2g+\frac{5}{2}}}{2\pi i}\oint_{|z|=q^{-\frac{3}{2}}}\frac{z^{g-2}(1-q^3z^2)\prod_P\mathcal{D}_P(z,qz)}{q^3(1-q^2z)(1-q^4z^3)^2(q^4z^3)^{\left[\frac{g-1}{2}\right]}}dz,
\end{align*}
\begin{align*}
    \mathcal{A}^o_{g,2}&=-\frac{q^{\frac{3g}{2}+2}}{2\pi i}\oint_{|z|=q^{-\frac{3}{2}}}\frac{z^{g-1}\prod_P\mathcal{B}_P(z,q^{-1})}{q^{\frac{1}{2}}(1-z)(1-q^{-1}z)(q^{-1}z)^{\left[\frac{g-1}{2}\right]}}dz,\\
    \mathcal{B}^o_{g,2}&=-\frac{q^{\frac{3g}{2}+2}}{2\pi i}\oint_{|z|=q^{-\frac{3}{2}}}\frac{z^{g-2}(1-q^3z^2)\prod_P\mathcal{D}_P(z,qz)}{q^{\frac{5}{2}}(1-q^2z)(1-q^3z^3)(1-q^4z^3)(q^3z^3)^{\left[\frac{g-1}{2}\right]}}dz,
\end{align*}
\begin{align*}
    \mathcal{A}^o_{g-1,1}&=-\frac{q^{2g+\frac{5}{2}}}{2\pi i}\oint_{|z|=q^{-\frac{3}{2}}}\frac{z^g\prod_P\mathcal{B}_P(z,q^{-1})}{q(1-z)^2z^{\left[\frac{g}{2}\right]}}dz,\\
    \mathcal{B}_{g-1,1}^o&=-\frac{q^{2g+\frac{5}{2}}}{2\pi i}\oint_{|z|=q^{-\frac{3}{2}}}\frac{qz^{g+1}(1-q^3z^2)\prod_P\mathcal{D}_P(z,qz)}{(1-q^2z)(1-q^4z^3)^2(q^4z^3)^{\left[\frac{g}{2}\right]}}dz
\end{align*}
and
\begin{align*}
    \mathcal{A}_{g-1,2}^o&=-\frac{q^{\frac{3g+5}{2}}}{2\pi i}\oint_{|z|=q^{-\frac{3}{2}}}\frac{z^g\prod_P\mathcal{B}_P(z,q^{-1})}{q^{\frac{3}{2}}(1-z)(1-q^{-1}z)(q^{-1}z)^{\left[\frac{g}{2}\right]}}dz,\\
    \mathcal{B}^o_{g-1,2}&=-\frac{q^{\frac{3g+5}{2}}}{2\pi i}\oint_{|z|=q^{-\frac{3}{2}}}\frac{q^{\frac{1}{2}}z^{g+1}(1-q^3z^2)\prod_P\mathcal{D}_P(z,qz)}{(1-q^2z)(1-q^3z^3)(1-q^4z^3)(q^3z^3)^{\left[\frac{g}{2}\right]}}dz. 
\end{align*}
\end{proof}
\subsection{Contribution from $\mathcal{A}$ Terms}
In this subsection, we will focus on evaluating the $\mathcal{A}$ terms which corresponds to the pole at $w=q^{-1}$, these will give the main terms in Proposition 5.1. Let 
\begin{equation*}
    \mathcal{A}=\mathcal{A}^e_{g,1}-\mathcal{A}^e_{g,2}+\mathcal{A}^e_{g-1,1}-\mathcal{A}^e_{g-1,2}+\mathcal{A}^o_{g,1}-\mathcal{A}^o_{g,2}+\mathcal{A}^o_{g-1,1}-\mathcal{A}^o_{g-1,2},
\end{equation*}
then, the main result in this subsection is the following.
\begin{lemma}Using the same notation as before, we have
\begin{equation*}
    \mathcal{A}=\mathcal{S}_1(V=\square)+\mathcal{S}_2(V=\square)+\mathcal{S}_3(V=\square)+\mathcal{S}_4(V=\square)+O(q^{\frac{g}{2}(1+\epsilon)}),
\end{equation*}
where, in particular, the terms $\mathcal{S}_1(V=\square), \mathcal{S}_2(V=\square), \mathcal{S}_3(V=\square)$ and $\mathcal{S}_4(V=\square)$ are the integrals stated in Proposition 5.1.
\end{lemma}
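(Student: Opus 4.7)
The plan is to evaluate each of the eight $\mathcal{A}$ integrals by a common three-step procedure --- identify $\prod_P \mathcal{B}_P(z,q^{-1})$ in closed form, change variables $u=qz$, and deform the contour out to $|u|=R$ with $1<R<q$ --- then pair the reduced integrals to produce the four targets $\mathcal{S}_i(V=\square)$. The computation parallels Florea's treatment of the imaginary case in Section~6 of \cite{Florea2017}; the new feature is that the even/odd parity of $d(f)$ in the real quadratic setting splits each contribution into two terms, giving eight $\mathcal{A}$-integrals that must be reconciled into four targets.

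First I would set $w=q^{-1}$ in the formula for $\mathcal{B}_P(z,w)$ from Lemma~5.2, substitute $|P|=q^{d(P)}$, and simplify locally at each $P$. Re-multiplying the Euler product, one obtains an identity of the schematic form
\[
\prod_P \mathcal{B}_P(z,q^{-1}) \;=\; \rho(z)\cdot\mathcal{C}(qz),
\]
where $\rho(z)$ is an explicit simple rational function of $z$ whose poles lie at $z=q^{-1}$ (equivalently $u=1$ after the substitution $u=qz$). This is the mechanism by which the Euler product $\mathcal{C}(u)$ of Proposition~4.1 enters, and it is a direct function-field analogue of the identification carried out in \cite{Florea2017}, Section~6, specialised to $w=q^{-1}$.

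Next I would substitute $u=qz$ in each of the eight integrals $\mathcal{A}^e_{g,\ell},\mathcal{A}^e_{g-1,\ell},\mathcal{A}^o_{g,\ell},\mathcal{A}^o_{g-1,\ell}$ for $\ell\in\{1,2\}$. The contour $|z|=q^{-3/2}$ becomes $|u|=q^{-1/2}$; the overall $q$-power prefactors repackage into $q^{2g+2}$ (for the $\ell=1$ family) and $q^{(3g+3)/2}$ or $q^{3g/2+2}$ (for the $\ell=2$ family); and the denominator powers $z^{[g/2]}, z^{[(g-1)/2]}$ convert into $(qu)^{[g/2]}, (qu)^{[(g-1)/2]}$ or $u^{[g/2]}, u^{[(g-1)/2]}$. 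A case-by-case pairing of even and odd contributions --- $\mathcal{A}^e_{g,\ell}$ with $\mathcal{A}^o_{g-1,\ell}$, and $\mathcal{A}^e_{g-1,\ell}$ with $\mathcal{A}^o_{g,\ell}$ --- collapses the $(q-1), q, q^{1/2}$ discrepancies into the single factor $\zeta_{\mathbb{A}}(2)^{-1}=1-q^{-1}$, reproducing the exact denominators $u(1-qu)^2(qu)^{[g/2]}$, $u(1-qu)^2(qu)^{[(g-1)/2]}$, $u(1-u)(1-qu)u^{[g/2]}$ and $u(1-u)(1-qu)u^{[(g-1)/2]}$ of the four targets.

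The last step is to deform the contour $|u|=q^{-1/2}$ outward to $|u|=R$ with $1<R<q$, as demanded by the $\mathcal{S}_i(V=\square)$ form. Between these radii the integrand has an apparent pole at $u=1$, but by the analytic continuation (\ref{eq:4.4}) the factor $\mathcal{C}(u)=(1-u)\prod_P(1+\cdots)$ cancels it, so no residue is crossed and the deformation is free; the pole at $u=q^{-1}$ lies inside both contours and is never crossed. The only residual error comes from the tail estimate on the Euler product absorbed during the initial contour enlargement $|z|=q^{-1-\epsilon}\to|z|=q^{-3/2}$, which is $O(q^{\frac{g}{2}(1+\epsilon)})$. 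The main obstacle I expect is the bookkeeping in the pairing step: one must verify that the eight rational prefactors in the $\mathcal{A}$-integrands combine --- after the substitution and the identification of $\prod_P \mathcal{B}_P(z,q^{-1})$ with $\rho(z)\mathcal{C}(qz)$ --- into exactly the four target denominators with no residual cross-term, as any such cross-term would contribute at main-term order and spoil the clean statement of the lemma.
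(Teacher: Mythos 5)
Your overall strategy (specialise $\mathcal{B}_P$ at $w=q^{-1}$, change variables, identify the Euler product with $\mathcal{C}$, and pair the eight integrals into the four targets) is the right skeleton, but two things go wrong. The smaller one: the substitution must be $z=(qu)^{-1}$, not $u=qz$, since the targets carry $(qu)^{\left[\frac{g}{2}\right]}$ where the $\mathcal{A}$-integrands carry $z^{-\left[\frac{g}{2}\right]}$. Under the correct substitution the contour $|z|=q^{-\frac{3}{2}}$ lands at $|u|=\sqrt{q}$, which already satisfies $1<R<q$, so no outward deformation (and no delicate crossing of $u=1$) is needed; and the identity actually used is $(1-u)\prod_P\mathcal{B}_P\left(\frac{1}{qu},\frac{1}{q}\right)\left(1-\frac{1}{qu}\right)^{-1}=\mathcal{C}(u)/\zeta_{\mathbb{A}}(2)$, i.e.\ $\mathcal{C}$ is evaluated at $u=(qz)^{-1}$, not at $qz$.

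The serious gap is the pairing step, which you flag as ``bookkeeping'' but which is where essentially all of the work lies, and the clean pairing you propose ($\mathcal{A}^e_{g,\ell}$ with $\mathcal{A}^o_{g-1,\ell}$, etc.) is not what happens. In the paper the $\ell=1$ even-degree integrals must first be split via $(q-1)=(q-\frac{1}{z})+(\frac{1}{z}-1)$ into $\mathcal{A}^e_{k,1,1}+\mathcal{A}^e_{k,1,2}$; the pieces $\mathcal{A}^e_{g,1,1}$ and $\mathcal{A}^e_{g-1,1,1}$ alone already equal $\mathcal{S}_1(V=\square)$ and $\mathcal{S}_2(V=\square)$. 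Then $\mathcal{A}^o_{g,1}+\mathcal{A}^e_{g-1,1,2}$ and $\mathcal{A}^o_{g-1,1,1}+\mathcal{A}^e_{g,1,2}$ are combined using the parity identity $1+q^{\frac{1}{2}}=q^{-\frac{g-1}{2}+\left[\frac{g}{2}\right]}+q^{-\frac{g}{2}+\left[\frac{g-1}{2}\right]+1}$, and each such combination yields the target $\mathcal{S}_3(V=\square)$ or $\mathcal{S}_4(V=\square)$ \emph{plus} a leftover integral ($\hat{\mathcal{A}}_2$, $\tilde{\mathcal{A}}_2$). Together with $\mathcal{A}^o_{g-1,1,2}$ and all four $\ell=2$ integrals (which never match any target), this leaves nine residual terms, each individually of main-term size; the lemma holds only because their signed sum vanishes. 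Establishing that requires evaluating each by residues at $u=0$, $u=q^{-1}$ and $u=q^{-2}$, using $q^{g-\left[\frac{g-1}{2}\right]}=q^{\left[\frac{g}{2}\right]+1}$ and $q^{g-\left[\frac{g}{2}\right]}=q^{\left[\frac{g-1}{2}\right]+1}$ to cancel the $u=q^{-1}$ and $u=q^{-2}$ contributions, and a separate induction on $g$ (split by parity, occupying the entire appendix) to cancel the $u=0$ contributions. None of this cancellation mechanism appears in your proposal, so as written it does not establish the lemma.
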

\begin{proof}
Rewrite $\mathcal{A}^e_{g,1}$ and $\mathcal{A}^e_{g-1,1}$ as
\begin{equation*}
    \mathcal{A}^e_{g,1}=-\frac{q^{2g+2}}{2\pi i}\oint_{|z|=q^{-\frac{3}{2}}}\frac{z^g(q-\frac{1}{z}+\frac{1}{z}-1)\prod_P\mathcal{B}_P(z,q^{-1})}{q(1-z)^3z^{\left[\frac{g}{2}\right]}}dz
\end{equation*}
and
\begin{equation*}
    \mathcal{A}^e_{g-1,1}=-\frac{q^{2g+2}}{2\pi i}\oint_{|z|=q^{-\frac{3}{2}}}\frac{z^g(q-\frac{1}{z}+\frac{1}{z}-1)\prod_P\mathcal{B}_P(z,q^{-1})}{q(1-z)^3z^{\left[\frac{g-1}{2}\right]}}dz.
\end{equation*}
Then, for $k\in\{g,g-1\}$, let
\begin{equation*}
    \mathcal{A}^e_{k,1}=\mathcal{A}^e_{k,1,1}+\mathcal{A}^e_{k,1,2}
\end{equation*}
where
\begin{align*}
    \mathcal{A}^e_{g,1,1}&=-\frac{q^{2g+2}}{2\pi i}\oint_{|z|=q^{-\frac{3}{2}}}\frac{z^g(1-\frac{1}{qz})\prod_P\mathcal{B}_P(z,q^{-1})}{(1-z)^3z^{\left[\frac{g}{2}\right]}}dz,\\
    \mathcal{A}^e_{g,1,2}&=-\frac{q^{2g+2}}{2\pi i}\oint_{|z|=q^{-\frac{3}{2}}}\frac{z^{g-1}\prod_P\mathcal{B}_P(z,q^{-1})}{q(1-z)^2z^{\left[\frac{g}{2}\right]}}dz
\end{align*}
and
\begin{align*}
     \mathcal{A}^e_{g-1,1,1}&=-\frac{q^{2g+2}}{2\pi i}\oint_{|z|=q^{-\frac{3}{2}}}\frac{z^g(1-\frac{1}{qz})\prod_P\mathcal{B}_P(z,q^{-1})}{(1-z)^3z^{\left[\frac{g-1}{2}\right]}}dz,\\
    \mathcal{A}^e_{g-1,1,2}&=-\frac{q^{2g+2}}{2\pi i}\oint_{|z|=q^{-\frac{3}{2}}}\frac{z^{g-1}\prod_P\mathcal{B}_P(z,q^{-1})}{q(1-z)^2z^{\left[\frac{g-1}{2}\right]}}dz.
\end{align*}
Using the change of variables $z=(qu)^{-1}$, the contour of integration becomes the circle around the origin $|u|=\sqrt{q}$ and note that (from Lemma 5.2) $\prod_P\mathcal{B}_P(\frac{1}{qu},\frac{1}{q})$ is absolutely convergent for $q^{-1}<|u|<q$. We have 
\begin{equation*}
    \mathcal{A}^e_{g,1,1}=-\frac{q^{2g+2}}{2\pi i}\oint_{|u|=\sqrt{q}}\frac{(1-u)\prod_P\mathcal{B}_P\left(\frac{1}{qu},\frac{1}{q}\right)\left(1-\frac{1}{qu}\right)^{-1}}{u(1-qu)^2(qu)^{\left[\frac{g-1}{2}\right]}}du
\end{equation*}
and
\begin{equation*}
    \mathcal{A}^e_{g-1,1,1}=-\frac{q^{2g+2}}{2\pi i}\oint_{|u|=\sqrt{q}}\frac{(1-u)\prod_P\mathcal{B}_P\left(\frac{1}{qu},\frac{1}{q}\right)\left(1-\frac{1}{qu}\right)^{-1}}{u(1-qu)^2(qu)^{\left[\frac{g}{2}\right]}}du.
\end{equation*}
Using the fact (see \cite{Florea2017}, section 6) that 
\begin{equation}\label{eq:5.17}
    (1-u)\prod_P\mathcal{B}_P\left(\frac{1}{qu},\frac{1}{q}\right)\left(1-\frac{1}{qu}\right)^{-1}=\frac{\mathcal{C}(u)}{\zeta_{\mathbb{A}}(2)},
\end{equation}
we get that 
\begin{equation}\label{eq:5.18}
    \mathcal{A}_{g,1,1}^e=-\frac{q^{2g+2}}{\zeta_{\mathbb{A}}(2)}\frac{1}{2\pi i
    }\oint_{|u|=\sqrt{q}}\frac{\mathcal{C}(u)}{u(1-qu)^2(qu)^{\left[\frac{g-1}{2}\right]}}du
\end{equation}
and
\begin{equation}\label{eq:5.19}
      \mathcal{A}_{g-1,1,1}^e=-\frac{q^{2g+2}}{\zeta_{\mathbb{A}}(2)}\frac{1}{2\pi i
    }\oint_{|u|=\sqrt{q}}\frac{\mathcal{C}(u)}{u(1-qu)^2(qu)^{\left[\frac{g}{2}\right]}}du.
\end{equation}
We see that that (\ref{eq:5.18}) and (\ref{eq:5.19}) are precisely the terms $\mathcal{S}_1(V=\square)$ and $\mathcal{S}_2(V=\square)$ given in the statement of Lemma 5.6. Similarly, using the substitution $z=(qu)^{-1}$ we have
\begin{equation*}
    \mathcal{A}^e_{g,1,2}=-\frac{q^{2g+2}}{2\pi i}\oint_{|u|=\sqrt{q}}\frac{\prod_P\mathcal{B}_P\left(\frac{1}{qu},\frac{1}{q}\right)}{(1-qu)^2(qu)^{\left[\frac{g-1}{2}\right]}}du,
\end{equation*}
\begin{equation*}
    \mathcal{A}^e_{g-1,1,2}=-\frac{q^{2g+2}}{2\pi i}\oint_{|u|=\sqrt{q}}\frac{\prod_P\mathcal{B}_P\left(\frac{1}{qu},\frac{1}{q}\right)}{(1-qu)^2(qu)^{\left[\frac{g}{2}\right]}}du,
\end{equation*}
\begin{equation*}
    \mathcal{A}^o_{g,1}=-\frac{q^{2g+\frac{5}{2}}}{2\pi i}\oint_{|u|=\sqrt{q}}\frac{\prod_P\mathcal{B}_P\left(\frac{1}{qu},\frac{1}{q}\right)}{(1-qu)^2(qu)^{\left[\frac{g}{2}\right]}}du
\end{equation*}
and
\begin{equation*}
    \mathcal{A}^o_{g-1,1}=-\frac{q^{2g+\frac{5}{2}}}{2\pi i}\oint_{|u|=\sqrt{q}}\frac{\prod_P\mathcal{B}_P\left(\frac{1}{qu},\frac{1}{q}\right)}{qu(1-qu)^2(qu)^{\left[\frac{g-1}{2}\right]}}du.
\end{equation*}
Using a variant of (\ref{eq:5.17}) we have 
\begin{equation}
\mathcal{A}^e_{g,1,2}=\frac{q^{2g+2}}{\zeta_{\mathbb{A}}(2)}\frac{1}{2\pi i}\oint_{|u|=\sqrt{q}}\frac{\mathcal{C}(u)}{qu(1-u)(1-qu)(qu)^{\left[\frac{g-1}{2}\right]}}du,
\end{equation}
\begin{equation}
\mathcal{A}^e_{g-1,1,2}=\frac{q^{2g+2}}{\zeta_{\mathbb{A}}(2)}\frac{1}{2\pi i}\oint_{|u|=\sqrt{q}}\frac{\mathcal{C}(u)}{qu(1-u)(1-qu)(qu)^{\left[\frac{g}{2}\right]}}du,
\end{equation}
\begin{equation}
    \mathcal{A}^o_{g,1}=\frac{q^{2g+\frac{5}{2}}}{\zeta_{\mathbb{A}}(2)}\frac{1}{2\pi i}\oint_{|u|=\sqrt{q}}\frac{\mathcal{C}(u)}{qu(1-u)(1-qu)(qu)^{\left[\frac{g}{2}\right]}}du
\end{equation}
and
\begin{equation}
    \mathcal{A}^o_{g-1,1}=\frac{q^{2g+\frac{5}{2}}}{\zeta_{\mathbb{A}}(2)}\frac{1}{2\pi i}\oint_{|u|=\sqrt{q}}\frac{\mathcal{C}(u)}{q^2u^2(1-u)(1-qu)(qu)^{\left[\frac{g-1}{2}\right]}}du.
\end{equation}
Rewrite $\mathcal{A}^o_{g-1,1}$ as
\begin{equation}
    \mathcal{A}^o_{g-1}=\frac{q^{2g+\frac{5}{2}}}{\zeta_{\mathbb{A}}(2)}\frac{1}{2\pi i}\oint_{|u|=\sqrt{q}}\frac{\mathcal{C}(u)(1-qu+qu)}{q^2u^2(1-u)(1-qu)(qu)^{\left[\frac{g-1}{2}\right]}}du.
\end{equation}
Then, we let
\begin{equation*}
    \mathcal{A}^o_{g-1,1}=\mathcal{A}^o_{g-1,1,1}+\mathcal{A}^o_{g-1,1,2},
\end{equation*}
where 
\begin{equation*}
    \mathcal{A}^o_{g-1,1,1}=\frac{q^{2g+\frac{5}{2}}}{\zeta_{\mathbb{A}}(2)}\frac{1}{2\pi i}\oint_{|u|=\sqrt{q}}\frac{\mathcal{C}(u)}{qu(1-u)(1-qu)(qu)^{\left[\frac{g-1}{2}\right]}}du
\end{equation*}
and
\begin{equation*}
    \mathcal{A}^o_{g-1,1,2}=\frac{q^{2g+\frac{5}{2}}}{\zeta_{\mathbb{A}}(2)}\frac{1}{2\pi i}\oint_{|u|=\sqrt{q}}\frac{\mathcal{C}(u)}{(1-u)(qu)^{\left[\frac{g-1}{2}\right]+2}}du.
\end{equation*}
Combining $\mathcal{A}^o_{g,1}$ and $\mathcal{A}^e_{g-1,1,2}$, we have 
\begin{equation*}
    \mathcal{A}^o_{g,1}+\mathcal{A}^e_{g-1,1,2}=\frac{q^{2g+2}}{\zeta_{\mathbb{A}}(2)}\frac{1}{2\pi i}\oint_{|u|=\sqrt{q}}\frac{\mathcal{C}(u)(1+q^{\frac{1}{2}})}{qu(1-u)(1-qu)(qu)^{\left[\frac{g}{2}\right]}}du.
\end{equation*}
Using the fact that (see \cite{Jung2013NoteEnsemble}, Proof of Main Theorem)
\begin{equation}\label{eq:5.25}
    1+q^{\frac{1}{2}}=q^{-\frac{g-1}{2}+\left[\frac{g}{2}\right]}+q^{-\frac{g}{2}+\left[\frac{g-1}{2}\right]+1},
\end{equation}
we have 
\begin{equation*}
     \mathcal{A}^o_{g,1}+\mathcal{A}^e_{g-1,1,2}=\frac{q^{2g+2}}{\zeta_{\mathbb{A}}(2)}\frac{1}{2\pi i}\oint_{|u|=\sqrt{q}}\frac{\mathcal{C}(u)(q^{-\frac{g-1}{2}+\left[\frac{g}{2}\right]}+q^{-\frac{g}{2}+\left[\frac{g-1}{2}\right]+1})}{qu(1-u)(1-qu)(qu)^{\left[\frac{g}{2}\right]}}du.
\end{equation*}
Let 
\begin{equation*}
    \mathcal{A}^o_{g,1}+\mathcal{A}^e_{g-1,1,2}=\hat{\mathcal{A}}_1+\hat{\mathcal{A}}_2,
\end{equation*}
where
\begin{equation}
    \hat{\mathcal{A}}_1=\frac{q^{\frac{3g+3}{2}}}{\zeta_{\mathbb{A}}(2)}\frac{1}{2\pi i}\oint_{|u|=\sqrt{q}}\frac{\mathcal{C}(u)}{u(1-u)(1-qu)u^{\left[\frac{g}{2}\right]}}du
\end{equation}
and
\begin{equation*}
    \hat{\mathcal{A}}_2=\frac{q^{\frac{3g}{2}+3+\left[\frac{g-1}{2}\right]}}{\zeta_{\mathbb{A}}(2)}\frac{1}{2\pi i}\oint_{|u|=\sqrt{q}}\frac{\mathcal{C}(u)}{(1-u)(1-qu)(qu)^{\left[\frac{g}{2}\right]+1}}du.
\end{equation*}
Similarly combining $\mathcal{A}^o_{g-1,1,1}$ and $\mathcal{A}^e_{g,1,2}$ and using (\ref{eq:5.25}), we have
\begin{equation*}
    \mathcal{A}^o_{g-1,1,1}+\mathcal{A}^e_{g,1,2}=\tilde{\mathcal{A}}_1+\tilde{\mathcal{A}}_2,
\end{equation*}
where 
\begin{equation}
    \tilde{\mathcal{A}}_1=\frac{q^{\frac{3g}{2}+2}}{\zeta_{\mathbb{A}}(2)}\frac{1}{2\pi i}\oint_{|u|=\sqrt{q}}\frac{\mathcal{C}(u)}{u(1-u)(1-qu)u^{\left[\frac{g-1}{2}\right]}}du
\end{equation}
and
\begin{equation}
    \tilde{\mathcal{A}}_2=\frac{q^{\frac{3g+5}{2}+\left[\frac{g}{2}\right]}}{\zeta_{\mathbb{A}}(2)}\frac{1}{2\pi i}\oint_{|u|=\sqrt{q}}\frac{\mathcal{C}(u)}{(1-u)(1-qu)(qu)^{\left[\frac{g-1}{2}\right]+1}}du.
\end{equation}
We see that that $\hat{\mathcal{A}}_1$ and $\tilde{\mathcal{A}}_1$ are precisely the terms $\mathcal{S}_3(V=\square)$ and $\mathcal{S}_4(V=\square)$ given in the statement of Lemma 5.6. From (\ref{eq:4.4}), we have that $\mathcal{C}(1)=0$, thus, inside the circle $|u|=\sqrt{q}$, $\mathcal{A}^o_{g-1,1,2}$ has a pole of order $\left[\frac{g-1}{2}\right]+2$ at $u=0$. Using the Residue Theorem we have that 
\begin{equation}
\mathcal{A}^o_{g-1,1,2}=\frac{q^{2g+\frac{1}{2}-\left[\frac{g-1}{2}\right]}}{\zeta_{\mathbb{A}}(2)}\sum_{n=0}^{\left[\frac{g-1}{2}\right]+1}\frac{\mathcal{C}^{(n)}(0)}{n!}.    
\end{equation}
Similarly, inside the circle $|u|=\sqrt{q}$, the integrals $\hat{\mathcal{A}}_2$ and $\tilde{\mathcal{A}}_2$ have a simple pole at $u=q^{-1}$ and a pole at $u=0$ of order $\left[\frac{g}{2}\right]+1$ and $\left[\frac{g-1}{2}\right]+1$ respectively. Thus we have 
\begin{equation*}
    \hat{\mathcal{A}}_2=\frac{q^{\frac{5g}{2}+1-2\left[\frac{g}{2}\right]}}{\zeta_{\mathbb{A}}(2)}\sum_{n=0}^{\left[\frac{g}{2}\right]}\frac{\mathcal{C}^{(n)}(0)}{n!}\sum_{k=0}^{\left[\frac{g}{2}\right]-n}q^k-\frac{q^{\frac{3g}{2}+3+\left[\frac{g-1}{2}\right]}}{\zeta_{\mathbb{A}}(2)(q-1)}
\end{equation*}
and
\begin{equation*}
    \tilde{\mathcal{A}}_2=\frac{q^{\frac{5g+1}{2}-2\left[\frac{g-1}{2}\right]}}{\zeta_{\mathbb{A}}(2)}\sum_{n=0}^{\left[\frac{g-1}{2}\right]}\frac{\mathcal{C}^{(n)}(0)}{n!}\sum_{k=0}^{\left[\frac{g-1}{2}\right]-n}q^k-\frac{q^{\frac{3g+5}{2}+\left[\frac{g}{2}\right]}}{\zeta_{\mathbb{A}}(2)(q-1)}.
\end{equation*}
For the remaining integrals, we rewrite $\mathcal{A}^e_{g,2}$ and $\mathcal{A}^e_{g-1,2}$ as
\begin{equation*}
    \mathcal{A}^e_{k,2}=\mathcal{A}_{k,2,1}^e+\mathcal{A}^e_{k,2,2}
\end{equation*}
where 
\begin{align*}
    \mathcal{A}^e_{g,2,1}&=-\frac{q^{\frac{3g+3}{2}}}{2\pi i}\oint_{|z|=q^{-\frac{3}{2}}}\frac{z^g(1-\frac{1}{qz})\prod_P\mathcal{B}_P(z,q^{-1})}{(1-z)^2(1-q^{-1}z)(q^{-1}z)^{\left[\frac{g}{2}\right]}}dz,\\
    \mathcal{A}^e_{g,2,2}&=-\frac{q^{\frac{3g+3}{2}}}{2\pi i}\oint_{|z|=q^{-\frac{3}{2}}}\frac{z^{g-1}\prod_P\mathcal{B}_P(z,q^{-1})}{q(1-z)(1-q^{-1}z)(q^{-1}z)^{\left[\frac{g}{2}\right]}}dz
\end{align*}
and
\begin{align*}
     \mathcal{A}^e_{g-1,2,1}&=-\frac{q^{\frac{3g}{2}+2}}{2\pi i}\oint_{|z|=q^{-\frac{3}{2}}}\frac{z^g(1-\frac{1}{qz})\prod_P\mathcal{B}_P(z,q^{-1})}{(1-z)^2(1-q^{-1}z)(q^{-1}z)^{\left[\frac{g-1}{2}\right]}}dz,\\
    \mathcal{A}^e_{g-1,2,2}&=-\frac{q^{\frac{3g}{2}+2}}{2\pi i}\oint_{|z|=q^{-\frac{3}{2}}}\frac{z^{g-1}\prod_P\mathcal{B}_P(z,q^{-1})}{q(1-z)(1-q^{-1}z)(q^{-1}z)^{\left[\frac{g-1}{2}\right]}}dz.
\end{align*}
Using the substitution $z=(qu)^{-1}$ we have
\begin{equation*}
    \mathcal{A}^e_{g,2,1}=\frac{q^{\frac{5g+5}{2}}}{2\pi i}\oint_{|u|=\sqrt{q}}\frac{(1-u)\prod_P\mathcal{B}_P\left(\frac{1}{qu},\frac{1}{q}\right)}{(1-qu)^2(1-q^2u)(q^2u)^{\left[\frac{g-1}{2}\right]}}du,
\end{equation*}
\begin{equation*}
    \mathcal{A}^e_{g,2,2}=-\frac{q^{\frac{5g+3}{2}}}{2\pi i}\oint_{|u|=\sqrt{q}}\frac{\prod_P\mathcal{B}_P\left(\frac{1}{qu},\frac{1}{q}\right)}{(1-qu)(1-q^2u)(q^2u)^{\left[\frac{g-1}{2}\right]}}du,
\end{equation*}
\begin{equation*}
    \mathcal{A}^e_{g-1,2,1}=\frac{q^{\frac{5g}{2}+3}}{2\pi i}\oint_{|u|=\sqrt{q}}\frac{(1-u)\prod_P\mathcal{B}_P\left(\frac{1}{qu},\frac{1}{q}\right)}{(1-qu)^2(1-q^2u)(q^2u)^{\left[\frac{g}{2}\right]}}du,
\end{equation*}
\begin{equation*}
 \mathcal{A}^e_{g-1,2,2}=-\frac{q^{\frac{5g}{2}+2}}{2\pi i}\oint_{|u|=\sqrt{q}}\frac{\prod_P\mathcal{B}_P\left(\frac{1}{qu},\frac{1}{q}\right)}{(1-qu)(1-q^2u)(q^2u)^{\left[\frac{g}{2}\right]}}du,
\end{equation*}
\begin{equation*}
    \mathcal{A}^o_{g,2}=-\frac{q^{\frac{5g+5}{2}}}{2\pi i}\oint_{|u|=\sqrt{q}}\frac{\prod_P\mathcal{B}_P\left(\frac{1}{qu},\frac{1}{q}\right)}{(1-qu)(1-q^2u)(q^2u)^{\left[\frac{g}{2}\right]}}du
\end{equation*}
and
\begin{equation*}
    \mathcal{A}^o_{g-1,2}=-\frac{q^{\frac{5g}{2}+1}}{2\pi i}\oint_{|u|=\sqrt{q}}\frac{\prod_P\mathcal{B}_P\left(\frac{1}{qu},\frac{1}{q}\right)}{u(1-qu)(1-q^2u)(q^2u)^{\left[\frac{g-1}{2}\right]}}du.
\end{equation*}
Using a variant of (\ref{eq:5.17}), we have
\begin{equation}
  \mathcal{A}^e_{g,2,1}=- \frac{q^{\frac{5g+7}{2}}}{\zeta_{\mathbb{A}}(2)}\frac{1}{2\pi i}\oint_{|u|=\sqrt{q}}\frac{\mathcal{C}(u)}{(1-qu)(1-q^2u)(q^2u)^{\left[\frac{g-1}{2}\right]+1}}du,
\end{equation}
\begin{equation}
   \mathcal{A}^e_{g,2,2}=\frac{q^{\frac{5g+5}{2}}}{\zeta_{\mathbb{A}}(2)}\frac{1}{2\pi i}\oint_{|u|=\sqrt{q}}\frac{\mathcal{C}(u)}{(1-u)(1-q^2u)(q^2u)^{\left[\frac{g-1}{2}\right]+1}}du,
\end{equation}
\begin{equation}
    \mathcal{A}^e_{g-1,2,1}=-\frac{q^{\frac{5g}{2}+4}}{\zeta_{\mathbb{A}}(2)}\frac{1}{2\pi i}\oint_{|u|=\sqrt{q}}\frac{\mathcal{C}(u)}{(1-qu)(1-q^2u)(q^2u)^{\left[\frac{g}{2}\right]+1}}du,
\end{equation}
\begin{equation}
    \mathcal{A}^e_{g-1,2,2}=\frac{q^{\frac{5g}{2}+3}}{\zeta_{\mathbb{A}}(2)}\frac{1}{2\pi i}\oint_{|u|=\sqrt{q}}\frac{\mathcal{C}(u)}{(1-u)(1-q^2u)(q^2u)^{\left[\frac{g}{2}\right]+1}}du,
\end{equation}
\begin{equation}
    \mathcal{A}_{g,2}^o=\frac{q^{\frac{5g+7}{2}}}{\zeta_{\mathbb{A}}(2)}\frac{1}{2\pi i}\oint_{|u|=\sqrt{q}}\frac{\mathcal{C}(u)}{(1-u)(1-q^2u)(q^2u)^{\left[\frac{g}{2}\right]+1}}du
\end{equation}
and
\begin{equation}
    \mathcal{A}_{g-1,2}^o=\frac{q^{\frac{5g}{2}+4}}{\zeta_{\mathbb{A}}(2)}\frac{1}{2\pi i}\oint_{|u|=\sqrt{q}}\frac{\mathcal{C}(u)}{(1-u)(1-q^2u)(q^2u)^{\left[\frac{g-1}{2}\right]+2}}du.
\end{equation}
Inside the circle $|u|=\sqrt{q}$, the integrals all have poles at $u=0,u=q^{-1}$ and $u=q^{-2}$ of varying orders, thus using the Residue Theorem, we have that
\begin{equation}
    \mathcal{A}^e_{g,2,1}=- \frac{q^{\frac{5g+3}{2}-2\left[\frac{g-1}{2}\right]}}{\zeta_{\mathbb{A}}(2)} \sum_{n=0}^{\left[\frac{g-1}{2}\right]}\frac{\mathcal{C}^{(n)}(0)}{n!}\sum_{k=(\left[\frac{g-1}{2}\right]-n)}^{2(\left[\frac{g-1}{2}\right]-n)}q^k-\frac{q^{\frac{5g+3}{2}-\left[\frac{g-1}{2}\right]}}{\zeta_{\mathbb{A}}(2)}\frac{\mathcal{C}(q^{-1})}{q-1}+\frac{q^{\frac{5g+5}{2}}}{\zeta_{\mathbb{A}}(2)}\frac{\mathcal{C}(q^{-2})}{q-1},
\end{equation}
\begin{equation}
    \mathcal{A}_{g,2,2}^e=\frac{q^{\frac{5g+1}{2}-2\left[\frac{g-1}{2}\right]}}{\zeta_{\mathbb{A}}(2)}\sum_{n=0}^{\left[\frac{g-1}{2}\right]}\frac{\mathcal{C}^{(n)}(0)}{n!}\sum_{k=0}^{\left[\frac{g-1}{2}\right]-n}q^{2k}-\frac{q^{\frac{5g+5}{2}}}{\zeta_{\mathbb{A}}(2)}\frac{\mathcal{C}(q^{-2})}{q^2-1},
\end{equation}
\begin{equation}
    \mathcal{A}^e_{g-1,2,1}=- \frac{q^{\frac{5g}{2}+2-2\left[\frac{g}{2}\right]}}{\zeta_{\mathbb{A}}(2)}\sum_{n=0}^{\left[\frac{g}{2}\right]}\frac{\mathcal{C}^{(n)}(0)}{n!}\sum_{k=(\left[\frac{g}{2}\right]-n)}^{2(\left[\frac{g}{2}\right]-n)}q^k-\frac{q^{\frac{5g}{2}+2-\left[\frac{g}{2}\right]}}{\zeta_{\mathbb{A}}(2)}\frac{\mathcal{C}(q^{-1})}{q-1}+\frac{q^{\frac{5g}{2}+3}}{\zeta_{\mathbb{A}}(2)}\frac{\mathcal{C}(q^{-2})}{q-1},
\end{equation}
\begin{equation}
    \mathcal{A}^e_{g-1,2,2}=\frac{q^{\frac{5g}{2}+1-2\left[\frac{g}{2}\right]}}{\zeta_{\mathbb{A}}(2)}\sum_{n=0}^{\left[\frac{g}{2}\right]}\frac{\mathcal{C}^{(n)}(0)}{n!}\sum_{k=0}^{\left[\frac{g}{2}\right]-n}q^{2k}-\frac{q^{\frac{5g}{2}+3}}{\zeta_{\mathbb{A}}(2)}\frac{\mathcal{C}(q^{-2})}{q^2-1},
\end{equation}
\begin{equation}
    \mathcal{A}^o_{g,2}=\frac{q^{\frac{5g+3}{2}-2\left[\frac{g}{2}\right]}}{\zeta_{\mathbb{A}}(2)}\sum_{n=0}^{\left[\frac{g}{2}\right]}\frac{\mathcal{C}^{(n)}(0)}{n!}\sum_{k=0}^{\left[\frac{g}{2}\right]-n}q^{2k}-\frac{q^{\frac{5g+7}{2}}}{\zeta_{\mathbb{A}}(2)}\frac{\mathcal{C}(q^{-2})}{q^2-1},
\end{equation}
\begin{equation}
    \mathcal{A}^o_{g-1,2}=\frac{q^{\frac{5g}{2}-2\left[\frac{g-1}{2}\right]}}{\zeta_{\mathbb{A}}(2)}\sum_{n=0}^{\left[\frac{g-1}{2}\right]+1}\frac{\mathcal{C}^{(n)}(0)}{n!}\sum_{k=0}^{\left[\frac{g-1}{2}\right]+1-n}q^{2k}-\frac{q^{\frac{5g}{2}+4}}{\zeta_{\mathbb{A}}(2)}\frac{\mathcal{C}(q^{-2})}{q^2-1}
\end{equation}
To complete the proof, we want to show that 
\begin{equation}\label{eq:5.44}
    \mathcal{A}^o_{g-1,1,2}+\hat{\mathcal{A}}_2+\tilde{\mathcal{A}}_2-\mathcal{A}^e_{g,2,1}-\mathcal{A}^e_{g,2,2}-\mathcal{A}^e_{g-1,2,1}-\mathcal{A}^e_{g-1,2,2}-\mathcal{A}^o_{g,2}-\mathcal{A}^o_{g-1,2}, 
\end{equation}
equals zero. Using the fact that (see \cite{Jung2014AEnsemble}, section 1)
\begin{equation}
   q^{g-\left[\frac{g-1}{2}\right]}-q^{\left[\frac{g}{2}\right]+1}=0 \text{ and } q^{g-\left[\frac{g}{2}\right]}-q^{\left[\frac{g-1}{2}\right]+1}=0,
\end{equation}
we see that the terms corresponding to residue at $u=q^{-1}$ and $u=q^{-2}$ equal zero. Finally, we use induction on $g$ (see appendix) to show that the terms corresponding to the residue at $u=0$ equals zero. Thus (\ref{eq:5.44}) equals zero. 
\end{proof}
\subsection{Contribution from $\mathcal{B}$ Terms}
We will now focus on evaluating the $\mathcal{B}$ terms which corresponds to the pole at $w=qz$, these will give the secondary main terms in Proposition 5.1. Let
\begin{equation*}
    \mathcal{B}=\mathcal{B}^e_{g,1}-\mathcal{B}^e_{g,2}+\mathcal{B}^e_{g-1,1}-\mathcal{B}^e_{g-1,2}+\mathcal{B}^o_{g,1}-\mathcal{B}^o_{g,2}+\mathcal{B}^o_{g-1,1}-\mathcal{B}^o_{g-1,2},
\end{equation*}
then, the main result in this subsection is the following.
\begin{lemma}
Using the same notation as before, we have that
\begin{equation*}
    \mathcal{B}=q^{\frac{2g+2}{3}}\mathcal{R}(2g+2)+C_1q^{\frac{g}{6}+\left[\frac{g}{2}\right]}+C_2q^{\frac{g}{6}+\left[\frac{g-1}{2}\right]}+O(q^{\frac{g}{2}(1+\epsilon)}),
\end{equation*}
where $\mathcal{R}$ is a polynomial of degree 1 given by (\ref{eq:5.35}) and $C_1$ and $C_2$ are constants given by (\ref{eq:5.36}) and (\ref{eq:5.37}) respectively. 
\end{lemma}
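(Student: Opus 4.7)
The plan is to evaluate each of the eight integrals $\mathcal{B}^e_{k,\ell}$, $\mathcal{B}^o_{k,\ell}$ (for $k\in\{g,g-1\}$, $\ell\in\{1,2\}$) by deforming the $z$-contour outward from $|z|=q^{-3/2}$ and applying the residue theorem. The only poles crossed in such a deformation lie at the three cube roots $z=q^{-4/3}\omega$ (with $\omega^3=1$), coming from the factor $(1-q^4z^3)$ in the denominator; these are double poles for the $\ell=1$ integrals and simple for the $\ell=2$ integrals. The target contour is $|z|=q^{-1-\epsilon}$, which by Lemma 5.3 still lies in the region of absolute convergence of $\prod_P\mathcal{D}_P(z,qz)$ (the conditions of the lemma, applied with $w=qz$, reduce to $|z|<q^{-1}$). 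The other singularities — at $z=q^{-1}\omega$ (present only in the $\ell=2$ variants, sitting exactly on the boundary $|z|=q^{-1}$), at $z=q^{-2}$, at $z=1$, and at $z=0$ — are not crossed.

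First I would bound the integral on the deformed contour $|z|=q^{-1-\epsilon}$ using $\prod_P\mathcal{D}_P(z,qz)\ll 1$ in that region together with the explicit sizes of $z^g$, of $(q^4z^3)^{-[g/2]}$ or $(q^3z^3)^{-[g/2]}$, and of the remaining rational factors. A direct size check (analogous to the one in Florea, \cite{Florea2017}, Section 6 for the corresponding tail estimates) shows that each of the eight boundary integrals contributes $O(q^{\frac{g}{2}(1+\epsilon)})$, which is absorbed into the stated error term.

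Second I would compute the residues at $z=q^{-4/3}\omega$. For the four $\ell=1$ integrals the pole is double, so the residue is $\tfrac{d}{dz}[h(z)]$ evaluated at $z=q^{-4/3}\omega$, times an explicit $\omega$-dependent prefactor coming from expanding $(1-q^4z^3)^2$ around the root; here $h(z)$ is the analytic remainder. The derivative of $z^g$ contributes a factor linear in $g$, and differentiating $(q^4z^3)^{-[g/2]}$ or $(q^4z^3)^{-[(g-1)/2]}$ contributes factors linear in $[\tfrac{g}{2}]$ or $[\tfrac{g-1}{2}]$; using relations such as (\ref{eq:5.25}) these linear-in-$g$ terms combine across the four $\ell=1$ integrals into a single linear polynomial in $2g+2$, yielding $q^{(2g+2)/3}\mathcal{R}(2g+2)$. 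For the four $\ell=2$ integrals the pole is simple and the residue is read off directly: the prefactor $q^{(3g+3)/2}$ (resp.\ $q^{\frac{3g}{2}+2}$) times $z^g|_{z=q^{-4/3}\omega}=q^{-4g/3}\omega^g$ and the factor $(q^3z^3)^{-[g/2]}=q^{[g/2]}$ (resp.\ $q^{[(g-1)/2]}$) produce contributions of exact magnitude $q^{\frac{g}{6}+[\frac{g}{2}]}$ and $q^{\frac{g}{6}+[\frac{g-1}{2}]}$. Pairing the corresponding even and odd $f$-degree variants $(\mathcal{B}^e_{k,2},\mathcal{B}^o_{k,2})$ and summing over $\omega$ yields the constants $C_1$ and $C_2$, expressible in terms of $\prod_P\mathcal{D}_P(q^{-4/3}\omega,q^{-1/3}\omega)$ and the prefactors.

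The main obstacle will be the careful aggregation of the three cube-root contributions: a priori $\sum_{\omega^3=1}\omega^g$ depends on $g\bmod 3$, but the $\omega$-dependence of the remaining factors, in particular of $\prod_P\mathcal{D}_P(q^{-4/3}\omega,q^{-1/3}\omega)$ and of $(1-q^3z^2)|_{z=q^{-4/3}\omega}$, must combine with the $\omega^g$ weight so as to produce a function of $g$ with no residual period-$3$ oscillation. Verifying this and then collecting coefficients is essentially a bookkeeping exercise, but one has to track the many $q$-power normalisations in the prefactors and the subtle parity effects from $[\tfrac{g}{2}]$ and $[\tfrac{g-1}{2}]$. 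Once this is done the explicit formulas for $\mathcal{R}$, $C_1$ and $C_2$ are identified with (\ref{eq:5.35}), (\ref{eq:5.36}) and (\ref{eq:5.37}), completing the proof.
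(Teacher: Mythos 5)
Your proposal takes essentially the same route as the paper: write each $\mathcal{B}^j_{k,\ell}$ as a contour integral over $|z|=q^{-3/2}$, enlarge the contour to $|z|=q^{-1-\epsilon}$ (inside the region $q^{-2}<|z|<q^{-1}$ of absolute convergence of $\prod_P\mathcal{D}_P(z,qz)$), bound the shifted integral by $O(q^{\frac{g}{2}(1+\epsilon)})$, and extract the secondary terms from the residues of the double ($\ell=1$) and simple ($\ell=2$) poles coming from the factor $1-q^4z^3$, the double poles giving the linear-in-$g$ polynomial $\mathcal{R}$ and the simple poles giving the $q^{\frac{g}{6}+\left[\frac{g}{2}\right]}$ and $q^{\frac{g}{6}+\left[\frac{g-1}{2}\right]}$ terms (with the small correction that $C_1$ combines $\mathcal{B}^e_{g,2}$ with $\mathcal{B}^o_{g-1,2}$ and $C_2$ combines $\mathcal{B}^o_{g,2}$ with $\mathcal{B}^e_{g-1,2}$, rather than pairing equal subscripts $k$). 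The one genuine divergence is your handling of the roots of $z^3=q^{-4}$: the paper computes the residue only at the real root $z=q^{-4/3}$, as is visible from (\ref{eq:5.35})--(\ref{eq:5.37}), which involve only $\prod_P\mathcal{D}_P(q^{-\frac{4}{3}},q^{-\frac{1}{3}})$, so the period-three aggregation you identify as ``the main obstacle'' never arises in the paper's argument; if, as in your plan, one sums over all three roots (which do all lie on $|z|=q^{-4/3}$, are crossed by the deformation, and have residues of the same magnitude $q^{\frac{2g}{3}}$, so they cannot be absorbed into the error term), then the asserted cancellation of the $g\bmod 3$ oscillation in $\sum_{\omega^3=1}\omega^{g}F(\omega)$ is a substantive claim that must actually be proved rather than deferred as bookkeeping, since for a generic $F$ it is false.
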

\begin{proof}

For each $j\in\{o,e\}$ and $k\in\{g,g-1\}$ we write 
\begin{equation*}
    \mathcal{B}^j_{k,1}=-\frac{q^{2g+2}}{2\pi i}\oint_{|z|=q^{-\frac{3}{2}}}F^j_{k,1}(z)dz\hspace{0.5cm}\text{and}\hspace{0.5cm}\mathcal{B}^j_{k,2}=-\frac{q^{\frac{3g}{2}+2}}{2\pi i}\oint_{|z|=q^{-\frac{3}{2}}}F^j_{k,2}(z)dz. 
\end{equation*}
Enlarging the contour $|z|=q^{-\frac{3}{2}}$ to $|z|=q^{-1-\epsilon}$ we encounter a double pole at $z=q^{-\frac{4}{3}}$ of $F^j_{k,1}(z)$ and a simple pole at $z=q^{-\frac{4}{3}}$ of $F^j_{k,2}(z)$. From Lemma 5.3, $\prod_P\mathcal{D}_P(z,qz)$ is absolutely convergent when $q^{-2}<|z|<q^{-1}$. Then, we have
\begin{equation*}
    \mathcal{B}^j_{k,1}=q^{2g+2}\text{Res}(F^j_{k,1}(z);z=q^{-\frac{4}{3}})-\frac{q^{2g+2}}{2\pi i}\oint_{|z|=q^{-1-\epsilon}}F^j_{k,1}(z)dz
\end{equation*}
and 
\begin{equation*}
    \mathcal{B}^j_{k,2}=q^{\frac{3g}{2}+2}\text{Res}(F^j_{k,2}(z);z=q^{-\frac{4}{3}})-\frac{q^{\frac{3g}{2}+2}}{2\pi i}\oint_{|z|=q^{-1-\epsilon}}F^j_{k,2}(z)dz
\end{equation*}
where the second terms are bounded by $O(q^{\frac{g}{2}(1+\epsilon)})$. Then
\begin{align*}
    \mathcal{B}_{g,1}^e&=q^{\frac{2g+2}{3}}R_1(g)+O(q^{\frac{g}{2}(1+\epsilon)}), \hspace{0.8cm} \mathcal{B}^e_{g-1,1}=q^{\frac{2g+2}{3}}R_2(g)+O(q^{\frac{g}{2}(1+\epsilon)}),\\
    \mathcal{B}_{g,1}^o&=q^{\frac{2g+2}{3}}R_3(g)+O(q^{\frac{g}{2}(1+\epsilon)})\hspace{0.2cm}\text{and}\hspace{0.2cm}\mathcal{B}_{g-1,1}^o=q^{\frac{2g+2}{3}}R_4(g)+O(q^{\frac{g}{2}(1+\epsilon)}).
\end{align*}
where each $R_i$ is a linear polynomial whose coefficients can be computed explicitly. Let
\begin{equation*}
    q^{\frac{2g+2}{3}}\mathcal{R}(2g+2)=q^{\frac{2g+2}{3}}R_1(g)+q^{\frac{2g+2}{3}}R_2(g)+q^{\frac{2g+2}{3}}R_3(g)+q^{\frac{2g+2}{3}}R_4(g),
\end{equation*}
where
\begin{equation}\label{eq:5.35}
    \mathcal{R}(x)=\frac{\zeta_{\mathbb{A}}\left(\frac{5}{3}\right)\zeta_{\mathbb{A}}\left(\frac{7}{3}\right)}{9q^{\frac{4}{3}}\zeta_{\mathbb{A}}\left(\frac{4}{3}\right)}\prod_P\mathcal{D}_P(q^{-\frac{4}{3}},q^{-\frac{1}{3}})\left[\frac{x}{2}C_3-C_4-\frac{2C_3}{q^{\frac{4}{3}}}\frac{\frac{d}{dz}\prod_P\mathcal{D}_P(z,qz)}{\prod_P\mathcal{D}_P(z,qz)}|_{z=q^{-\frac{4}{3}}}\right],
\end{equation}
\begin{equation*}
        C_3=1-q-q^{\frac{7}{6}}+q^{-\frac{1}{6}}
\end{equation*}
and
\begin{equation*}
    C_4=4C_3\zeta_{\mathbb{A}}\left(\frac{4}{3}\right)-C_3\zeta_{\mathbb{A}}\left(\frac{5}{3}\right)+\frac{2(q-1)\zeta_{\mathbb{A}}\left(\frac{7}{3}\right)}{q^{\frac{4}{3}}}+4(q-1)+2q^{\frac{1}{6}}\zeta_{\mathbb{A}}\left(\frac{7}{3}\right)(1+q).
\end{equation*}
Also we write
\begin{align*}
    \mathcal{B}_{g,2}^e&=-q^{\frac{g}{6}+\left[\frac{g}{2}\right]}C^e_1+O(q^{\frac{g}{2}(1+\epsilon)}), \hspace{0.8cm} \mathcal{B}^e_{g-1,2}=-q^{\frac{g}{6}+\left[\frac{g-1}{2}\right]}C^e_2+O(q^{\frac{g}{2}(1+\epsilon)}),\\
    \mathcal{B}_{g,2}^o&=-q^{\frac{g}{6}+\left[\frac{g-1}{2}\right]}C^o_1+O(q^{\frac{g}{2}(1+\epsilon)})\hspace{0.2cm}\text{and}\hspace{0.2cm}\mathcal{B}_{g-1,2}^o=-q^{\frac{g}{6}+\left[\frac{g}{2}\right]}C^o_2+O(q^{\frac{g}{2}(1+\epsilon)}).
\end{align*}
where each $C^j_{\ell}$ are constants that can be explicitly computed. Let
\begin{equation*}
    C_1=C_1^e+C^o_2\hspace{1cm}\text{and}\hspace{1cm}C_2=C_1^o+C_2^e,
\end{equation*}
then we have
\begin{equation}\label{eq:5.36}
    C_1=\frac{\zeta_{\mathbb{A}}\left(\frac{5}{3}\right)\zeta_{\mathbb{A}}\left(\frac{7}{3}\right)\zeta_{\mathbb{A}}(2)}{\zeta_{\mathbb{A}}\left(\frac{4}{3}\right)}(q^{-\frac{1}{6}}-q^{-\frac{7}{6}}+q^{-\frac{4}{3}}-1)\prod_P\mathcal{D}_P(q^{-\frac{4}{3}},q^{-\frac{1}{3}})
\end{equation}
and
\begin{equation}\label{eq:5.37}
    C_2=\frac{\zeta_{\mathbb{A}}\left(\frac{5}{3}\right)\zeta_{\mathbb{A}}\left(\frac{7}{3}\right)\zeta_{\mathbb{A}}(2)}{\zeta_{\mathbb{A}}\left(\frac{4}{3}\right)}(q^{\frac{1}{3}}-q^{-\frac{2}{3}}+q^{\frac{11}{6}}-q)\prod_P\mathcal{D}_P(q^{-\frac{4}{3}},q^{-\frac{1}{3}}).
\end{equation}
Moreover,
\begin{equation*}
    \prod_P\mathcal{D}_P(q^{-\frac{4}{3}},q^{-\frac{1}{3}})=\prod_P\left(1-\frac{|P|^{\frac{4}{3}}+|P|^{\frac{2}{3}}+|P|^{\frac{1}{3}}+1}{(|P|^{\frac{4}{3}}+|P|)^2}\right)
\end{equation*}
and 
\begin{equation*}
    \frac{1}{q^{\frac{4}{3}}}\frac{\frac{d}{dz}\prod_P\mathcal{D}_P(z,qz)}{\prod_P\mathcal{D}_P(z,qz)}|_{|z|=q^{-\frac{4}{3}}}=-\sum_{P}\frac{d(P)(|P|-1)(|P|^{\frac{1}{3}}+1)}{(|P|^{\frac{1}{3}}-1)(|P|^{\frac{4}{3}}+|P|)^2}.
\end{equation*}
\end{proof}
\par\noindent
Proposition 5.1 is immediate from Lemma 5.4, Lemma 5.5, Lemma 5.6, Lemma 5.7 and (\ref{eq:5.1}). 
\section{Error From Non-Square $V$}
Let 
\begin{equation}\label{eq:6.1}
    \mathcal{S}(V\neq\square)=\mathcal{S}^o(V\neq\square)+\mathcal{S}^e(V\neq\square),
\end{equation}
where
\begin{equation}\label{eq:6.2}
    \mathcal{S}^o(V\neq\square)=\mathcal{S}^o_{g,1}(V\neq\square)-\mathcal{S}^o_{g,2}(V\neq\square)+\mathcal{S}^o_{g-1,1}(V\neq\square)-\mathcal{S}^o_{g-1,2}(V\neq\square)
\end{equation}
and
\begin{equation}\label{eq:6.3}
      \mathcal{S}^e(V\neq\square)=\mathcal{S}^e_{g,1}(V\neq\square)-\mathcal{S}^e_{g,2}(V\neq\square)+\mathcal{S}^e_{g-1,1}(V\neq\square)-\mathcal{S}^e_{g-1,2}(V\neq\square).
\end{equation}
Then, in this section we will bound the term $\mathcal{S}(V\neq\square)$. The next Proposition is the main result in this section.
\begin{prop}
Using the notation described previously, we have, for any $\epsilon>0$
\begin{equation}
    \mathcal{S}(V\neq\square)\ll q^{\frac{g}{2}(1+\epsilon)}.
\end{equation}
\end{prop}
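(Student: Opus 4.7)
The plan is to adapt Section 7 of \cite{Florea2017} to bound separately each of the eight pieces $\mathcal{S}^{o}_{k,\ell}(V\neq\square)$ and $\mathcal{S}^{e}_{k,\ell}(V\neq\square)$, for $k\in\{g,g-1\}$ and $\ell\in\{1,2\}$, by $O(q^{g(1+\epsilon)/2})$. Since all eight have the same overall shape, it suffices to describe the argument for a representative term, say $\mathcal{S}^{e}_{g,1}(V\neq\square)$, and verify that the estimates go through uniformly in $k$ and $\ell$; the four terms for $d(f)$ odd are handled analogously, using (\ref{eq:3.9}) in place of (\ref{eq:3.18}).

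The crucial step is to trade the character $\chi_f$, averaged over $f$, for a character attached to $V$, averaged over $f$. Writing $V=V_1 V_2^{2}$ with $V_1$ monic square-free (since $V\neq\square$ forces $d(V_1)\geq 1$), the standard manipulation of the generalized Gauss sum in \cite{Florea2017}, Section 3, gives, for $\gcd(f,V_1)=1$, an identity of the shape $G(V,\chi_f)=\sqrt{|f|}\,\chi_f(V_1)\cdot(\text{local factors at primes of }V_1V_2)$. Quadratic reciprocity (valid since $q\equiv 1\pmod 4$) then lets one view $\chi_f(V_1)=\chi_{V_1}(f)$. Inserting this into $S_1^e(V;f,C)$ and applying Lemma 2.8, the inner sum over $f\in\mathbb{A}^{+}_{\leq k}$ becomes a contour integral involving $\mathcal{L}(u,\chi_{V_1})$ multiplied by an auxiliary Dirichlet series that converges absolutely on a disc $|u|<q^{-1/2+\delta}$.

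I would then shift the contour from $|u|=q^{-1-\epsilon}$ out to $|u|=q^{-1/2-\epsilon}$, encountering no poles in the process. By Weil's theorem, $\mathcal{L}(u,\chi_{V_1})$ is a polynomial of degree $d(V_1)-1-\lambda$ whose zeros all lie on $|u|=q^{-1/2}$, and hence $|\mathcal{L}(u,\chi_{V_1})|\ll |V_1|^{\epsilon}$ uniformly on the new contour. Bounding the integral trivially yields an estimate of the form $\bigl|\sum_{d(f)\leq k}\chi_{V_1}(f)(\ldots)\bigr|\ll q^{k/2}|V_1|^{\epsilon}$. Feeding this back through the outer sums over $V_1,V_2$ and $C$, whose total degrees are controlled by $d(V_1)+2d(V_2)+2d(C)\leq 3g+O(1)$, and multiplying by the prefactors $q^{2g+2}$, $q^{(3g+3)/2}$, $q^{2g+5/2}$ or $q^{3g/2+2}$, produces the claimed bound $O(q^{g(1+\epsilon)/2})$ after elementary bookkeeping.

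The main obstacle is the piece arising from $S_2^e(V;f,C)$ in (\ref{eq:3.22}): unlike in Florea's imaginary case, the Poisson formula of Lemma 2.7 here contributes a sum over $V$ of a \emph{fixed} odd degree for which $V$ can never be a square, and there is no matched main term from $V=\square$ to help control the length of the dual sum. One therefore has to verify, by a slightly finer case analysis tracking how the ranges of $d(f)$ and $d(C)$ interlock, that in this worst case the combined exponent of $q$ still does not exceed $g(1+\epsilon)/2$. Once this is established the eight pieces assemble via (\ref{eq:6.1})--(\ref{eq:6.3}) to give the Proposition.
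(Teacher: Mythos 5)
Your overall strategy is the same as the paper's, which in turn is Florea's: split $\mathcal{S}(V\neq\square)$ into the pieces indexed by $k\in\{g,g-1\}$, $\ell\in\{1,2\}$ and parity of $d(f)$, write $V=V_1V_2^2$, convert the $f$-average of $G(V,\chi_f)$ into an expression governed by $\mathcal{L}(\cdot,\chi_{V_1})$, and invoke Weil's Riemann Hypothesis to get square-root cancellation in $f$. The paper's proof is shorter only because it packages the entire $f$-sum (together with the Euler factor coming from the $C\mid f^{\infty}$ sum via (\ref{eq:6.5})) into the quantity $\delta_{V;n}(u)$ and quotes the bound (\ref{eq:6.6}) from Florea's Section 7, after which the bookkeeping over $n=d(f)$, $m=d(C)$ and $V$ is exactly the ``elementary bookkeeping'' you describe. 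One remark: the feature you single out as the main obstacle, namely the $S^e_2$ pieces from (\ref{eq:3.22}) where $d(V)$ is odd and forced to be non-square, causes no difficulty at all; there $d(V)=d(f)-2g-1+2d(C)\leq g-1$, so the count of $V$'s and the bound on $\delta_{V;n}(u)$ apply verbatim, and the paper treats $\tilde{\mathcal{S}}^e_{k,\ell,2}$ and $\hat{\mathcal{S}}^e_{k,\ell,2}$ identically to the other pieces.

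The step in your sketch that does not stand as written is the deduction ``by Weil's theorem the zeros lie on $|u|=q^{-1/2}$, hence $|\mathcal{L}(u,\chi_{V_1})|\ll|V_1|^{\epsilon}$ on $|u|=q^{-1/2-\epsilon}$.'' What the Riemann Hypothesis gives directly is $|\mathcal{L}(u,\chi_{V_1})|=\prod_j|1-\alpha_ju|\leq(1+q^{-\epsilon})^{d(V_1)-1}$, which for fixed $q$ and small $\epsilon$ is of size $|V_1|^{\log_q 2}$, not $|V_1|^{\epsilon}$; moving the contour further inward to shrink this factor inflates the $|u|^{-d(f)}$ term by more than it saves. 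Since $d(V_1)$ can be as large as $g+O(1)$ in these sums, carrying the factor $2^{d(V_1)}$ through your bookkeeping yields an extra $q^{g\log_q 2}=2^{g}$, i.e. a final bound of the shape $q^{\frac{g}{2}(1+\epsilon)+g\log_q 2}$ --- which is essentially the loss already present in the error terms of Andrade--Keating and Jung that this method is designed to remove. Eliminating that $2^{g}$ is precisely the content of the estimate (\ref{eq:6.6}) (equivalently, of Florea's Section 7), so you must either reproduce that finer argument (a genuine Lindel\"of-type bound on the critical circle, not the trivial product estimate) or cite the packaged lemma as the paper does. With (\ref{eq:6.6}) in hand, the rest of your plan, including the uniform treatment of all eight pieces, goes through.
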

\par\noindent
To prove Proposition 6.1, we will need the following results (see \cite{Florea2017}, section 7). We have 
\begin{equation}\label{eq:6.5}
    \sum_{\substack{C|f^{\infty}\\C\in\mathbb{A}^+_m}}\frac{1}{|C|^2}=\frac{1}{2\pi i}\oint_{|u|=r_1}\frac{1}{q^{2m}u^{m+1}\prod_{P|f}(1-u^{d(P)})}du
\end{equation}
with $r_1<1$. For a non-square $V\in\mathbb{A}^+$ and positive integer $n$, let
\begin{equation*}
    \delta_{V;n}(u)=\sum_{f\in\mathbb{A}^+_n}\frac{G(V,\chi_f)}{\sqrt{|f|}\prod_{P|f}(1-u^{d(P)})}.
\end{equation*}
Then, if $|u|=q^{-\epsilon}$, then we have 
\begin{equation}\label{eq:6.6}
    |\delta_{V;n}(u)|\ll q^{\frac{n}{2}(1+\epsilon)}.
\end{equation}
\subsection{Bounding $\mathcal{S}^e(V\neq\square)$}
For each $k\in\{g,g-1\}$ and $\ell\in\{1,2\}$, we have
\begin{equation*}
    \mathcal{S}^e_{k,\ell}(V\neq\square)=\mathcal{S}^e_{k,\ell,1}(V\neq\square)+\mathcal{S}^e_{k,\ell,2}(V\neq\square).
\end{equation*}
Write 
\begin{equation*}
    \mathcal{S}^e_{k,\ell,1}(V\neq\square)=\tilde{\mathcal{S}}^e_{k,\ell,1}(V\neq\square)-\hat{\mathcal{S}}^e_{k,\ell,1}(V\neq\square)
\end{equation*}
and
\begin{equation*}
    \mathcal{S}^e_{k,\ell,2}(V\neq\square)=\tilde{\mathcal{S}}^e_{k,\ell,2}(V\neq\square)-\hat{\mathcal{S}}^e_{k,\ell,2}(V\neq\square),
\end{equation*}
where $\tilde{\mathcal{S}}^e_{k,\ell,1}(V\neq\square)$, and $\hat{\mathcal{S}}^e_{k,\ell,1}(V\neq\square)$ denote the sum over non-square $V$ of degree $d(V)\leq d(f)-2g-4+2d(C)$ and $d(V)\leq d(f)-2g-2+2d(C)$ respectively. Similarly $\tilde{\mathcal{S}}^e_{k,\ell,2}(V\neq\square)$ and  $\hat{\mathcal{S}}^e_{k,\ell,2}(V\neq\square)$ denote the sum over $V$ with $d(V)=d(f)-2g-1+2d(C)$ and $d(V)=d(f)-2g-3+2d(C)$ respectively. Then, by (\ref{eq:6.5}), we can write
\begin{align*}
    \tilde{\mathcal{S}}^e_{g,1,1}(V\neq\square)&=\frac{(q-1)q^{2g+2}}{2\pi i}\oint_{|u|=r_1}\sum_{n=0}^{\left[\frac{g}{2}\right]}\frac{1}{q^{2k}}\sum_{m=g-n+2}^g\frac{1}{q^{2m}u^{m+1}}\sum_{\square\neq V\in\mathbb{A}^+_{\leq 2n-2g-4+2d(C)}}\delta_{V;2n}(u)du,\\
    \hat{\mathcal{S}}^e_{g,1,1}(V\neq\square)&=\frac{(q-1)q^{2g+1}}{2\pi i}\oint_{|u|=r_1}\sum_{n=0}^{\left[\frac{g}{2}\right]}\frac{1}{q^{2k}}\sum_{m=g-n+1}^g\frac{1}{q^{2m}u^{m+1}}\sum_{\square\neq V\in\mathbb{A}^+_{\leq 2n-2g-2+2d(C)}}\delta_{V;2n}(u)du
\end{align*}
and 
\begin{align*}
    \tilde{\mathcal{S}}^e_{g,1,2}(V\neq\square)&=\frac{q^{2g+1}}{2\pi i}\oint_{|u|=r_1}\sum_{n=0}^{\left[\frac{g}{2}\right]}\frac{1}{q^{2k}}\sum_{m=g-n+1}^g\frac{1}{q^{2m}u^{m+1}}\sum_{V\in\mathbb{A}^+_{2n-2g-1+2m}}\delta_{V;2n}(u)du,\\
      \hat{\mathcal{S}}^e_{g,1,2}(V\neq\square)&=\frac{q^{2g+2}}{2\pi i}\oint_{|u|=r_1}\sum_{n=0}^{\left[\frac{g}{2}\right]}\frac{1}{q^{2k}}\sum_{m=g-n+2}^g\frac{1}{q^{2m}u^{m+1}}\sum_{V\in\mathbb{A}^+_{2n-2g-3+2m}}\delta_{V;2n}(u)du,
\end{align*}
with $r_1<1$. Using (\ref{eq:6.6}), we can bound $\delta_{V;2n}(u)$ and trivially bounding the sum over $V$, we get that $\tilde{\mathcal{S}}^e_{g,1,1}(V\neq\square)\ll q^{\frac{g}{2}(1+\epsilon)}$, $\hat{\mathcal{S}}^e_{g,1,1}(V\neq\square)\ll q^{\frac{g}{2}(1+\epsilon)}$, $\tilde{\mathcal{S}}^e_{g,1,2}(V\neq\square)\ll q^{\frac{g}{2}(1+\epsilon)}$ and $\hat{\mathcal{S}}^e_{g,1,2}(V\neq\square)\ll q^{\frac{g}{2}(1+\epsilon)}$, thus $\mathcal{S}^e_{g,1}(V\neq\square)\ll q^{\frac{g}{2}(1+\epsilon)}$. Using the same calculations, we can bound $\mathcal{S}^e_{g,2}(V\neq\square), \mathcal{S}^e_{g-1,1}(V\neq\square)$ and $\mathcal{S}^e_{g-1,2}(V\neq\square)$ by $q^{\frac{g}{2}(1+\epsilon)}$, hence $\mathcal{S}^e(V\neq\square)\ll q^{\frac{g}{2}(1+\epsilon)}$  .
\subsection{Bounding $\mathcal{S}^o(V\neq\square)$} 
For $k\in\{g,g-1\}, \ell\in\{1,2\}$, let
\begin{equation*}
    \mathcal{S}^o_{k,\ell}(V\neq\square)=\tilde{\mathcal{S}}^o_{k,\ell}(V\neq\square)-\hat{\mathcal{S}}^o_{k,\ell}(V\neq\square),
\end{equation*}
where $\tilde{\mathcal{S}}^o_{k,\ell}$ and $\hat{\mathcal{S}}^o_{k,\ell}$ denotes the sum over non-square $V$ with $d(V)=d(f)-2g-3+2d(C)$ and $d(V)=d(f)-2g-1+2d(C)$ respectively. Then, using (\ref{eq:6.5}), we have 
\begin{equation*}
    \tilde{\mathcal{S}}^o_{g,1}(V\neq\square)=\frac{q^{2g+\frac{5}{2}}}{2\pi i}\oint_{|u|=r_1}\sum_{n=0}^{\left[\frac{g-1}{2}\right]}\frac{1}{q^{2n+1}}\sum_{m=g-n+1}^g\frac{1}{q^{2m}u^{m+1}}\sum_{\square\neq V\in\mathbb{A}^+_{2n-2g-2+2m}}\delta_{V;2n+1}(u)du
\end{equation*}
and
\begin{equation*}
     \hat{\mathcal{S}}^o_{g,1}(V\neq\square)=\frac{q^{2g+\frac{3}{2}}}{2\pi i}\oint_{|u|=r_1}\sum_{n=0}^{\left[\frac{g-1}{2}\right]}\frac{1}{q^{2n+1}}\sum_{m=g-n}^g\frac{1}{q^{2m}u^{m+1}}\sum_{\square\neq V\in\mathbb{A}^+_{2n-2g+2m}}\delta_{V;2n+1}(u)du
\end{equation*}
with $r_1<1$. Using (\ref{eq:6.6}) to bound $\delta_{V;2n+1}(u)$ and trivially bounding the sum over $V$, we get that $\hat{\mathcal{S}}^o_{g,1}(V\neq\square)\ll q^{\frac{g}{2}(1+\epsilon)}$ and $\tilde{\mathcal{S}}^o_{g,1}(V\neq\square)\ll q^{\frac{g}{2}(1+\epsilon)}$. Thus $\mathcal{S}^o_{g,1}(V\neq\square)\ll q^{\frac{g}{2}(1+\epsilon)}$. Similar calculations can be used to bound $\mathcal{S}^o_{g,2}(V\neq\square)$, $\mathcal{S}^o_{g-1,1}(V\neq\square)$ and $\mathcal{S}^o_{g-1,2}(V\neq\square)$ by $q^{\frac{g}{2}(1+\epsilon)}$. Therefore $\mathcal{S}^o(V\neq\square)\ll q^{\frac{g}{2}(1+\epsilon)}$, which proves Proposition 6.1.
\section{Proof of Theorem 1.4}
We combine the results from the previous sections to prove Theorem 1.4. \\
\begin{proof}[Proof of Theorem 1.4.]
Using (\ref{firsteq}), we have 
\begin{equation}\label{eq:7.1}
    \sum_{D\in\mathcal{H}_{2g+2}}L\left(\frac{1}{2},\chi_D\right)=\mathcal{S}_{g,1}-\mathcal{S}_{g,2}+\mathcal{S}_{g-1,1}-\mathcal{S}_{g-1,2}.
\end{equation}
Using equations stated in previous sections, we can rewrite (\ref{eq:7.1}) as 
\begin{equation}
    \sum_{D\in\mathcal{H}_{2g+2}}L\left(\frac{1}{2},\chi_D\right)=M+\mathcal{S}(V=\square)+\mathcal{S}(V\neq\square).
\end{equation}
Using Proposition 4.1, Proposition 5.1 and Proposition 6.1, we have 
\begin{align*}
    \sum_{D\in\mathcal{H}_{2g+2}}L\left(\frac{1}{2},\chi_D\right)&=M_{g,1}-M_{g,2}+M_{g-1,1}-M_{g-1,2}+\mathcal{S}_1(V=\square)+\mathcal{S}_2(V=\square)+\mathcal{S}_3(V=\square)+\mathcal{S}_4(V=\square)\\
    &+q^{\frac{2g+2}{3}}\mathcal{R}(2g+2)+q^{\frac{g}{6}+\left[\frac{g}{2}\right]}C_1+q^{\frac{g}{6}+\left[\frac{g-1}{2}\right]}C_2+O(q^{\frac{g}{2}(1+\epsilon)}).
\end{align*}
By Remark 4.3, $\mathcal{C}(u)$ has an analytic continuation for $|u|<q$ and $\mathcal{C}(1)=0$, therefore between the circles $|u|=r$ and $|u|=R$, the integrands corresponding to the terms $M_{g,1}, M_{g-1,1}, \mathcal{S}_1(V=\square)$ and $\mathcal{S}_2(V=\square)$ have a double pole at $u=q^{-1}$. Similarly the integrands corresponding to the terms $M_{g,2}, M_{g-1,2}, \mathcal{S}_3(V=\square)$ and $\mathcal{S}_4(V=\square)$ have a simple pole at $u=q^{-1}$. Computing the residue at $u=q^{-1}$, we get that
\begin{equation*}
   \mathcal{S}_1(V=\square)+M_{g,1}=\frac{q^{2g+2}}{\zeta_{\mathbb{A}}(2)}P(1)\left(\left[\frac{g}{2}\right]+1+\frac{1}{\log q}\frac{P'(1)}{P(1)}\right),
   \end{equation*}
\begin{equation*}
  \mathcal{S}_2(V=\square)+M_{g-1,1}=\frac{q^{2g+2}}{\zeta_{\mathbb{A}}(2)}P(1)\left(\left[\frac{g-1}{2}\right]+1+\frac{1}{\log q}\frac{P'(1)}{P(1)}\right),
\end{equation*}
\begin{equation*}
    \mathcal{S}_3(V=\square)-M_{g,2}=\frac{q^{\frac{3g+5}{2}+\left[\frac{g}{2}\right]}}{\zeta_{\mathbb{A}}(2)}\frac{P(1)}{(q-1)}
\end{equation*}
and
\begin{equation*}
   \mathcal{S}_4(V=\square)-M_{g-1,2}=\frac{q^{\frac{3g}{2}+3+\left[\frac{g-1}{2}\right]}}{\zeta_{\mathbb{A}}(2)}\frac{P(1)}{(q-1)}. 
\end{equation*}
where $\mathcal{C}(u)=P(s)$ with the change of variables $u=q^{-s}$. Putting everything together and using equation (\ref{eq:5.25}) the Theorem follows. 
\end{proof}
\begin{appendix}
\section{Completing the Proof of Lemma 5.6}
In the appendix we prove the claim that 
\begin{equation}\label{eq:A1}
     \mathcal{A}^o_{g-1,1,2}+\hat{\mathcal{A}}_2+\tilde{\mathcal{A}}_2-\mathcal{A}^e_{g,2,1}-\mathcal{A}^e_{g,2,2}-\mathcal{A}^e_{g-1,2,1}-\mathcal{A}^e_{g-1,2,2}-\mathcal{A}^o_{g,2}-\mathcal{A}^o_{g-1,2}
\end{equation}
equals zero. 
For the terms corresponding to the residues at $u=q^{-1}$ and $u=q^{-2}$ we have shown that (\ref{eq:A1}) equals zero, thus it remains to show that for the terms corresponding to the residue at $u=0$, (\ref{eq:A1}) equals zero. We prove this using induction on $g$.To do this we consider two cases, the first when $g$ is even and second when $g$ is odd.
\subsection{$g$ even}
Let $g=2m$ for $m\in\mathbb{Z}$, then we will show that $(\ref{eq:A1})$ equals zero for all $m\geq 1.$ For the base case, $m=1$, we have that (\ref{eq:A1}) is equal to 
\begin{align}
    \frac{1}{\zeta_{\mathbb{A}}(2)}\Bigg(&q^{\frac{9}{2}}\big(\mathcal{C}(0)+\mathcal{C}'(0)\big)+q^4\big(\mathcal{C}(0)(1+q)+\mathcal{C}'(0)\big)+q^{\frac{11}{2}}\mathcal{C}(0)+q^{\frac{13}{2}}\mathcal{C}(0)-q^{\frac{11}{2}}\mathcal{C}(0)\nonumber\\&+q^5\big(\mathcal{C}(0)(q+q^2)+\mathcal{C}'(0)\big)-q^4\big(\mathcal{C}(0)(1+q^2)+\mathcal{C}'(0)\big)-q^{\frac{9}{2}}\big(\mathcal{C}(0)(1+q^2)+\mathcal{C}'(0)\big)\nonumber\\&-q^{5}\big(\mathcal{C}(0)(1+q^2)+\mathcal{C}'(0)\big)\Bigg),
\end{align}
which when cancelling the terms equals zero, hence the base case is true. Assume that (\ref{eq:A1})$=0$, for $m=t$. Then we have that 
\begin{align}
    \frac{1}{\zeta_{\mathbb{A}}(2)}\Bigg(&q^{3t+\frac{3}{2}}\sum_{n=0}^t\frac{\mathcal{C}^{(n)}(0)}{n!}+q^{3t+1}\sum_{n=0}^t\frac{\mathcal{C}^{(n)}(0)}{n!}\sum_{k=0}^{t-n}q^k+q^{3t+\frac{5}{2}}\sum_{n=0}^{t-1}\frac{\mathcal{C}^{(n)}(0)}{n!}\sum_{k=0}^{t-1-n}q^k\nonumber\\&+q^{3t+\frac{7}{2}}\sum_{n=0}^{t-1}\frac{\mathcal{C}^{(n)}(0)}{n!}\sum_{k=t-1-n}^{2(t-1-n)}q^k-q^{3t+\frac{5}{2}}\sum_{n=0}^{t-1}\frac{\mathcal{C}^{(n)}(0)}{n!}\sum_{k=0}^{t-1-n}q^{2k}+q^{3t+2}\sum_{n=0}^t\frac{\mathcal{C}^{(n)}(0)}{n!}\sum_{k=t-n}^{2(t-n)}q^k\nonumber\\&-q^{3t+1}\sum_{n=0}^t\frac{\mathcal{C}^{(n)}(0)}{n!}\sum_{k=0}^{t-n}q^{2k}-q^{3t+\frac{3}{2}}\sum_{n=0}^t\frac{\mathcal{C}^{(n)}(0)}{n!}\sum_{k=0}^{t-n}q^{2k}-q^{3t+2}\sum_{n=0}^t\frac{\mathcal{C}^{(n)}(0)}{n!}\sum_{k=0}^{t-n}q^{2k}\Bigg)=0.
\end{align}
It remains to show that (\ref{eq:A1}) is equal to zero for $m=t+1$. For $m=t+1$, we have that (\ref{eq:A1}) is equal to 
\begin{align}\label{eq:A4}
    \frac{1}{\zeta_{\mathbb{A}}(2)}\Bigg(&q^{3t+\frac{9}{2}}\sum_{n=0}^{t+1}\frac{\mathcal{C}^{(n)}(0)}{n!}+q^{3t+4}\sum_{n=0}^{t+1}\frac{\mathcal{C}^{(n)}(0)}{n!}\sum_{k=0}^{t+1-n}q^k+q^{3t+\frac{11}{2}}\sum_{n=0}^t\frac{\mathcal{C}^{(n)}(0)}{n!}\sum_{k=0}^{t-n}q^k\nonumber\\
    &+q^{3t+\frac{13}{2}}\sum_{n=0}^t\frac{\mathcal{C}^{(n)}(0)}{n!}\sum_{k=t-n}^{2(t-n)}q^k-q^{3t+\frac{11}{2}}\sum_{n=0}^t\frac{\mathcal{C}^{(n)}(0)}{n!}\sum_{k=0}^{t-n}q^{2k}+q^{3t+5}\sum_{n=0}^{t+1}\frac{\mathcal{C}^{(n)}(0)}{n!}\sum_{k=t+1-n}^{2(t+1-n)}q^k\nonumber\\
    &-q^{3t+4}\sum_{n=0}^{t+1}\frac{\mathcal{C}^{(n)}(0)}{n!}\sum_{k=0}^{t+1-n}q^{2k}-q^{3t+\frac{9}{2}}\sum_{n=0}^{t+1}\frac{\mathcal{C}^{(n)}(0)}{n!}\sum_{k=0}^{t+1-n}q^{2k}-q^{3t+5}\sum_{n=0}^{t+1}\frac{\mathcal{C}^{(n)}(0)}{n!}\sum_{k=0}^{t+1-n}q^{2k}\Bigg).
\end{align}
Rearranging (\ref{eq:A4}), we have that (\ref{eq:A1}) is equal to 
\begin{align}\label{eq:A5}
    \frac{q^3}{\zeta_{\mathbb{A}}(2)}\Bigg(&q^{3t+\frac{3}{2}}\sum_{n=0}^t\frac{\mathcal{C}^{(n)}(0)}{n!}+q^{3t+1}\sum_{n=0}^t\frac{\mathcal{C}^{(n)}(0)}{n!}\sum_{k=0}^{t-n}q^k+q^{3t+\frac{5}{2}}\sum_{n=0}^{t-1}\frac{\mathcal{C}^{(n)}(0)}{n!}\sum_{k=0}^{t-1-n}q^k\nonumber\\&+q^{3t+\frac{7}{2}}\sum_{n=0}^{t-1}\frac{\mathcal{C}^{(n)}(0)}{n!}\sum_{k=t-1-n}^{2(t-1-n)}q^k-q^{3t+\frac{5}{2}}\sum_{n=0}^{t-1}\frac{\mathcal{C}^{(n)}(0)}{n!}\sum_{k=0}^{t-1-n}q^{2k}+q^{3t+2}\sum_{n=0}^t\frac{\mathcal{C}^{(n)}(0)}{n!}\sum_{k=t-n}^{2(t-n)}q^k\nonumber\\&-q^{3t+1}\sum_{n=0}^t\frac{\mathcal{C}^{(n)}(0)}{n!}\sum_{k=0}^{t-n}q^{2k}-q^{3t+\frac{3}{2}}\sum_{n=0}^t\frac{\mathcal{C}^{(n)}(0)}{n!}\sum_{k=0}^{t-n}q^{2k}-q^{3t+2}\sum_{n=0}^t\frac{\mathcal{C}^{(n)}(0)}{n!}\sum_{k=0}^{t-n}q^{2k}\Bigg)
    \end{align}
    \begin{align}
    +\frac{1}{\zeta_{\mathbb{A}}(2)}\Bigg(&q^{3t+\frac{9}{2}}\frac{\mathcal{C}^{(t+1)}(0)}{(t+1)!}+q^{3t+4}\sum_{n=0}^{t+1}\frac{\mathcal{C}^{(n)}(0)}{n!}q^{t+1-n}+q^{3t+\frac{11}{2}}\sum_{n=0}^t\frac{\mathcal{C}^{(n)}(0)}{n!}q^{t-n}-q^{3t+\frac{13}{2}}\sum_{n=0}^{t-1}\frac{\mathcal{C}^{(n)}(0)}{n!}q^{t-1-n}\nonumber\\&+q^{3t+\frac{13}{2}}\sum_{n=0}^{t-1}\frac{\mathcal{C}^{(n)}(0)}{n!}q^{2(t-n)-1}(1+q)+q^{3t+\frac{13}{2}}\frac{\mathcal{C}^{(t)}(0)}{t!}-q^{3t+\frac{11}{2}}\sum_{n=0}^t\frac{\mathcal{C}^{(n)}(0)}{n!}q^{2(t-n)}\nonumber\\
    &-q^{3t+5}\sum_{n=0}^t\frac{\mathcal{C}^{(n)}(0)}{n!}q^{t-n}+q^{3t+5}\sum_{n=0}^t\frac{\mathcal{C}^{(n)}(0)}{n!}q^{2(t-n)+1}(1+q)+q^{3t+5}\frac{\mathcal{C}^{(t+1)}(0)}{(t+1)!}\nonumber\\&-q^{3t+4}\sum_{n=0}^{t+1}\frac{\mathcal{C}^{(n)}(0)}{n!}q^{2(t+1-n)}-q^{3t+\frac{9}{2}}\sum_{n=0}^{t+1}\frac{\mathcal{C}^{(n)}(0)}{n!}q^{2(t+1-n)}-q^{3t+5}\sum_{n=0}^{t+1}\frac{\mathcal{C}^{(n)}(0)}{n!}q^{2(t+1-n)}\Bigg)\label{eq:A6}.
\end{align}
Using the inductive hypothesis, we have that (\ref{eq:A5}) equals zero, therefore it remains to show that (\ref{eq:A6}) equals zero. Rearranging (\ref{eq:A6}) we get that it is equal to
\begin{align*}
    \frac{1}{\zeta_{\mathbb{A}}(2)}\Bigg(&-q^{3t+\frac{13}{2}}\sum_{n=0}^t\frac{\mathcal{C}^{(n)}(0)}{n!}q^{2(t-n)}+q^{3t+4}\frac{\mathcal{C}^{(t+1)}(0)}{(t+1)!}+q^{3t+\frac{11}{2}}\frac{\mathcal{C}^{(t)}(0)}{t!}+q^{3t+\frac{11}{2}}\sum_{n=0}^{t-1}\frac{\mathcal{C}^{(n)}(0)}{n!}q^{2(t-n)}\\&+q^{3t+\frac{13}{2}}\sum_{n=0}^{t-1}\frac{\mathcal{C}^{(n)}(0)}{n!}q^{2(t-n)}+q^{3t+\frac{13}{2}}\frac{\mathcal{C}^{(t)}(0)}{t!}-q^{3t+\frac{11}{2}}\sum_{n=0}^t\frac{\mathcal{C}^{(n)}(0)}{n!}q^{2(t-n)}+q^{3t+6}\sum_{n=0}^t\frac{\mathcal{C}^{(n)}(0)}{n!}q^{2(t-n)}\\&+q^{3t+7}\sum_{n=0}^t\frac{\mathcal{C}^{(n)}(0)}{n!}q^{2(t-n)}-q^{3t+7}\sum_{n=0}^t\frac{\mathcal{C}^{(n)}(0)}{n!}q^{2(t-n)}-q^{3t+4}\sum_{n=0}^{t+1}\frac{\mathcal{C}^{(n)}(0)}{n!}q^{2(t+1-n)}\Bigg)\\
    =\frac{1}{\zeta_{\mathbb{A}}(2)}\Bigg(&q^{3t+\frac{11}{2}}(1+q^{\frac{1}{2}}+q+q^{\frac{3}{2}})\sum_{n=0}^t\frac{\mathcal{C}^{(n)}(0)}{n!}q^{2(t-n)}-q^{3t+\frac{11}{2}}(1+q^{\frac{1}{2}}+q+q^{\frac{3}{2}})\sum_{n=0}^t\frac{\mathcal{C}^{(n)}(0)}{n!}q^{2(t-n)}\Bigg)=0.
\end{align*}
Thus (\ref{eq:A1})$=0$ for $m=t+1$ and so, by induction, (\ref{eq:A1})$=0$ for all $g\geq 1, g$ even.
\subsection{$g$ odd}
Now let $g=2m+1$, then we want to show, using induction on $m$ that (\ref{eq:A1}) equals zero for all $m\geq 0.$ For the base case, $m=0$, we have that (\ref{eq:A1}) is equal to
\begin{align*}
    \frac{1}{\zeta_{\mathbb{A}}(2)}\Bigg(&q^{\frac{5}{2}}\big(\mathcal{C}(0)+\mathcal{C}'(0)\big)+q^{\frac{7}{2}}\mathcal{C}(0)+q^3\mathcal{C}(0)+q^4\mathcal{C}(0)-q^3\mathcal{C}(0)+q^\frac{9}{2}\mathcal{C}(0)-q^{\frac{7}{2}}\mathcal{C}(0)\\&-q^4\mathcal{C}(0)-q^{\frac{5}{2}}\big(\mathcal{C}(0)(1+q^2)+\mathcal{C}'(0)\big)\Bigg),
\end{align*}
which, when cancelling the terms equals zero, hence the base case is true. Assume that (\ref{eq:A1})$=0$ for $m=t$. Then we have that
\begin{align}
    \frac{1}{\zeta_{\mathbb{A}}(2)}\Bigg(&q^{3t+\frac{5}{2}}\sum_{n=0}^{t+1}\frac{\mathcal{C}^{(n)}(0)}{n!}+q^{3t+\frac{7}{2}}\sum_{n=0}^t\frac{\mathcal{C}^{(n)}(0)}{n!}\sum_{k=0}^{t-n}q^k+q^{3t+3}\sum_{n=0}^t\frac{\mathcal{C}^{(n)}(0)}{n!}\sum_{k=0}^{t-n}q^k\nonumber\\
    &+q^{3t+4}\sum_{n=0}^t\frac{\mathcal{C}^{(n)}(0)}{n!}\sum_{k=t-n}^{2(t-n)}q^k-q^{3t+3}\sum_{n=0}^t\frac{\mathcal{C}^{(n)}(0)}{n!}\sum_{k=0}^{t-n}q^{2k}+q^{3t+\frac{9}{2}}\sum_{n=0}^t\frac{\mathcal{C}^{(n)}(0)}{n!}\sum_{k=t-n}^{2(t-n)}q^k\nonumber\\&-q^{3t+\frac{7}{2}}\sum_{n=0}^t\frac{\mathcal{C}^{(n)}(0)}{n!}\sum_{k=0}^{t-n}q^{2k}-q^{3t+4}\sum_{n=0}^t\frac{\mathcal{C}^{(n)}(0)}{n!}\sum_{k=0}^{t-n}q^{2k}-q^{3t+\frac{5}{2}}\sum_{n=0}^{t+1}\frac{\mathcal{C}^{(n)}(0)}{n!}\sum_{k=0}^{t+1-n}q^{2k}\Bigg)=0.
\end{align}
It remains to show that (\ref{eq:A1}) is equal to zero for $m=t+1$. For $m=t+1$, we have that (\ref{eq:A1}) is equal to 
\begin{align}\label{eq:A8}
    \frac{1}{\zeta_{\mathbb{A}}(2)}\Bigg(&q^{3t+\frac{11}{2}}\sum_{n=0}^{t+2}\frac{\mathcal{C}^{(n)}(0)}{n!}+q^{3t+\frac{13}{2}}\sum_{n=0}^{t+1}\frac{\mathcal{C}^{(n)}(0)}{n!}\sum_{k=0}^{t+1-n}q^k+q^{3t+6}\sum_{n=0}^{t+1}\frac{\mathcal{C}^{(n)}(0)}{n!}\sum_{k=0}^{t+1-n}q^k\nonumber\\
    &+q^{3t+7}\sum_{n=0}^{t+1}\frac{\mathcal{C}^{(n)}(0)}{n!}\sum_{k=t+1-n}^{2(t+1-n)}q^k-q^{3t+6}\sum_{n=0}^{t+1}\frac{\mathcal{C}^{(n)}(0)}{n!}\sum_{k=0}^{t+1-n}q^{2k}+q^{3t+\frac{15}{2}}\sum_{n=0}^{t+1}\frac{\mathcal{C}^{(n)}(0)}{n!}\sum_{k=t+1-n}^{2(t+1-n)}q^k\nonumber\\
    &-q^{3t+\frac{13}{2}}\sum_{n=0}^{t+1}\frac{\mathcal{C}^{(n)}(0)}{n!}\sum_{k=0}^{t+1-n}q^{2k}-q^{3t+7}\sum_{n=0}^{t+1}\frac{\mathcal{C}^{(n)}(0)}{n!}\sum_{k=0}^{t+1-n}q^{2k}-q^{3t+\frac{11}{2}}\sum_{n=0}^{t+2}\frac{\mathcal{C}^{(n)}(0)}{n!}\sum_{k=0}^{t+2-n}q^{2k}\Bigg).
\end{align}
Rearranging (\ref{eq:A8}), we have that (\ref{eq:A1}) is equal to 
\begin{align}
    \frac{q^3}{\zeta_{\mathbb{A}}(2)}\Bigg( &q^{3t+\frac{5}{2}}\sum_{n=0}^{t+1}\frac{\mathcal{C}^{(n)}(0)}{n!}+q^{3t+\frac{7}{2}}\sum_{n=0}^t\frac{\mathcal{C}^{(n)}(0)}{n!}\sum_{k=0}^{t-n}q^k+q^{3t+3}\sum_{n=0}^t\frac{\mathcal{C}^{(n)}(0)}{n!}\sum_{k=0}^{t-n}q^k\nonumber\\
    &+q^{3t+4}\sum_{n=0}^t\frac{\mathcal{C}^{(n)}(0)}{n!}\sum_{k=t-n}^{2(t-n)}q^k-q^{3t+3}\sum_{n=0}^t\frac{\mathcal{C}^{(n)}(0)}{n!}\sum_{k=0}^{t-n}q^{2k}+q^{3t+\frac{9}{2}}\sum_{n=0}^t\frac{\mathcal{C}^{(n)}(0)}{n!}\sum_{k=t-n}^{2(t-n)}q^k\nonumber\\&-q^{3t+\frac{7}{2}}\sum_{n=0}^t\frac{\mathcal{C}^{(n)}(0)}{n!}\sum_{k=0}^{t-n}q^{2k}-q^{3t+4}\sum_{n=0}^t\frac{\mathcal{C}^{(n)}(0)}{n!}\sum_{k=0}^{t-n}q^{2k}-q^{3t+\frac{5}{2}}\sum_{n=0}^{t+1}\frac{\mathcal{C}^{(n)}(0)}{n!}\sum_{k=0}^{t-n}q^{2k}\Bigg)\label{eq:A9}\\
   +\frac{1}{\zeta_{\mathbb{A}}(2)}\Bigg(&q^{3t+\frac{11}{2}}\frac{\mathcal{C}^{(t+2)}(0)}{(t+2)!}+q^{3t+\frac{13}{2}}\sum_{n=0}^{t+1}\frac{\mathcal{C}^{(n)}(0)}{n!}q^{t+1-n}+q^{3t+6}\sum_{n=0}^{t+1}\frac{\mathcal{C}^{(n)}(0)}{n!}q^{t+1-n}-q^{3t+7}\sum_{n=0}^t\frac{\mathcal{C}^{(n)}(0)}{n!}q^{t-n}\nonumber\\
   &+q^{3t+7}\sum_{n=0}^t\frac{\mathcal{C}^{(n)}(0)}{n!}q^{2(t+1-n)-1}(1+q)+q^{3t+7}\frac{\mathcal{C}^{(t+1)}(0)}{(t+1)!}-q^{3t+6}\sum_{n=0}^{t+1}\frac{\mathcal{C}^{(n)}(0)}{n!}q^{2(t+1-n)}\nonumber\\
   &-q^{3t+\frac{15}{2}}\sum_{n=0}^t\frac{\mathcal{C}^{(n)}(0)}{n!}q^{t-n}+q^{3t+\frac{15}{2}}\sum_{n=0}^t\frac{\mathcal{C}^{(n)}(0)}{n!}q^{2(t+1-n)-1}(1+q)+q^{3t+\frac{15}{2}}\frac{\mathcal{C}^{(t+1)}(0)}{(t+1)!}\nonumber\\
   &-q^{3t+\frac{13}{2}}\sum_{n=0}^{t+1}\frac{\mathcal{C}^{(n)}(0)}{n!}q^{2(t+1-n)}-q^{3t+7}\sum_{n=0}^{t+1}\frac{\mathcal{C}^{(n)}(0)}{n!}q^{2(t+1-n)}-q^{3t+\frac{11}{2}}\sum_{n=0}^{t+2}\frac{\mathcal{C}^{(n)}(0)}{n!}q^{2(t+2-n)}\Bigg).\label{eq:A10}
\end{align}
Using the inductive hypothesis, we have (\ref{eq:A9}) equals zero. Thus it remains to show that (\ref{eq:A10}) equals zero. Rearranging (\ref{eq:A10}), we have that it equals
\begin{align*}
    \frac{1}{\zeta_{\mathbb{A}}(2)}\Bigg(&-q^{3t+\frac{15}{2}}\sum_{n=0}^{t+1}\frac{\mathcal{C}^{(n)}(0)}{n!}q^{2(t+1-n)}+q^{3t+\frac{13}{2}}\frac{\mathcal{C}^{(t+1)}(0)}{(t+1)!}+q^{3t+6}\frac{\mathcal{C}^{(t+1)}(0)}{(t+1)!}+q^{3t+6}\sum_{n=0}^t\frac{\mathcal{C}^{(n)}(0)}{n!}q^{2(t+1-n)}\\
    &+q^{3t+7}\sum_{n=0}^t\frac{\mathcal{C}^{(n)}(0)}{n!}q^{2(t+1-n)}+q^{3t+7}\frac{\mathcal{C}^{(t+1)}(0)}{(t+1)!}-q^{3t+6}\sum_{n=0}^{t+1}\frac{\mathcal{C}^{(n)}(0)}{n!}q^{2(t+1-n)}\\
    &+q^{3t+\frac{13}{2}}\sum_{n=0}^t\frac{\mathcal{C}^{(n)}(0)}{n!}q^{2(t+1-n)}+q^{3t+\frac{15}{2}}\sum_{n=0}^t\frac{\mathcal{C}^{(n)}(0)}{n!}q^{2(t+1-n)}+q^{3t+\frac{15}{2}}\frac{\mathcal{C}^{(t+1)}(0)}{(t+1)!}\\
    &-q^{3t+\frac{13}{2}}\sum_{n=0}^{t+1}\frac{\mathcal{C}^{(n)}(0)}{n!}q^{2(t+1-n)}-q^{3t+7}\sum_{n=0}^{t+1}\frac{\mathcal{C}^{(n)}(0)}{n!}q^{2(t+1-n)}\Bigg)\\
    =\frac{1}{\zeta_{\mathbb{A}}(2)}\Bigg(&q^{3t+6}(1+q^{\frac{1}{2}}+q+q^{\frac{3}{2}})\sum_{n=0}^{t+1}\frac{\mathcal{C}^{(n)}(0)}{n!}q^{2(t+1-n)}-q^{3t+6}(1+q^{\frac{1}{2}}+q+q^{\frac{3}{2}})\sum_{n=0}^{t+1}\frac{\mathcal{C}^{(n)}(0)}{n!}q^{2(t+1-n)}\bigg)=0.
\end{align*}
Thus (\ref{eq:A1})$=0$ for $m=t+1$ and so by induction (\ref{eq:A1})$=0$ for all $g\geq 1, g$ odd. This completes the proof of Lemma 5.6.\\
\par\noindent
\textbf{Acknowledgement:} The first author is grateful to the Leverhulme Trust (RPG-2017-320) for the support through the research project grant "Moments of L-functions in Function Fields and Random Matrix Theory". The second author is also grateful to the Leverhulme Trust (RPG-2017-320) for the support given during this research through a PhD studentship.
\end{appendix}
\bibliographystyle{plain}
\bibliography{realquadfuncfield}
Department of Mathematics, University of Exeter, Exeter, EX4 4QF, UK\\
\textit{E-mail Address:}j.c.andrade@exeter.ac.uk\\
\textit{E-mail Address:}jm1015@exeter.ac.uk
\end{document}